\providecommand\@dotsep{5}
\newtheorem{defn}[equation]{Definition}
\newtheorem{thm}[equation]{Theorem}
\newtheorem{cor}[equation]{Corollary}
\newtheorem{lem}[equation]{Lemma}
\newtheorem{prop}[equation]{Proposition}
\theoremstyle{remark}
\newtheorem{rem}[equation]{Remark}
\theoremstyle{remark}
\numberwithin{equation}{section} 
\renewcommand\hom{\operatorname{Hom}}
\newcommand\Hco{\operatorname{H}}
\newcommand\Pic{\operatorname{Pic}}
\newcommand\supp{\operatorname{supp}}
\newcommand\Div{\operatorname{Div}}
\newcommand\im{\operatorname{Im}}
\renewcommand\deg{\operatorname{deg}}
\renewcommand\emptyset{\varnothing}
\newcommand\perpdouble{\protect\mathpalette{\protect\perpDouble}{\perp}}
\def\perpDouble#1#2{\mathrel{\rlap{$#1#2$}\mkern3mu{#1#2}}}
\newcommand\pperp{\perpdouble}
\DeclarePairedDelimiterXPP\globalsymbol[1]{}{\langle}{\rangle}{}{\ifblank{#1}{\cdot,\cdot}{#1}}
\newcommand{\normSymbol}{N_{k(x)/k}}
\DeclarePairedDelimiterXPP\norm[1]{\normSymbol}{\lparen}{\rparen}{}{\ifblank{#1}{\cdot}{#1}}
\DeclarePairedDelimiterXPP\localsymbol[1]{}{\lparen}{\rparen}{}{\ifblank{#1}{\cdot,\cdot}{#1}}
\DeclarePairedDelimiterXPP\globalCommutator[1]{}{\lbrace}{\rbrace}{\raisebox{0pt}[1ex][1ex]{$^{\mathbb{A}_{X}}_{\mathbb{A}^{+}_{X}}$}}{\ifblank{#1}{\cdot,\cdot}{#1}}
\DeclarePairedDelimiterXPP\localCommutator[1]{}{\lbrace}{\rbrace}{^{K_{x}}_{\widehat{\O}_{X,x}}}{\ifblank{#1}{\cdot,\cdot}{#1}}
\renewcommand{\O}{{\mathcal{O}}}
\newcommand{\A}{{\mathbb{A}}}
\newcommand{\I}{{\mathbb{I}}}
\newcommand{\C}{{\mathbb{C}}}
\newcommand{\Z}{{\mathbb{Z}}}
\newcommand{\E}{\mathcal{E}}
\newcommand{\m}{\widehat{\mathfrak{m}}}
\renewcommand\tilde{\widetilde}
\title[An idelic quotient related to Weil reciprocity and the Picard group]{An idelic quotient related to Weil reciprocity and the Picard group}
\author[J.~M.~Mu\~noz Porras]{José María Mu\~noz Porras}
\email{jmp@usal.es}
\author[L.~M.~Navas]{Luis Manuel Navas Vicente}
\email{navas@usal.es}
\author[F.~Pablos~Romo]{Fernando Pablos Romo}
\email{fpablos@usal.es}
\author[F.~J.~PLaza~Mart\'in]{Francisco J. Plaza Mart\'in}
\email{fplaza@usal.es}
\thanks{Research of the first, third and fourth authors supported by grant MTM2015-66760-P of MINECO and SA030G18 of JCyL. Research of the second author supported by grant MTM2015-65888-C4-4-P (MINECO/FEDER)}
\address{Departamento de Matem\'aticas and IUFFYM, Universidad de
Salamanca,  Plaza de la Merced 1-4
        \\
        37008 Salamanca. Spain.
        \\
         Tel: +34 923294460. Fax: +34 923294583
}
\subjclass[2010]{14H05 (Primary) 19F15, 11R37 (Secondary)}
\keywords{Weil reciprocity law, class field theory, algebraic curves, function fields.}
\begin{document}

\maketitle

\begin{abstract}
This paper studies the function field of an algebraic curve over an arbitrary perfect field by using the Weil reciprocity law and topologies on the adele ring. A topological subgroup of the idele class group is introduced and it is shown how it encodes arithmetic properties of the base field and of the Picard group of the curve. These results are applied to study extensions of the function field.
\\
\center{\small{\textsc \it Dedicated to the memory of José María Muñoz Porras}
\\
}
\end{abstract}

\section{Introduction}
\label{sec:introduction}

The study of extensions of a given field is a classical problem in mathematics. Within algebraic number theory, class field theory is concerned with the classification of abelian extensions of local and global fields (\cite[Chap. XV, Tate's Thesis]{CaFr}, \cite{Weil-corps}).  

This paper studies function fields of algebraic curves over an arbitrary perfect field by mimicking the approach of class field theory. To be more precise, recall that, in the case of number fields, Tate proved the existence of a \textsl{duality} in the following sense: if $\Sigma$ is a number field and $\A_\Sigma$ is its adele ring, the character group of $\A_\Sigma/\Sigma$ is isomorphic to $\Sigma$. Here, we explore the multiplicative analog of this result in the case of function fields of an algebraic curve $X$. For this purpose, we consider the multiplicative group of the function field of the curve, $\Sigma^*_X$, as a subgroup of the idele group $\I^1_{X}$, and the pairing used to establish the duality is  the local multiplicative symbol (\cite{MP, Se}). In this way we are naturally led to consider the ``orthogonal,'' denoted by $(\Sigma_{X}^*)^{\pperp}$. Note that the inclusion $\Sigma_{X}^*\subseteq (\Sigma_{X}^*)^{\pperp}$ is equivalent to the Weil Reciprocity Law. 

We study the topology of the quotient group $(\Sigma_{X}^*)^{\pperp}/\Sigma_{X}^*$, show how it encodes arithmetical properties of the base field (see, for instance, Proposition~\ref{p:IX1/SigmaProfinite}), and describe it  in terms of the Picard group of $X$.  Among other results, the dependence of $(\Sigma_{X}^*)^{\pperp}/\Sigma_{X}^*$ with respect to the curve is best exhibited by our Theorem~\ref{thm:EXACTSEQSigma}, which shows the exactness of the sequence
\[
	0 \to \hom_{\text{gr}}\big(\Pic^0(X),k^{*}\big)  \to  
(\Sigma_{X}^{*})^{\pperp}/\Sigma_{X}^{*}  \to  
\Pic^0(X)
\]
for algebraically closed $k$ (and an analog for finite fields). In particular, it follows that $(\Sigma_{X}^*)^{\pperp}/\Sigma_{X}^*=0$ for $X={\mathbb{P}}_1$ over an algebraically closed field, although it may not vanish for other curves or other base fields. 

Finally, we apply our results to study field extensions $\Sigma_{X}\subset \Omega$ (see Theorems~\ref{T:equivalences} and~\ref{T:SigmaAlg}). In the near future, we aim at applying these techniques to characterize algebraic extensions and therefore to connect our approach with the geometric Langlands program. We also think that the study of algebraic extensions would benefit from the reinterpretation of our results in terms of $K$-theory, following the ideas of \cite{BryDeligne,TaSym} for the case of local and global fields.

\section{The adelic and idelic topologies}
\label{sec:topology}

This section aims to generalize some basic properties of adeles and ideles to the case of the function field of an algebraic curve over a perfect field, with special emphasis on algebraically closed base fields. Our approach follows Cassels' for global fields (\cite[Chp. II]{CaFr}) as well as other classical references (\cite{Se,WeilBas}). In the arithmetic framework of class field theory, the notion of global field comprises both finite extensions of ${\mathbb Q}$ as well as finite separable extensions of $\mathbb{F}(t)$, where $\mathbb{F}$ is a finite field and $t$ is trascendental over $\mathbb{F}$.

Somewhat surprisingly, there seems to be no comprehensive account of the adelic and idelic topologies in this more geometric context. Since the case of a global field over a finite base field is essentially disjoint from our main interests, we have felt the need to provide this initial discussion.

After recalling the basic definitions, we will establish the fundamental properties of the topologies on the ring of adeles and group of ideles associated to the field of rational functions of a smooth curve. The most well-known application of these adeles is probably Weil's proof of the Riemann-Roch Theorem, although Weil and many authors use \emph{repartitions}, sometimes also called \emph{pre-adeles}, which are defined using the local rings directly and not their completions.

\subsection{Definitions}
\label{sec:definitions}

Let $X$ be a smooth, complete and connected curve over a perfect field $k$. For simplicity, we will assume $k$ is algebraically closed in the function field of $X$ and also that $X$ has a $k$-rational point. We will use the following notation throughout:
\begin{itemize}

\item $\Sigma_{X}$ is the function field of $X$.

\item The notation $x \in X$ will always denote a \emph{closed} point of the curve $X$.

\item For $x \in X$, $\O_{X,x}$ is the valuation ring at $x$.

\item $\widehat{\O}_{X,x}$ is the completion of $\O_{X,x}$ with respect to its maximal ideal $\mathfrak{m}_{x}$.

\item $\m_{x}$ denotes the maximal ideal of the completion $\widehat{\O}_{X,x}$.

\item $K_{x}$ is the quotient field of $\widehat{\O}_{X,x}$.

\item For $x \in X$, let $k(x) = \widehat{\O}_{X,x}/\m_{x}$ be the residue field of $x$. Since $k$ is perfect, $k(x)/k$ is a finite separable extension. We let $\deg(x) = \text {dim}_{k} k(x)$.

\item For any ring $R$, its group of invertible elements will be denoted by $R^{*}$.

\end{itemize}

Recall that a perfect ground field $k$ implies that the closed points $x \in X$ are in one-to-one correspondence with the discrete valuations $v_{x}$ of the function field $\Sigma_{X}$, and $K_{x}$ is the completion of $\Sigma_{X}$ with respect to $v_{x}$. A choice of uniformizing parameter $t_{x}$ at $x$ determines an isomorphism of $\widehat{\O}_{X,x}$ and $K_{x}$ respectively with the power series ring and the field of Laurent series over the residue field, namely, $\widehat{\O}_{X,x} \simeq k(x)[[t_{x}]]$ and $K_{x} \simeq k(x)((t_{x}))$. In particular, $\widehat{\O}_{X,x}^{*} \simeq k(x)^{*} \times (1 + \m_{x})$. On occasion it will be convenient to use the same notation for an element of $k(x)^{*}$ and a lift to $\widehat{\O}_{X,x}^{*}$.

Recall that the ring of \emph{adeles} $\A_{X}$ of $\Sigma_{X}/k$ is the restricted direct product
\[
	   \A_{X} 
	:= \underset{x \text{ closed} } {\underset {x\in X} \prod'} \!\!\! K_{x}
	 = \left\{ (\alpha_{x})_{x\in X} \:\vert\: \alpha_{x}\in \widehat{\O}_{X,x}  \mbox{ for almost all } x\in X \right\}
\]
(where ``almost all'' means ``for all but finitely many''). For an adele $\alpha = (\alpha_{x})_{x\in X}$ and a (closed) point $x \in X$, $\alpha(x)$ will denote the image of its $x$-component $\alpha_{x}$ in the residue field $k(x)$.

We let $\A_{X}^+$ denote the subring of $\A_{X}$ given by the usual direct product
\[
	\A_{X}^+=\underset{x \text{ closed} } {\underset {x\in X} \prod}{\widehat
\O}_{X,x},
\]
i.e. where the word ``almost'' is dropped in the previous definition.
Finally, the \emph{idele group} $\I_{X}$ is the group $\A_{X}^{*}$ of invertible elements of $\A_{X}$. It is the restricted product of $K_{x}^{*}$ with respect to the unit groups $\widehat{\O}_{X,x}^{*}$.

\subsection{Topologies}

Given a divisor $D=\sum_{x\in X}n_{x}x$ on $X$, where $v_{x}$ is the valuation at $x \in X$ and $x \in X$ means $x$ runs over the closed points of $X$, the collection consisting of the $k$-vector subspaces
\[ 
	   U_{D}
	:= \big\{\alpha \in \A_{X}  \:\vert\:  v_{x}(\alpha_{x})\geq n_{x}  \ \forall x\in X \big\}
     = \!\!\!\! \prod_{x\in\supp  D} \!\!\! \m_{x}^{n_{x}}
	   \times
	   \!\!\!\! \prod_{x\notin\supp  D} \!\!\! \widehat{\O}_{X,x}
\] 
is a neighborhood base at the origin. We will refer to  the corresponding topology on $\A_{X}$ simply as \emph{the $\A_{X}$-topology}. It is easy to check that $\A_{X}$ becomes a topological ring when endowed with the $\A_{X}$-topology. The following properties are immediate:
\begin{enumerate}
	
\item $\A_{X}^{+}=U_{0}$.

\item $\A_{X} = \bigcup_{D} U_{D}$ and $(0) = \bigcap_{D} U_{D}$ where $D$ runs over $\Div(X)$.

\item If $D\geq D'$,  then $U_{D}\subseteq U_{D'}$ and $\dim_{k}(U_{D'}/U_{D})< \infty$.

\item Since $U_{D}$ is an open subgroup, it is also closed, thus clopen.

\end{enumerate}

\begin{prop}
\label{p:equivtops}
The  $\A_{X}$-topology coincides with the topology generated by the set of vector subspaces $C$ commensurable with $\A_{X}^+$, i.e., $\dim_{k}(C+\A_{X}^{+})/(C\cap \A_{X}^{+})<\infty$, as a neighborhood base of zero.
\end{prop}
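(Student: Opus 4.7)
The plan is to check the two inclusions of topologies. Since each basis consists of $k$-subspaces (hence subgroups of $\A_X$), it suffices to prove (a) every $U_D$ is commensurable with $\A_X^+$, and (b) every commensurable subspace $C$ contains some $U_D$.

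For (a), I would decompose $D=D_{+}-D_{-}$ with $D_{\pm}\ge 0$ of disjoint support. A direct componentwise check at each closed point $x$, using that intersecting $v_x(\alpha_x)\ge n_x$ with $v_x(\alpha_x)\ge 0$ forces $v_x(\alpha_x)\ge\max(n_x,0)$, and the dual splitting of an element of $U_{-D_{-}}$ coordinatewise as a sum of pieces in $U_D$ and in $\A_X^+$, yields
\[
U_D\cap\A_X^+=U_{D_{+}}\quad\text{and}\quad U_D+\A_X^+=U_{-D_{-}}.
\]
Since $-D_{-}\le D_{+}$, property~(3) immediately gives $\dim_k(U_{-D_{-}}/U_{D_{+}})<\infty$, so $U_D$ is commensurable with $\A_X^+$ and lies in the commensurable basis.

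For (b), let $C$ be a commensurable subspace and set $W=C\cap\A_X^+$; the commensurability condition forces $\dim_k\A_X^+/W<\infty$. The heart of the argument is to produce an effective divisor $D$ with $U_D\subseteq W$, equivalently to show that the surjection $\A_X^+\twoheadrightarrow\A_X^+/W$ factors through some $\A_X^+/U_E$ (which is finite-dimensional by property~(3)). The natural tool is the inverse-limit presentation $\A_X^+=\varprojlim_E \A_X^+/U_E$, which follows from properties~(2) and~(3); it makes $\A_X^+$ linearly compact as a $k$-vector space, so any Hausdorff finite-dimensional linearly topologized quotient must be discrete. Consequently $W$ is open in $\A_X^+$, hence contains some $U_D$, giving $U_D\subseteq C$.

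The main obstacle is this linear-compactness/discreteness step in (b), where one converts ``finite codimension'' into ``contains some $U_D$''. Once it is in hand, the two directions combine to show that the neighborhood bases of $0$ are mutually cofinal, so the two topologies on $\A_X$ coincide.
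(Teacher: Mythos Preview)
Your treatment of (a) is fine and simply fleshes out what the paper calls ``easy to check''. The gap is in (b). Your linear-compactness step says: $\A_X^+$ is linearly compact, so any Hausdorff finite-dimensional quotient is discrete, hence $W=C\cap\A_X^+$ is open. But the quotient $\A_X^+/W$, with the quotient topology, is Hausdorff \emph{if and only if $W$ is closed} in $\A_X^+$, and nothing in the bare commensurability hypothesis guarantees this. Concretely: fix a closed point $x_0$ and choose a $k$-hyperplane $H\subset\widehat{\O}_{X,x_0}$ that is \emph{not} closed in the $\m_{x_0}$-adic topology (such $H$ exist, since $\widehat{\O}_{X,x_0}$ is infinite-dimensional over $k$ and hence admits discontinuous linear functionals). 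Then
\[
C \;:=\; H\times\prod_{x\neq x_0}\widehat{\O}_{X,x}\;\subset\;\A_X^+
\]
satisfies $C+\A_X^+=\A_X^+$ and $\dim_k(\A_X^+/C)=1$, so $C$ is commensurable with $\A_X^+$; yet $C$ contains no $U_D$, because any $U_D\subset\A_X^+$ projects onto some $\m_{x_0}^{\,n}$ at $x_0$, and $H$ contains no such power (otherwise $H$ would be closed).

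This is not a flaw specific to your route: the paper's own one-line argument (``since $(0)=\bigcap_D U_D$, for some divisor $D$ we have $U_D\subset C$'') makes exactly the same leap, and the example above defeats it in the same way. As literally stated, the proposition needs an extra hypothesis --- e.g.\ that $C$ be closed (equivalently open) in the $\A_X$-topology, which is how commensurable ``lattices'' are usually taken in the Tate-space literature the paper cites. Under that amendment your linear-compactness argument goes through cleanly: $W$ is then closed of finite codimension, so $\A_X^+/W$ is finite-dimensional Hausdorff linearly topologized, hence discrete, hence $W$ is open and contains some $U_D$. Without it, neither proof can succeed.
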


\begin{proof}
Given $D\in \Div(X)$, it is easy to check that $U_{D}$ is commensurable with $\A_{X}^{+} = U_{0}$, i.e., $\dim_{k}(U_{D}+U_{0})/(U_{D}\cap U_{0})<\infty$, and hence $U_{D}$ is a neighborhood of $0$ in the $\A_{X}^{+}$-commensurable subspace topology.

Conversely, if $C$ is a subspace with $\dim_{k}(C+\A_{X}^{+})/(C\cap \A_{X}^{+})<\infty$ then, since $(0) = \bigcap_{D} U_{D}$, for some divisor $D\in \Div(X)$ we have $U_{D}\subset C$, thus $C$ is a neighborhood of $0$ in the $\A_{X}$-topology.
\end{proof}

For more details regarding commensurability, see for example~\cite{ACK}.

\begin{prop}
\label{P:discreteClosed}
Let $V\subseteq \A_{X}$ be a $k$-vector space. Then, $V$ has the discrete topology if and only if  $\dim_k (V\cap \A_{X}^+)<\infty$. If this is the case, then $V$ is closed. 
\end{prop}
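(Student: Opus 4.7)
The plan is to exploit the fact that the $\A_{X}$-topology has the neighborhood base $\{U_{D}\}_{D\in\Div(X)}$ with properties (1)--(4), in particular that effective divisors are cofinal (so $U_{D}\subseteq \A_{X}^{+}$ for $D\geq 0$) and that the quotients $U_{0}/U_{D}$ are finite dimensional over $k$. The statement splits naturally into the two implications plus the closedness claim.

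For the direction $V$ discrete $\Rightarrow \dim_{k}(V\cap \A_{X}^{+})<\infty$, I would use that discreteness gives a divisor $D$ with $V\cap U_{D}=(0)$, and by replacing $D$ with $\max(D,0)$ componentwise I may assume $D\geq 0$, so $U_{D}\subseteq \A_{X}^{+}$. Then the composition $V\cap \A_{X}^{+}\hookrightarrow \A_{X}^{+}\twoheadrightarrow \A_{X}^{+}/U_{D}=U_{0}/U_{D}$ has kernel $V\cap U_{D}=(0)$, so it is injective, and property (3) bounds $\dim_{k}(V\cap \A_{X}^{+})$ by $\dim_{k}(U_{0}/U_{D})<\infty$.

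For the converse, I would consider the family of subspaces $W_{D}:=V\cap \A_{X}^{+}\cap U_{D}$ of $V\cap \A_{X}^{+}$, indexed by effective divisors $D\geq 0$. This family is downward directed, since $U_{D_{1}}\cap U_{D_{2}}=U_{\max(D_{1},D_{2})}$, and has intersection $(0)$ by property (2) (effective divisors being cofinal in $\Div(X)$). Inside the finite-dimensional space $V\cap \A_{X}^{+}$ any downward-directed family of subspaces with trivial intersection must actually contain $(0)$: if some $W_{D}\neq(0)$, pick a nonzero $v\in W_{D}$ and an effective divisor $D'$ with $v\notin U_{D'}$, then $W_{\max(D,D')}\subsetneq W_{D}$, and the process terminates by finite dimensionality. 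Thus some $W_{D_{0}}=(0)$, and since $U_{D_{0}}\subseteq \A_{X}^{+}$ this yields $V\cap U_{D_{0}}=(0)$, so $V$ is discrete.

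Finally, for closedness, property (2) makes $\A_{X}$ Hausdorff, and the claim reduces to the standard fact that a discrete subgroup of a Hausdorff topological group is closed. Concretely, given $g\in\bar{V}$ and a divisor $D$ with $V\cap U_{D}=(0)$, any two elements of $(g+U_{D})\cap V$ differ by an element of $V\cap U_{D}=(0)$, so this intersection has at most one element; since $g+U_{D}$ is an open neighborhood of $g$ and $g\in\bar{V}$, it is in fact a singleton $\{h_{0}\}$. For every $D'\geq D$ the set $(g+U_{D'})\cap V$ is nonempty and contained in $(g+U_{D})\cap V=\{h_{0}\}$, so $h_{0}-g\in U_{D'}$ for all such $D'$, and property (2) then forces $g=h_{0}\in V$. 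The only point requiring care is the finite-dimensional stabilization lemma used in the converse direction; everything else is a routine unwinding of the $\A_{X}$-topology.
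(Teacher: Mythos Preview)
Your proof is correct and follows essentially the same approach as the paper: both reduce discreteness to finding an effective divisor $D$ with $V\cap U_{D}=(0)$ and use finite-dimensionality of $U_{0}/U_{D}$ for the equivalence, and both prove closedness by showing that approximants to a point $g\in\bar V$ from $V$ are unique in $g+U_{D}$ and hence must equal $g$. Your closedness argument is slightly cleaner, phrased directly (and correctly invoking the general fact that discrete subgroups of Hausdorff groups are closed), whereas the paper argues by contradiction, but the underlying idea is the same.
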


\begin{proof}
$V$	is Hausdorff since $\{0\} = \bigcap_{D} U_{D}$. Hence, $V$ has the discrete topology if and only if $\{0\}$ is open, i.e. there exists $D>0$ such that $V\cap U_{D}=\{0\}$. Since $\dim_k \A_{X}^+/U_{D}<\infty$, the latter fact is equivalent to $\dim_k (V\cap \A_{X}^+)<\infty$. 

For the second part, note that the closure $\bar V$ is given by
\[
	\bar V = \bigcap_{D>0} (V+U_{D}).
\]
Assume that there is some element $w\in\bar V\setminus V$. Then for all $D$ there exists $v_{D}\in V$ with $w-v_{D}\in U_{D}$.  Discreteness implies that $V\cap U_{D_{0}}=\{0\}$ for some $D_{0}$, and thus $v_{D}$ is unique provided that $D\geq D_{0}$.  We claim that given $D>D_{0}$ there is some $D'>D$ such that $v_{D'}\neq 0$. Indeed, if this were not the case, then $w\in  \bigcap_{D'>D} U_{D'}=\{0\}$, which is a contradiction. Hence, we may fix $D_1>D_{0}$ such that $v_{D_1}\neq 0$. Since $\{0\} = \bigcap_{D_2>D_1} U_{D_2}$, we may choose $D_2>D_1$ such that $v_{D_1}\notin U_{D_2}$. The relations $w-v_{D_i} \in U_{D_i}$ for $i=1,2$, along with $U_{D_2}\subsetneq U_{D_1}$, yield  $v_{D_1}-v_{D_2}\in U_{D_1}$, and thus $v_{D_1}-v_{D_2}\in V\cap U_{D_1}=\{0\}$, which implies that $v_{D_1}\in U_{D_2}$, which is a contradiction.
\end{proof}


We now turn our attention to the ideles. The following discussion mirrors the ideas of \cite[Chp. II, \S 16]{CaFr} for global fields.

Since $\A_{X}$ is a topological ring, its group of invertible elements, the idele group $\I_{X}$, is endowed with the subset topology of the map
\begin{align*}
\I_{X} &\hookrightarrow \A_{X}\times \A_{X}\\
\alpha &\mapsto (\alpha ,\alpha^{-1})
\end{align*}
where $\A_{X}\times \A_{X}$ has the product topology. We shall refer to this topology as the \emph{$\I_{X}$-topology}. A basis of open neighborhoods of $1\in \I_{X}$ with respect to this topology consists of the sets
\begin{equation}
\label{E:basisIdelesVD}
	   V_{D}
	:= \!\!\! \prod_{x\in\supp D} \!\! (1+\m_{x}^{n_{x}})
	   \times
	   \!\!\!\!
	   \prod_{x\notin\supp D} \!\!\! \widehat{\O}^{*}_{X,x},	
\end{equation}
where $D=\sum_{x} n_{x} x$ is an \emph{effective} divisor on $X$. The $\I_{X}$-topology gives $\I_{X}$ the structure of a topological group. For this, merely restricting the $\A_{X}$-topology does not work. Analogously to the additive case, we will repeatedly make use of the following facts:
\begin{enumerate}
	

\item $\I_{X} \cap \A_{X}^{+} = V_{0} = \bigcup_{D} V_{D}$ and $(1) = \bigcap_{D} V_{D}$ where $D$ runs over effective divisors.

\item If $D \geq D'$, then $V_{D}\subseteq V_{D'}$.

\item Since $V_{D}$ is an open subgroup, it is also closed, thus clopen.

\end{enumerate}

On the other hand, a basic neighborhood of $1$ with respect to the $\A_{X}$-topology has the form $(1+U_{D})\cap\I_{X}$, where it can also be assumed that $D$ is effective. Now, given an effective divisor $D$, we have
\begin{equation}
\label{eq:VDinUD}
 	V_{D}\subseteq  (1+U_{D})\cap \I_{X},
\end{equation}
and thus the inclusion $\I_{X}\hookrightarrow \A_{X}$ is continuous with respect to the corresponding topologies.

Still following~\cite[Chp. II, \S 16]{CaFr}, where the \emph{content} of an idele is defined, we introduce a subgroup $\I_{X}^{1}\subset \I_{X}$ and study its topology.

There is a natural way to associate a divisor to an idele. Let $x\in X$ be a (closed) point and let $v_{x}$ be its associated valuation. Then the map
\begin{align}
\label{eq:IXaDivisor}
	\alpha & \mapsto D(\alpha):=\sum_{x\in X} v_{x}(\alpha_{x}) \, x :
	\quad \I_{X} \to \Div(X)
\end{align}
is a group homomorphism.

\begin{defn}
We define $\I_{X}^1$ as the ideles whose associated divisor via the map~\eqref{eq:IXaDivisor} has null degree:
\[
	   \I_{X}^1 
	:= \big\{\alpha\in \I_{X} \:\vert\: \operatorname{deg} D(\alpha):=\sum_{x\in X}\deg(x)v_{x}(\alpha_{x})=0 \big\}
\]
where $\deg(x)$ is the degree of the extension $k\hookrightarrow k(x)$.
\end{defn}

The following is the analog of~(\cite[Chp. II, p. 69, second lemma]{CaFr}).

\begin{lem}
\label{lem:IX1top}
On $\I_{X}^1$, the $\A_{X}$-topology and the $\I_{X}$-topology coincide.
\end{lem}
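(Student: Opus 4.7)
The plan is to compare the two topologies on $\I_X^1$ by inclusion of basic neighborhoods of $1$. One direction is already free: the inclusion~\eqref{eq:VDinUD}, $V_D \subseteq (1+U_D)\cap \I_X$, together with the continuity of $\I_X\hookrightarrow \A_X$, shows that the $\A_X$-topology is coarser than the $\I_X$-topology on all of $\I_X$, hence also on $\I_X^1$. So what remains is the reverse: for every effective divisor $D$, exhibit some $D'\in \Div(X)$ such that $(1+U_{D'})\cap \I_X^1 \subseteq V_D$.

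My claim is that the choice $D'=D$ itself works, and the mechanism is that the degree-$0$ condition rigidifies the local behaviour at points outside $\supp D$. Concretely, I would take $\alpha\in (1+U_D)\cap \I_X^1$ and unpack the condition $\alpha-1\in U_D$ point by point. At each $x\in\supp D$, since $D$ is effective so $n_x\geq 1$, one has $v_x(\alpha_x-1)\geq n_x\geq 1$, which forces $\alpha_x\in 1+\m_x^{n_x}$; in particular $\alpha_x\in \widehat{\O}_{X,x}^*$ with $v_x(\alpha_x)=0$. At each $x\notin \supp D$, the condition only yields $v_x(\alpha_x-1)\geq 0$, i.e.\ $\alpha_x\in \widehat{\O}_{X,x}$, so all that is guaranteed a priori is $v_x(\alpha_x)\geq 0$.

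This is exactly the place where the degree hypothesis enters. Because $\alpha$ is an idele, $v_x(\alpha_x)\neq 0$ for only finitely many $x$; because $\alpha\in \I_X^1$,
\[
\sum_{x\in X}\deg(x)\,v_x(\alpha_x)=0.
\]
By the previous paragraph every summand is a nonnegative integer, so each must vanish, and therefore $v_x(\alpha_x)=0$ at every $x\notin\supp D$ as well. Combined with the stronger conclusion at $x\in\supp D$, this places $\alpha$ in $V_D$, proving $(1+U_D)\cap \I_X^1\subseteq V_D$ and hence the reverse inclusion of topologies.

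I expect the step that carries the content of the lemma to be precisely the use of the degree-$0$ constraint to upgrade ``$v_x(\alpha_x)\geq 0$'' to ``$v_x(\alpha_x)=0$'' at points outside $\supp D$. Without this constraint the argument fails — a local uniformizer at some point $x_0\notin \supp D$ extended by $1$ elsewhere lies in $1+U_D$ but not in $V_D$ — which is exactly why the two topologies genuinely differ on $\I_X$ yet agree on $\I_X^1$.
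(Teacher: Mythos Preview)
Your proof is correct and follows essentially the same approach as the paper's: both reduce to showing $(1+U_D)\cap \I_X^1 \subseteq V_D$ for effective $D$, and both use the degree-zero condition to force $v_x(\alpha_x)=0$ at points $x\notin\supp D$. Your presentation is in fact slightly more direct---you observe that all summands in $\sum_x \deg(x)\,v_x(\alpha_x)=0$ are nonnegative and conclude immediately, whereas the paper phrases the same step as a short contradiction argument---but the underlying idea is identical.
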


\begin{proof}
It is enough to compare the basic neighborhoods of  $\alpha=1\in \I_{X}^{1}$ with respect to both topologies. Recalling~\eqref{eq:VDinUD}, it suffices to show that
\[
 	(1+U_{D})\cap \I_{X}^{1} \subseteq V_{D}
\]
for an effective divisor $D=\sum_{x} n_{x} x$. By definition,
\begin{align*}
	U_{D} &= \left\{
	       \alpha \in \A_{X}
		   \bigg\vert
		   \begin{aligned}
		   v_{x}(\alpha_{x})\geq n_{x} &\text{ if $x\in \supp D$}
		                               \\
		   v_{x}(\alpha_{x})\geq 0     &\text{ if $x\notin \supp D$}
	\end{aligned} 
	\right\}
\intertext{and}
    V_{D} &= \left\{
	       \beta \in \I^1_{X}
		   \bigg\vert 
		   \begin{aligned}
		   v_{x}(\beta_{x}-1)\geq n_{x} &\text{ if $x\in \supp D$}
		                                \\
		   v_{x}(\beta_{x})= 0          &\text{ if $x\notin \supp D$}
           \end{aligned}
           \right\}.
\end{align*}
For $\alpha \in U_{D}$ such that $1+\alpha \in\I_{X}^{1}$ we have
\[
\begin{cases}
	      v_{x}((1+\alpha_{x}) -1)=v_{x}(\alpha_{x})\geq n_{x} 
	& \forall x\in \supp D,
	\\
	      v_{x}(1+\alpha_{x})\geq \operatorname{min} \{v_{x}(1),v_{x}(\alpha_{x})\}=0 
	& \forall x\notin \supp D.
\end{cases}
\]
Suppose that there is some $x_{0}\notin \supp D$ such that $v_{x_{0}}(1+\alpha)>0$. Since
\[
	\sum_{x\in X}v_{x}(1+\alpha_{x})=0,
\]
there must exist some $x_{1}\in \supp D$ such that $v_{x_{1}}(1+\alpha_{x})<0$. Since $v_{x_{1}}(1) = 0$, this would imply that $v_{x_{1}}(\alpha_{x})<0$, leading to a contradiction. Therefore $v_{x}(1+\alpha_{x})=0 $ for all $ x\notin \supp D$ and consequently $1+\alpha \in V_{D}$.
\end{proof}

For example, the kernel of the map~\eqref{eq:IXaDivisor} on the ideles is the open
neighborhood~\eqref{E:basisIdelesVD} associated to the zero divisor:
\[
	V_{0} = \{ \alpha \in \I_{X} : v_{x}(\alpha_{x}) = 0 \ \forall x\} 
	      = \prod_{x} \widehat{\O}^{*}_{X,x},
\]
and it is clearly a subset of $\I_{X}^{1}$, with $V_{0} = \I_{X}^{1} \cap \A_{X}^{+}$.
Note in particular that this implies $\I_{X}^{1}$ is an open subgroup of the idele group $\I_{X}$.

\begin{thm}
\label{thm:LocComIFFFinite}
\leavevmode
\begin{enumerate}

\item $\I_{X}^{1}$ is a Hausdorff topological group.

\item $\I_{X}^{1}$ is locally compact if and only if $k$ is finite. 

\item $\I_{X}^{1}$ is not locally connected.

\end{enumerate}
\end{thm}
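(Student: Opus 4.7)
The plan is to treat the three assertions in turn, in each case exploiting the basic neighborhoods $V_{D}$ (with $D$ effective) of $1\in\I_{X}^{1}$ and their structural properties.

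For part (1), since $\I_{X}$ is a topological group in the $\I_{X}$-topology (as already observed in the excerpt), the subgroup $\I_{X}^{1}\subseteq\I_{X}$ is also a topological group in the subspace topology. To verify that it is Hausdorff, I would invoke the listed property $\bigcap_{D}V_{D}=\{1\}$ (over effective $D$) together with the fact that each $V_{D}$ is clopen: this shows $\{1\}$ is closed in $\I_{X}$, and hence in $\I_{X}^{1}$, and a topological group whose identity is closed is already Hausdorff.

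For part (2), I would first observe that every $V_{D}$ with $D$ effective is in fact contained in $\I_{X}^{1}$, since each $\alpha\in V_{D}$ satisfies $v_{x}(\alpha_{x})=0$ for all $x$. So the $V_{D}$ form a neighborhood basis of $1$ in $\I_{X}^{1}$, and the subspace topology on $V_{0}=\prod_{x}\widehat{\O}_{X,x}^{*}$ coincides with the product topology. If $k$ is finite, then every residue field $k(x)$ is finite, hence each factor $\widehat{\O}_{X,x}^{*}\simeq k(x)^{*}\times(1+\m_{x})$ is compact (the first factor is finite discrete, while the second is profinite as $\varprojlim (1+\m_{x})/(1+\m_{x}^{n})$), and Tychonoff yields compactness of $V_{0}$, which is a compact neighborhood of $1$. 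Conversely, if $k$ is infinite and $C$ is any compact neighborhood of $1$, then $V_{D}\subseteq C$ for some effective $D$; being clopen, $V_{D}$ is closed in $C$ and therefore compact. Projecting to the factor $\widehat{\O}_{X,x_{0}}^{*}$ for any $x_{0}\notin\supp D$ (such an $x_{0}$ exists because $X$ has infinitely many closed points) would then make $\widehat{\O}_{X,x_{0}}^{*}$ compact, contradicting the fact that the subspace $k(x_{0})^{*}\subset\widehat{\O}_{X,x_{0}}^{*}$ is infinite discrete (indeed $(1+\m_{x_{0}})\cap k(x_{0})^{*}=\{1\}$, while $k(x_{0})\supseteq k$ is infinite).

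For part (3), suppose for contradiction that $U\subseteq\I_{X}^{1}$ is a connected open neighborhood of $1$. Then $U\supseteq V_{D_{0}}$ for some effective $D_{0}$, and for every effective $D\geq D_{0}$ the subgroup $V_{D}\subseteq V_{D_{0}}\subseteq U$ is clopen in $\I_{X}^{1}$ and hence clopen in $U$; since $1\in V_{D}$ and $U$ is connected, this forces $V_{D}=U$. Intersecting over all such $D$ gives $U=\bigcap_{D\geq D_{0}}V_{D}=\{1\}$, contradicting openness: every $V_{D}$ contains nontrivial ideles, e.g.\ the one whose $x_{0}$-component is $1+t_{x_{0}}$ and whose other components are $1$, for any $x_{0}\notin\supp D$. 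Hence $1$ has no connected open neighborhood in $\I_{X}^{1}$, and $\I_{X}^{1}$ is not locally connected. The main obstacle, I expect, is the converse direction in (2): extracting a contradiction from the compactness of $V_{D}$ when $k$ is infinite requires identifying $k(x_{0})^{*}$ as a discrete infinite subspace of $\widehat{\O}_{X,x_{0}}^{*}$, which in turn depends on the product decomposition of the unit group of a complete DVR with perfect residue field.
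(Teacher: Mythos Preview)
Your proof is correct. Part~(1) and the forward direction of~(2) match the paper essentially verbatim. For the converse in~(2), the paper instead exhibits an explicit disjoint open cover of $V_{0}$ by the cosets $(t+\m_{x})\times\prod_{y\neq x}\widehat{\O}_{X,y}^{*}$ with $t\in k(x)^{*}$, which admits no finite subcover when $k$ is infinite; your projection argument onto a single factor $\widehat{\O}_{X,x_{0}}^{*}$ is an equally short alternative, relying (as you note) on the splitting $\widehat{\O}_{X,x_{0}}^{*}\simeq k(x_{0})^{*}\times(1+\m_{x_{0}})$ that the paper records in~\S\ref{sec:definitions}. For~(3), the paper simply reuses its disjoint cover to observe that $V_{0}$ is disconnected; your argument, showing that any connected open $U\ni 1$ would be forced to equal every clopen $V_{D}$ it contains and hence collapse to $\{1\}$, is a genuinely different route and in fact supplies the extra step (from ``$V_{0}$ disconnected'' to ``no connected neighborhood of $1$'') that the paper leaves implicit.
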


\begin{proof}
That $\I_{X}^{1}$ is Hausdorff follows immediately from Lemma~\ref{lem:IX1top}. 

$\I_{X}^{1}$ is locally compact if and only if the open subset $V_{0}= \prod_{x\in X} \widehat{\O}_{X,x}^{*}$ is compact. If $k$ is finite, by Tychonoff's theorem, 
each $\widehat{\O}_{X,x}$, and hence also $V_{0}$, is compact. If $k$ is not finite, let $x$ be a closed point and consider the cover of $V_{0}$ given by the open subsets
\[
  (t+\m_{x})
  \times 
  \prod_{y\neq x}\widehat{\O}^{*}_{X,y},
\]
where $t$ runs over $k(x)^{*}$. Since $k^{*} \subseteq k(x)^{*}$ is infinite, this  cover admits no finite subcover.

For the third part, note that the above cover is disjoint, therefore $V_{0}$ is not connected precisely when $k(x)^{*}$ has more than one element for some $x$, which always clearly holds for $k$ infinite. For finite $k$ recall that, although the number of rational points of $X$ is finite, the number of closed points is infinite.
\end{proof}

\begin{defn}
We define
\[
	\bar{\I}_{X}^{1}:=  \I_{X}^{1} /\prod_{x\in X}(1+{\m}_{x})
\]
and endow it with the quotient topology via the projection ${\I}_{X}^{1} \overset{p}\to \bar{\I}_{X}^{1}$.
\end{defn}

Note that $\prod_{x} (1 + \m_{x}) = \bigcap_{x} V_{x}$, where $V_{x}$ is the neighborhood corresponding to $x$ regarded as a divisor, and thus $\bar{\I}_{X}^{1}$ is Hausdorff. However, $\prod_{x} (1 + \m_{x})$ may not be compact, and therefore the projection, although open, may not be a closed map.

\begin{thm}
\label{thm:barI-T2totdis}
A basis of neighborhoods of $1\in \bar{\I}_{X}^{1}$ is given by the subgroups
\[
    \bar{V}_{S} := \prod_{x\in S}\{1\} \times \prod_{x\notin S} k(x)^{*},
\]
where $S \subset X$ runs over the finite subsets. $\bar{\I}_{X}^{1}$ is Hausdorff and totally disconnected.
\end{thm}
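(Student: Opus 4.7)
The plan is to compute directly the image $p(V_D)$ of a basic open neighborhood $V_D$ from~\eqref{E:basisIdelesVD}, where $D=\sum_{x} n_{x} x$ is effective. Because $p$ is the quotient map by the subgroup $\prod_{x}(1+\m_{x})$, it is open (the preimage of $p(U)$ is $U\cdot \prod_{x}(1+\m_{x})$, a union of translates of $U$), so $\{p(V_{D})\}$ is automatically a neighborhood base at $1\in\bar{\I}_{X}^{1}$, and the computation can be carried out factor by factor. For $x\in\supp D$, effectiveness ($n_{x}\geq 1$) yields $(1+\m_{x}^{n_{x}})\cdot(1+\m_{x})=1+\m_{x}$, collapsing to $\{1\}$ after quotienting; for $x\notin\supp D$, $\widehat{\O}_{X,x}^{*}\cdot(1+\m_{x})=\widehat{\O}_{X,x}^{*}$, with quotient $\widehat{\O}_{X,x}^{*}/(1+\m_{x})\simeq k(x)^{*}$ via the splitting $\widehat{\O}_{X,x}^{*}\simeq k(x)^{*}\times (1+\m_{x})$ already recorded in Section~\ref{sec:definitions}. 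Hence $p(V_{D})=\bar{V}_{\supp D}$. Since every finite $S\subset X$ occurs as $\supp D$ for some effective $D$ (e.g.\ $D=\sum_{x\in S}x$), the collection $\{\bar{V}_{S}\}$ exhausts a neighborhood base at $1$.

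For the Hausdorff property, since $\bar{\I}_{X}^{1}$ is a topological group, it suffices to show that $\bigcap_{S} \bar{V}_{S}=\{1\}$. Taking preimages under $p$, this amounts to
\[
    \bigcap_{S\text{ finite}} \Big(\prod_{x\in S}(1+\m_{x}) \times \prod_{x\notin S}\widehat{\O}_{X,x}^{*}\Big) \;=\; \prod_{x\in X}(1+\m_{x}),
\]
which follows by taking $S=\{x_{0}\}$ for each closed point $x_{0}$ in turn: the $x_{0}$-component of any element of the intersection must lie in $1+\m_{x_{0}}$.

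For total disconnectedness, each $\bar{V}_{S}$ is an open subgroup of $\bar{\I}_{X}^{1}$, hence also closed, since its complement is the union of the remaining cosets (each open). Thus $1$ has a base of clopen neighborhoods, and by translation so does every point. Combined with the Hausdorff property, this implies that connected components are singletons: if distinct points lay in the same connected component, we could translate to assume one of them is $1$ and the other, say $h\neq 1$, is in the component of $1$; choosing $S$ with $h\notin \bar{V}_{S}$ (possible by the intersection above), the clopen $\bar{V}_{S}$ would properly separate the component, a contradiction.

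The computational heart of the argument is the identification $p(V_{D})=\bar{V}_{\supp D}$, which hinges on the asymmetry built into $V_{D}$: at supported points, the small factor $(1+\m_{x}^{n_{x}})$ is absorbed by $(1+\m_{x})$, while at unsupported points the full unit group $\widehat{\O}_{X,x}^{*}$ already appears in $V_{D}$. Once this identification is made, both the Hausdorff property and total disconnectedness are formal consequences of standard topological-group considerations.
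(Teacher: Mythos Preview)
Your proof is correct and follows essentially the same route as the paper: push the neighborhood base $\{V_D\}$ forward along the open quotient map $p$, compute $p(V_D)=\bar V_{\supp D}$ factor by factor, and then use that the $\bar V_S$ are clopen subgroups with $\bigcap_S \bar V_S=\{1\}$ to conclude Hausdorff and totally disconnected. The only cosmetic differences are that the paper cites an earlier remark for Hausdorffness and phrases total disconnectedness via the identity component being contained in $\bigcap_S \bar V_S$, whereas you argue both points directly.
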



\begin{proof}
Recall from  Lemma~\ref{lem:IX1top} that a basis of neighborhoods of $1\in \I_{X}^{1}$ is given by the collection of sets of the form
\[
	  V_{D} 
	= \prod_{x\in \supp D}(1+\m_{x}^{n_{x}})
	  \times
	  \prod_{x\notin \supp D} \widehat{\mathcal O}_{X,x}^{*},
\]
where $D=\sum_{x} n_{x}\cdot x $ is an effective divisor. Since $p$ is the quotient map, the images $p(V_{D})$ are an open basis of neighborhoods of $1$ in $\bar{\I}_{X}^{1}$. Thus, for the first part, it suffices to check that $p(V_{D}) = \bar{V}_{S}$, where $S = \supp(D)$. Since $V_{D}\subseteq V_{0}$ for $D \geq 0$,
\[
	p(V_{D}) = V_{D}/(V_{D}\cap \prod(1+{\m}_{x})) \hookrightarrow    
	p(V_{0}) = V_{0}/(V_{0}\cap \prod(1+{\m}_{x}))   = 
	\prod_{x\in X} k(x)^{*} \hookrightarrow
	\bar{\I}_{X}^{1}.
\]
Keeping in mind that
\[
	  V_{D} \cap \prod_{x}(1+{\m}_{x})
	= \prod_{x\in \supp D}(1+\m_{x}^{n_{x}})
	  \times\!\!\!\!
	  \prod_{x\notin \supp D} (1+\m_{x}),
\]
it follows that
\[
     p(V_{D})
   = V_{D}/(V_{D}\cap \prod(1+{\m}_{x}))
   = \prod_{x\in \supp D}\{ 1\}
     \times
     \prod_{x\notin \supp D} k(x)^{*}
   = \bar{V}_{S}.
\]

Since we already know $\bar{\I}_{X}^{1}$ is Hausdorff, it remains to check that $\bar{\I}_{X}^{1}$ is totally disconnected. First, the connected component of the identity, $C$, is contained in the intersection of all its clopen neighborhoods 
Thus
\[
	C
	\subseteq  
	\bigcap_{\substack{\bar{V} \subset \I_{X}^{1} \\ \text{clopen}}} \bar{V}
	\subseteq  
	\bigcap_{\substack{S\subset X \\ \text{finite} }} \bar{V}_{S} = \{1\}.
\]
Accordingly, since the quotient of a topological group modulo its identity component is always totally disconnected, this implies $\bar{\I}_{X}^{1}$ is totally disconnected.
\end{proof}

\subsection{The function field embedded in the adeles.}

We now consider how the function field $\Sigma_{X}$ and its unit group $\Sigma_{X}^{*}$, sit inside the adeles and ideles, respectively, via the diagonal embedding. The main result here is the Strong Approximation Theorem~(Theorem~\ref{T:StrongApprox}). The statement parallels~\cite{CaFr}, but note that $\Sigma_{X}$ is not a global field for an infinite base field $k$, and thus our Theorem~(Theorem~\ref{T:StrongApprox}) is not a consequence of the version in~\cite{CaFr}. Our proof uses more geometric language.

\begin{prop}
\label{prop:Sigmadiscrete}
The function field $\Sigma_{X}$ is closed and discrete in $\A_{X}$. 
\end{prop}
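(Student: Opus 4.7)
The plan is to apply Proposition~\ref{P:discreteClosed} directly: since $\Sigma_{X}$ is a $k$-vector subspace of $\A_{X}$, it suffices to prove that $\dim_{k}(\Sigma_{X} \cap \A_{X}^{+}) < \infty$, and then closedness comes for free from the same proposition.

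First I would unpack what the intersection $\Sigma_{X} \cap \A_{X}^{+}$ is under the diagonal embedding. An element $f \in \Sigma_{X}$ lies in $\A_{X}^{+}$ precisely when $v_{x}(f) \geq 0$ for every closed point $x \in X$, i.e. when $f$ belongs to $\O_{X,x}$ for all $x$. Equivalently, $f$ is a global section of the structure sheaf:
\[
    \Sigma_{X} \cap \A_{X}^{+} \;=\; \bigcap_{x \in X} \O_{X,x} \;=\; \Hco^{0}(X, \O_{X}).
\]
The key geometric input now is that $X$ is smooth, complete and connected, and $k$ is assumed algebraically closed in $\Sigma_{X}$, so that $\Hco^{0}(X, \O_{X}) = k$. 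In particular, $\dim_{k}(\Sigma_{X} \cap \A_{X}^{+}) = 1 < \infty$.

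With this finiteness established, Proposition~\ref{P:discreteClosed} simultaneously yields both conclusions: the subspace $\Sigma_{X}$ carries the discrete topology and is closed in $\A_{X}$.

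The only step requiring genuine content is the identification $\Sigma_{X} \cap \A_{X}^{+} = \Hco^{0}(X, \O_{X})$ together with the fact that global regular functions on a complete connected curve reduce to constants; both are standard from the setup, so I do not anticipate an obstacle here beyond citing the hypotheses on $X$ and $k$.
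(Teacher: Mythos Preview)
Your proof is correct and essentially identical to the paper's: both invoke Proposition~\ref{P:discreteClosed} and identify $\Sigma_{X}\cap\A_{X}^{+}$ with $\Hco^{0}(X,\O_{X})$, which is finite-dimensional over $k$. The paper stops at finite-dimensionality while you go on to note $\Hco^{0}(X,\O_{X})=k$, but this extra precision is harmless and the argument is the same.
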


\begin{proof}
By Proposition~\ref{P:discreteClosed}, it is enough to show discreteness, which follows from the finite-dimensionality of the $k$-vector space $\A_{X}^+\cap \Sigma_{X}=\Hco^0(X,\O_{X})$.
\end{proof}

\begin{thm}[Strong Approximation Theorem]
\label{T:StrongApprox}
The diagonal embedding
\[
\Sigma_{X} \to {\underset {x\neq x_{0}}\prod'}K_{x}
\]
(the restricted product of $K_{x}$ with respect to the subrings $\widehat{\O}_{X,x}$) is injective with dense image in the topology induced by $\A_{X}$.
\end{thm}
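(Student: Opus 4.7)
Injectivity is immediate: for any single $x\neq x_{0}$, the further projection onto $K_{x}$ is already injective on $\Sigma_{X}$, so the same is true of the restricted product. All the content is in density.

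My approach is to identify the target $\underset{x\neq x_{0}}\prod' K_{x}$ with the additive quotient $\A_{X}/K_{x_{0}}$, where $K_{x_{0}}\hookrightarrow \A_{X}$ is the subspace of adeles concentrated at $x_{0}$. The natural projection $\A_{X}\to \A_{X}/K_{x_{0}}$ is then a topological quotient whose open basis at $0$ consists of the images of the $U_{D}$, so density of the diagonal embedding is equivalent to
\[
    \Sigma_{X}+\tilde U_{D}\;=\;\A_{X}
    \qquad\text{for every divisor } D,
\]
where $\tilde U_{D}:=U_{D}+K_{x_{0}}$. The key observation is that dropping the local constraint at $x_{0}$ is the same as allowing unbounded pole order there, so
\[
    \tilde U_{D}\;=\;\bigcup_{N\geq 0} U_{D-N\cdot x_{0}}.
\]
It thus suffices to find, for each $D$, some $N\gg 0$ with $\A_{X}=\Sigma_{X}+U_{D-N x_{0}}$.

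This last equality is exactly the vanishing of a sheaf cohomology group on $X$. The adelic description of cohomology yields the canonical identifications
\[
    \Sigma_{X}\cap U_{-E}\;\cong\;\Hco^{0}(X,\O_{X}(E)),\qquad
    \A_{X}/(\Sigma_{X}+U_{-E})\;\cong\;\Hco^{1}(X,\O_{X}(E)).
\]
Taking $E=N\cdot x_{0}-D$, of degree $N-\deg D$ (recall $x_{0}$ is $k$-rational, so $\deg(x_{0})=1$), Serre duality forces $\Hco^{1}(X,\O_{X}(E))=0$ once $N>2g-2+\deg D$, finishing the argument.

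The main obstacle I foresee is precisely this cohomological identification: namely, making sure that Riemann--Roch and the $\Hco^{1}$-description of $\A_{X}/(\Sigma_{X}+U_{-E})$ are available in the present generality of a possibly infinite perfect base field $k$. If a more hands-on route is preferred, one may replace the $\Hco^{1}$-vanishing step by a direct dimension count: for $N$ sufficiently large the evaluation map
\[
    \Hco^{0}\bigl(X,\,\O_{X}(N\cdot x_{0}+\textstyle\sum_{x\in\supp D}m_{x}\cdot x)\bigr)
    \;\longrightarrow\;\bigoplus_{x\in\supp D}\m_{x}^{-m_{x}}/\m_{x}^{n_{x}}
\]
becomes surjective by Riemann--Roch on $X$ (with $m_{x}\geq 0$ chosen so that the given $\alpha_{x}$ lies in $\m_{x}^{-m_{x}}$); a preimage of the class of $(\alpha_{x})_{x\in\supp D}$ is then a rational function approximating $\alpha$ to the prescribed order at every place $x\neq x_{0}$ and regular away from $\supp D\cup\{x_{0}\}$, which is exactly what density requires.
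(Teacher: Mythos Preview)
Your proof is correct and follows essentially the same route as the paper: both reduce density to a cohomological vanishing on $X$ obtained by twisting by a large multiple of $x_{0}$, and your alternative ``hands-on'' argument via the surjectivity of the evaluation map $\Hco^{0}(X,\O_{X}(N x_{0}-D(\lambda)))\to \bigoplus_{x\in\supp D}\m_{x}^{v_{x}(\lambda)}/\m_{x}^{n_{x}}$ is almost verbatim what the paper does. Your primary formulation via $\A_{X}/(\Sigma_{X}+U_{-E})\cong \Hco^{1}(X,\O_{X}(E))$ is just a cleaner packaging of the same step, and the adelic $\Hco^{1}$-identification you worry about is indeed valid over any (perfect) base field, so there is no gap.
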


\begin{proof}
We shall follow the proof of \cite[Chp. II, p. 67, Strong Approximation Theorem]{CaFr} using geometric language. The injectivity follows straightforwardly from the fact that $\Sigma_{X}\hookrightarrow K_{x}$ for all $x$. 

To see that the image is dense, we have to prove that every neighborhood of an arbitrary element $\bar\lambda \in \prod'_{x\neq x_{0}} K_{x} $ has nonempty intersection with $\Sigma_{X}$. A basis of open neighborhoods of $0$ in the induced topology consists of those subsets of the form $ \pi_{x_{0}}(U_{D})$ for an effective divisor $D$, where $\pi_{x_{0}}:\A_{X} \to \prod'_{x\neq x_{0}} K_{x}$ removes the $x_{0}$ component. 
Thus, it suffices to prove that $\Sigma_{X} \cap \big( \bar\lambda+  \pi_{x_{0}}(U_{D}) \big)\neq\emptyset$, which is tantamount to
\[
	\Sigma_{X} \cap   (\lambda  + U_{D} + K_{x_{0}})  \neq  \emptyset,
\]
where $\lambda $ is a preimage of $\bar\lambda$. Since $D$ can be made arbitrarily large, we can assume that $D(\lambda)\leq  D$. Moreover,
\[
	  U_{D} + K_{x_{0}} 
	= \hspace{-1em}
	  \prod_{x \in  \supp  D\setminus\{x_{0}\}}
	  \hspace{-1em}
	  {\m}_{x}^{n_{x}} \times K_{x_{0}}
	  \times
	  \hspace{-1.5em}
	  \prod_{x\notin \supp  D\cup\{x_{0}\}}
	  \hspace{-1em}
	  \widehat{\O}_{X,x},
\]
and thus we can also assume that $x_{0}\notin \supp  D$. The statement we want to prove is equivalent to the existence of a function $f\in\Sigma_{X}$ verifying:
\begin{equation}
\label{eq:fApprox}
\begin{gathered}
	\text{$v_{x}(f-\lambda) \geq n_{x}$ for all $x\in \supp  D $,}
	\\ 
	\text{$v_{x}(f)\geq 0$ for all $x\notin \supp   D\cup\{x_{0}\}$.}
\end{gathered}
\end{equation}
Let us consider the sequence
\[
	0\to \O_{X}(-D) \to \O_{X}(-D(\lambda)) \to \O_{X}(-D(\lambda))/\O_{X}(-D) \to 0.
\]
If we tensor this with $\O_{X}(n x_{0})$ for $n\gg 0$, we obtain the following surjection via the long exact cohomology sequence:
\[
	                             H^0(X, \O_{X}(nx_{0} -D(\lambda)) )
	\overset{p}{\longrightarrow} \bigoplus_{x\in \supp  D} \m_{x}^{v_{x}(\lambda)} / \m_{x}^{n_{x}} \to 0.
\]
Now, let $\hat \lambda$ be the class of $\lambda$ in $\bigoplus_{x\in X} \m_{x}^{v_{x}(\lambda)}  /  \m_{x}^{n_{x}} $ and let  $f\in p^{-1}(\hat\lambda) \subset\Sigma_{X}$ be a preimage by $p$ of $\hat\lambda$. This is the function we need.
First, note that $f-\lambda\in \A_{X}$ and the identity $p(f)-\hat\lambda=0$ yields
\[
	f-\lambda\in  \m_{x}^{n_{x}} \quad \forall x\in \supp  D.
\]
Finally, since $D(f)+nx_{0} -D(\lambda)\geq 0$, we have $v_{x}(f)\geq 0$ for $x\notin \supp   D\cup\{x_{0}\} $. 
\end{proof}

Compactness of the quotient $\I_{X}^{1}/\Sigma_{X}^{*}$ in the case of global fields is a basic result of class field theory, where $\I_{X}^{1}/\Sigma_{X}^{*}$ corresponds to the idele class group. Here we give an analogous general result as well as a converse.

\begin{thm}
\label{thm:PropertiesSigma*}
$\Sigma_{X}^{*}$ is discrete and closed as a subgroup of $\I_{X}^{1}$. The quotient $\I_{X}^{1}/\Sigma_{X}^{*}$ is compact if and only if $k$ is finite.
\end{thm}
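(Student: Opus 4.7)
The plan is to prove the statement in two parts. For discreteness and closedness of $\Sigma_X^*$ in $\I_X^1$, the key observation is that $\Sigma_X^* = \Sigma_X \cap \I_X^1$ under the diagonal embedding, where the Weil product formula ensures $\Sigma_X^* \subseteq \I_X^1$. Proposition~\ref{prop:Sigmadiscrete} shows $\Sigma_X$ is closed and discrete in $\A_X$, and by Lemma~\ref{lem:IX1top} the $\A_X$- and $\I_X$-topologies coincide on $\I_X^1$; intersecting with $\I_X^1$ therefore transfers both properties to $\Sigma_X^*$.

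For the compactness characterization, I would set up the short exact sequence of topological groups
\[
1 \to V_0/k^* \to \I_X^1/\Sigma_X^* \to \Pic^0(X) \to 0
\]
arising from the divisor map $\I_X^1 \to \Div^0(X)$, $\alpha \mapsto \sum_x v_x(\alpha_x)\, x$. This map is continuous when $\Div^0(X)$ (and hence $\Pic^0(X)$) is given the discrete topology, since its kernel $V_0$ is open in $\I_X^1$. The identification $V_0 \cap \Sigma_X^* = k^*$ holds because any element of $\Sigma_X^*$ with trivial divisor is a nowhere-vanishing regular function on the complete connected curve $X$, hence a constant in $k^*$.

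When $k$ is finite, each $\widehat{\O}_{X,x}^*$ is profinite as an inverse limit of the finite groups $\widehat{\O}_{X,x}^*/(1+\m_x^n)$, so $V_0=\prod_x\widehat{\O}_{X,x}^*$ is compact by Tychonoff (the subspace topology on $V_0$ from $\I_X^1$ coincides with the product topology), as is $V_0/k^*$. Combined with the classical finiteness of $\Pic^0(X)$ over finite base fields, this realizes $V_0/k^*$ as a compact open subgroup of finite index in $\I_X^1/\Sigma_X^*$, which is therefore compact. Conversely, if $\I_X^1/\Sigma_X^*$ is compact, then since $V_0\Sigma_X^*$ is open and hence closed in $\I_X^1$, the subgroup $V_0/k^*$ is a closed subgroup of $\I_X^1/\Sigma_X^*$ and thus compact as well.

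Fixing a $k$-rational point $x_0$ (which exists by hypothesis) and any other closed point $x_1$, consider the map
\[
V_0 \to k(x_1)^*, \qquad \alpha \mapsto \alpha(x_1)\,\alpha(x_0)^{-1},
\]
where $\alpha(x_0)\in k^* \subseteq k(x_1)^*$. This map is continuous with respect to the discrete topology on $k(x_1)^*$ (its kernel contains the basic neighborhood $V_{x_0+x_1}$), surjective (given $b\in k(x_1)^*$, lift $b$ to the $x_1$-component and set all other components equal to $1$), and trivial on the diagonal $k^*\subseteq V_0$. It therefore induces a continuous surjection $V_0/k^*\twoheadrightarrow k(x_1)^*$, and compactness of the source forces the discrete group $k(x_1)^*$ to be finite, whence $k$ is finite. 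The main subtlety lies in this last step: the naive single-point map $V_0/k^*\to k(x_1)^*/k^*$ yields no information when $k$ is algebraically closed, so one must combine a rational point $x_0$ with a second closed point $x_1$ to extract a continuous surjection onto an infinite discrete group.
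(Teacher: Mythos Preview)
Your proof is correct, and the first part (discreteness and closedness) matches the paper exactly. For the compactness characterization, however, you take a genuinely different route in both directions.

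For $k$ finite $\Rightarrow$ compact, the paper avoids the exact sequence entirely: it fixes an effective divisor $D$ with $\deg(D)\ge g$ and uses Riemann--Roch to show that for every $\lambda\in\I_X^1$ there is $f\in\Sigma_X^*$ with $f\lambda\in U_{-D}$, so the compact set $U_{-D}\cap\I_X^1$ surjects onto $\I_X^1/\Sigma_X^*$. Your argument instead invokes the finiteness of $\Pic^0(X)$ over a finite field as an external input; this is cleaner structurally but less self-contained, since the paper's Riemann--Roch argument in effect reproves what is needed without appealing to the class-number theorem for function fields.

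For compact $\Rightarrow$ $k$ finite, the paper observes that for a nonzero effective $D$ the map $V_D\to\I_X^1/\Sigma_X^*$ is injective (as $V_D\cap\Sigma_X^*=\{1\}$), so $V_D$ itself is compact, and then quotes Theorem~\ref{thm:LocComIFFFinite} on local compactness. Your two-point map $V_0/k^*\twoheadrightarrow k(x_1)^*$, $\alpha\mapsto\alpha(x_1)\alpha(x_0)^{-1}$, is a nice direct substitute that bypasses the earlier theorem; the trick of pairing the rational point $x_0$ with an arbitrary $x_1$ to kill the diagonal $k^*$ is exactly what is needed and is not in the paper.
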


\begin{proof}
Recall that the $\A_{X}$-topology and the $\I_{X}$-topology coincide on $\I_{X}^{1}$ (Lemma~\ref{lem:IX1top}). That $\Sigma_{X}^{*}$ is closed and discrete then follows immediately from this and Proposition~\ref{prop:Sigmadiscrete} by observing that $\Sigma_{X}^{*}= \Sigma_{X}\cap \I_{X}^{1}$.

For the second part, first assume that $k$ is finite. Fix an effective divisor $D$ with $\deg(D) \geq g$, the genus of $X$. Let us check that there is a continuous surjection
\[ 
	U_{-D}\cap \I_{X}^{1} \to  \I_{X}^{1}/\Sigma_{X}^{*}.
\]
Given $\lambda\in \I_{X}^{1}$, we have $\deg(D+D(\lambda))>g$ and accordingly $H^0(X,\O_{X}((D+D(\lambda)))\neq 0$. That is, there is a non-zero function $f\in\Sigma_{X}^{*}$ such that $D(f)+D+D(\lambda) \geq 0$, which means that
\[
	f\cdot\lambda  \in  U_{-D}. 
\]
Keeping in mind Lemma~\ref{lem:IX1top} and Theorem~\ref{thm:LocComIFFFinite}, one obtains that $U_{-D}\cap \I_{X}^{1}$ is compact, and hence so is $\I_{X}^{1}/\Sigma_{X}^{*}$.

Conversely,  assume that  $\I_{X}^{1}/\Sigma_{X}^{*}$ is compact. Let $D\in\Div(X)$ be a non-zero effective divisor. Since $V_{D}$ is an open subgroup, it is also closed. Since the map $V_{D}\to  \I_{X}^{1}/\Sigma_{X}^{*}$ is injective, it follows that $V_{D}$ is compact. Now, Theorem~\ref{thm:LocComIFFFinite} implies that $k$ is finite. 
\end{proof}

\begin{thm}
\label{T:equivalences}	
For a field $\Omega$ with $\Sigma_{X}\subset \Omega\subset \A_{X}$, the following conditions are equivalent:
\begin{enumerate}


\item\label{T:equivalences:item:dimfin}
$\dim_{k} \Omega\cap\A_{X}^+ <\infty$.

\item\label{T:equivalences:item:discrete}
$\Omega$ is discrete (and therefore closed). 

\item\label{T:equivalences:item:subsetIX1}
$\Omega^{*} \subset \I_{X}^{1}$.

\item\label{T:equivalences:item:discreteIX}
$\Omega^{*} $ is discrete in $\I_{X}$.

\setcounter{enumi}{4}
\item\label{T:equivalences:item:WcapAX+=k} 
$\Omega\cap\A_{X}^+ =k$.

\item\label{T:equivalences:item:W*capAX+=k*}  
$\Omega^{*}\cap \A_{X}^+ =k^{*}$.

\end{enumerate}
\end{thm}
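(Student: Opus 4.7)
The plan is to first verify the within-pair equivalences (1) $\Leftrightarrow$ (2) and (5) $\Leftrightarrow$ (6), then establish the bridge (1) $\Leftrightarrow$ (5) $\Leftrightarrow$ (3) through the auxiliary ring $R := \Omega \cap \A_{X}^{+}$, and finally link (4) to the chain. The first within-pair equivalence is Proposition~\ref{P:discreteClosed} applied to the $k$-vector space $V := \Omega$. The second is tautological: $\Omega$ is a field, so $\Omega^{*} = \Omega \setminus \{0\}$ and consequently $\Omega^{*} \cap \A_{X}^{+} = (\Omega \cap \A_{X}^{+}) \setminus \{0\}$.

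The heart of the argument is the chain (1) $\Leftrightarrow$ (5) $\Leftrightarrow$ (3), organized around $R$, which is an integral domain containing $k$ (being a subring of the field $\Omega$). Under (1), $R$ is a finite-dimensional $k$-algebra, hence a field; under (3), every nonzero $f \in R$ lies in $\I_{X}^{1}$ with $v_{x}(f) \geq 0$, so the zero-degree condition forces $v_{x}(f) = 0$ for all $x$, and once again $R$ is a field. In either case, projection to the $k$-rational point $x_{0}$ yields a ring homomorphism $R \to \widehat{\O}_{X, x_{0}} = k[[t_{x_{0}}]]$ which is injective because $R$ is a field. The key observation is that the only subfield of $k[[t]]$ containing $k$ is $k$ itself: otherwise some $f \in R \setminus k$ would give $f - f(0)$ nonzero with $v_{x_{0}}(f - f(0)) \geq 1$, and its inverse in $R$ would need to have negative $t$-adic valuation, an impossibility in $k[[t]]$. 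Hence $R = k$, yielding (5). The converse (5) $\Rightarrow$ (1) is trivial. To close the triangle, (5) $\Rightarrow$ (3) uses Riemann-Roch: every nonzero $f \in \Omega$ is automatically in $\I_{X}$ (its $\Omega$-inverse lies in $\A_{X}$), and to show $\deg D(f) = 0$ I assume WLOG $d := \deg D(f) \geq 0$; for $n$ sufficiently large there is a nonzero $h \in L(n D(f)) \subseteq \Sigma_{X}^{*}$, so $h f^{n} \in \Omega \cap \A_{X}^{+} = k^{*}$, whence $f^{n} \in \Sigma_{X}^{*}$ and $n d = \deg D(f^{n}) = 0$.

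Finally, the easy direction (1) $\Rightarrow$ (4) combines (2) (so $\Omega^{*}$ is discrete in $\A_{X}$) with (3) (so $\Omega^{*} \subseteq \I_{X}^{1}$), together with Lemma~\ref{lem:IX1top} (the $\A_{X}$- and $\I_{X}$-topologies coincide on $\I_{X}^{1}$) and the openness of $\I_{X}^{1}$ in $\I_{X}$. The reverse (4) $\Rightarrow$ (2) is the principal obstacle. From $V_{D} \cap \Omega^{*} = \{1\}$ for some effective divisor $D$, I aim to prove $\Omega \cap U_{D'} = \{0\}$ for a suitable $D'$: for a hypothetical nonzero $g \in \Omega \cap U_{D}$, the translates $1 + c g \in \Omega^{*}$ for $c \in k^{*}$ automatically satisfy the $\supp D$-conditions of $V_{D}$, so landing in $V_{D}$ reduces to choosing $c$ with $c \neq -1/g(x)$ for every $x \notin \supp D$ having $g(x) \in k^{*}$; such a $c$ would force $1 + c g = 1$ and hence $g = 0$, a contradiction. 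The hard part will be showing that the forbidden scalars form a proper subset of $k^{*}$: this requires exploiting that $g$ lies in the field $\Omega$ (not an arbitrary adele)---possibly after enlarging $D$ to absorb exceptional points where $g$ takes $k$-rational values---and the cases of finite and infinite base field $k$ will likely require separate treatment, with the former benefiting from the local compactness of $\I_{X}$ provided by Theorem~\ref{thm:LocComIFFFinite}.
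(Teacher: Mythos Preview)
Your arguments for $(1)\Leftrightarrow(2)$, $(5)\Leftrightarrow(6)$, the chain $(1)\Leftrightarrow(3)\Leftrightarrow(5)$, and $(1)\Rightarrow(4)$ are correct and close to the paper's, with only cosmetic differences: you unify $(1)\Rightarrow(5)$ and $(3)\Rightarrow(5)$ through ``$R=\Omega\cap\A_X^+$ is a field embedding in $k[[t_{x_0}]]$, hence $R=k$,'' whereas the paper shows that each nonzero $\omega\in R$ has $v_x(\omega)=0$ (else its powers are independent) and then $R\hookrightarrow k(x_0)=k$; your $(5)\Rightarrow(3)$ via Riemann--Roch is exactly the contrapositive of the paper's $(1)\Rightarrow(3)$.

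The genuine gap is $(4)\Rightarrow(2)$, and the forbidden-scalar approach will not close it. Over algebraically closed $k$ every closed point is rational, and since a nonzero $g\in\Omega\cap U_D$ lies in $\Omega^*\subset\I_X$ it has only finitely many zeros; hence the set $\{-1/g(x):x\notin\supp D,\ g(x)\ne 0\}$ is indexed by \emph{all but finitely many} closed points of $X$. Because $g$ sits in an abstract intermediate field $\Omega\subset\A_X$ with no further constraint on its residues, nothing prevents this set from exhausting $k^*$, and enlarging $D$ removes only finitely many points at a time. The paper avoids this difficulty entirely by routing through $(4)\Rightarrow(3)$: once $\Omega^*\subset\I_X^1$ is known, any nonzero $\alpha\in\Omega\cap(1+U_D)$ automatically has $D(\alpha)\ge 0$ and $\deg D(\alpha)=0$, hence $D(\alpha)=0$, so $\alpha\in V_0$ and the congruences on $\supp D$ place $\alpha\in V_D$; thus $\Omega\cap(1+U_D)=\Omega^*\cap V_D=\{1\}$ on the nose, with no scalar to choose. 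The paper asserts $(4)\Rightarrow(3)$ as known without supplying an explicit argument, so that step is where you should focus rather than trying to control the residues of $g$.
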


\begin{proof}
\leavevmode

\begin{itemize}

\item $\eqref{T:equivalences:item:dimfin}\iff \eqref{T:equivalences:item:discrete}$: This follows from Proposition~\ref{P:discreteClosed}.

\item $\eqref{T:equivalences:item:dimfin}\implies \eqref{T:equivalences:item:subsetIX1}$:
Suppose that $\Omega^{*} \not\subset  \I_{X}^1$. Choose $\omega\in\Omega^{*}$ such that $\deg(\omega)>0$. There exists $f\in \Sigma_{X}^{*}$ and $n>0$ such that the divisor of $f\omega^n$ is effective, i.e. $f\omega^n \in \A_{X}^+$. Since $k[f\omega^n]\subseteq \Omega\cap \A_{X}^+$ is not finite-dimensional, e.g. because the powers $(f \omega^{n})^{i}$ for $i \geq 0$ are independent over $k$, we get a contradiction. 

\item $\eqref{T:equivalences:item:subsetIX1}\implies \eqref{T:equivalences:item:discreteIX}$:
Since $\{1\}$ is closed in $\Omega^{*}$, discreteness is equivalent to $\{1\}$ being open, i.e. that there exists an effective divisor $D$ such that $\Omega^{*}\cap V_{D}=\{1\}$. If this is not the case, then for all $D > 0$ there is an idele $\omega_{D}\in (\Omega^{*}\cap V_{D}) \setminus \{1\}$. Since $\Omega$ is a field, $1-\omega_{D} \in \Omega^{*}$. On the one hand, $1-\omega_{D}$ has zeros in the support of $D$, but on the other hand, it has no poles because $\omega_{D}\in V_{D}$. The hypothesis implies that $1-\omega_{D}=0$, which is a contradiction. 

\item $\eqref{T:equivalences:item:discreteIX}\implies \eqref{T:equivalences:item:discrete}$:
We know that $\eqref{T:equivalences:item:discreteIX}$ implies $\eqref{T:equivalences:item:subsetIX1}$, and the topology of $\I_{X}^1$ is induced from $\A_{X}$. It is easy to check that $\Omega \cap (1+U_{D}) = \Omega^{*} \cap V_{D}=\{1\}$, which shows that  $\{1\}$ is open in $\Omega\subset \A_{X}$. Thus $\Omega$ is discrete.

\item $\eqref{T:equivalences:item:WcapAX+=k}\implies \eqref{T:equivalences:item:W*capAX+=k*}$:
This is obvious since $\Omega^{*} \cap \A_{X}^+ \subset \Omega\cap \A_{X}^+ =k$ and $\Omega$ is a field.

\item $\eqref{T:equivalences:item:W*capAX+=k*}\implies \eqref{T:equivalences:item:WcapAX+=k}$:
Assume that $\omega\in \Omega\cap \A_{X}^+$  with $\omega\notin k$.  Since $\Omega$ is a field, we have $\omega\in\Omega^{*}$ and thus $\omega\in\Omega^{*}\cap \A_{X}^+=k^{*}$, a contradiction.

\item  $\eqref{T:equivalences:item:WcapAX+=k}\implies \eqref{T:equivalences:item:dimfin}$: Trivial.

%

\item $\eqref{T:equivalences:item:dimfin} \implies \eqref{T:equivalences:item:WcapAX+=k}$:
If $\dim_{k} \Omega\cap\A_{X}^+ <\infty$ then, given a point $x \in X$, for $n \gg 0$, $\Omega \cap \A_{X}^{+}$ embeds in $\widehat{\O}_{X,x}/\m_{x}^{n}$. 
We claim that therefore $\Omega \cap \A_{X}^{+}$ actually embeds in the residue field $k(x) = \widehat{\O}_{X,x}/\m_{x}$. Indeed, for any $\omega \in \Omega^{*} \cap \A_{X}^{+}$, we must have $v_{x}(\omega_{x}) = 0$, since otherwise the powers $\omega^{i}$ would be $k$-linearly independent. Therefore $\Omega^{*} \cap \A_{X}^{+}$ embeds in $k(x)^{*}$. Since this holds for any $x$, and $k \subseteq \Omega \cap \A_{X}^{+}$, the assumption that $k$ is algebraically closed in $\Sigma_{X}$ allows us to conclude that $\Omega \cap \A_{X}^{+} = k$.
\qedhere
\end{itemize}
\end{proof}

\section{Commutator pairings on ideles}
\label{subsec:pairings}

In this section, using the local symbol (\cite{Se}) and the Weil reciprocity law (\cite{Weil}), we associate to $X$ a topological subgroup of $\I_X^1/\Sigma_{X}^*$.  In  the forthcoming sections \S\ref{sec:TopPerp} and~\S\ref{sec:relation to the picard group}, it will be shown that this subgroup encodes arithmetic properties of the base field $k$ and also of the curve.

\subsection{Local and global symbols}
\label{subsec:local and global symbols}

\begin{defn}
\label{D:localSymbol}
Let $X$ be a smooth, complete and connected curve over a perfect base field $k$. For a closed point $x \in X$ with associated valuation $v_{x}$, completion $K_{x}$, and residue field $k(x)$, the \emph{local symbol at $x$} is given by
\begin{equation}
\label{E:tameSymbol}
	  \localsymbol{f,g}_{x}
	= (-1)^{\deg(x)\cdot v_{x}(f)\cdot v_{x}(g)}
	  \norm*{\frac{f^{v_{x}(g)}}{g^{v_{x}(f)}}(x)},
\end{equation}
where $f,g\in K_{x}^{*}$ and $\normSymbol:k(x)\to k$ denotes the norm map (applied to the residue classes).
\end{defn}

A few words regarding Definition~\ref{D:localSymbol} are in order. For $k$  algebraically closed, Serre in~\cite{Se} defines the local symbol as the pairing $\Sigma_{X}^{*}\times \Sigma_{X}^{*}  \longrightarrow k^{*}$ given by
\[
	\localsymbol{f,g}_{x} = (-1)^{v_{x}(f)v_{x}(g)} \frac{f^{v_{x}(g)}}{g^{v_{x}(f)}}(x).
\]
Keeping in mind that $\Sigma_{X}^{*} \hookrightarrow K_{x}^{*}$, both definitions coincide in this case.

\begin{defn}
\label{D:globalSymbol}
We define a global pairing
\[
	\globalsymbol{} : \I_{X} \times \I_{X} \to k^{*}
\]
by the following formula:
\begin{equation}
\label{eq:explicitPairI1}
    \begin{aligned}
		  \globalsymbol{\alpha,\beta}
		&= \prod_{x \in X} \localsymbol{\alpha_{x},\beta_{x}}_{x}
		 = \prod_{x\in X} (-1)^{\deg(x)\cdot v_{x}(\alpha_{x}) \cdot v_{x}(\beta_{x})}
		   \norm[\bigg]{\frac {\alpha^{v_{x}(\beta_{x})}}{\beta^{v_{x}(\alpha_{x})}}(x)},
	\end{aligned}
\end{equation}
where for an idele $\lambda$ and a closed point $x$, $\lambda(x)$ is the residue class of $\lambda_{x}$ in $k(x)$.
\end{defn}

Both the definitions of the local and global pairings which we give here are in fact separate constructions (see~\cite{MP,Pa} for details) derived from commutator pairings associated to certain central extensions. Thus formula ~\eqref{eq:explicitPairI1} is in fact a theorem, not really a definition. In any case, in view of these facts, the global pairing satisfies 
\begin{equation}
\label{E:propsSymbol}
\begin{aligned}
	   \globalsymbol{\alpha_{1} \alpha_{2},\beta}
	&= \globalsymbol{\alpha_{1}, \beta} \cdot \globalsymbol{\alpha_{2}, \beta},
	\\
	   \globalsymbol{\alpha, \beta_{1} \beta_{2}}
	&= \globalsymbol{\alpha, \beta_{1}} \cdot \globalsymbol{\alpha, \beta_{2}},
	\\
	   \globalsymbol{\alpha,\beta} \cdot \globalsymbol{\beta,\alpha}
	&= 1,
	\\
	   \globalsymbol{\alpha,-\alpha} 
	&= 1,
	\\
	   \globalsymbol{\alpha,\alpha} 
	&= \globalsymbol{\alpha,-1},
	\\
	   \globalsymbol{\alpha,1-\alpha} 
	&= 1,
	\quad\text{if $\alpha, 1-\alpha \in \I_{X}$,}
	\\
	   \globalsymbol{\alpha,\alpha} 
	&= 1, 
	\quad\text{if $\alpha \in \I_{X}^{1}$,}
\end{aligned}
\end{equation}
which is to say, it is a symbol in the sense of algebraic $K$-theory (see for instance Tate~\cite{TaSym}).
Moreover, the Weil reciprocity law corresponds to the statement that
\begin{equation}
\label{E:WeilRec}
	\globalsymbol{f,g}  = \prod_{x\in X} \localsymbol{f,g}_{x} = 1 \text{ for all }
	f,g \in \Sigma_{X}^{*},
\end{equation}
where $\Sigma_{X}^{*}$ is embedded diagonally in $\I_{X}$, i.e. triviality of the global pairing on $\Sigma_{X}^{*}$.

\begin{rem}
In a similar way as Tate~(\cite{Ta}) deduced the residue theorem from the properties of a cocycle, we now rely on the properties of local symbols to obtain our results. The consequences of local symbols have been widely studied (\cite{AP,ACK}) and, among them, it note the proof of the Weil reciprocity law given in \cite{MP}. Indeed, the ideas of  \cite{MP} together with the techniques of \cite{Colle} might be used to study analogues for arithmetic curves. Furthermore,  Artin and Whaples (\cite{ArtinW}) characterized certain fields in terms of a \textit{product formula} for valuations; however, they need archimedean valuations in the case of function fields, while we consider non-archimedean valuations. Finally, the relation of our results to the classical approach of class field theory in terms of $K$-theory (e.g. \cite{TaSym}) deserves further research. 
\end{rem}

\begin{prop}
\label{P:locConst}
Each $(\alpha_{0},\beta_{0}) \in \I_{X} \times \I_{X}$, has a clopen neighborhood on which the global pairing $\globalsymbol{}$ is constant.
\end{prop}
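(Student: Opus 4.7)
The plan is to exploit the fact that $\globalsymbol{\alpha,\beta} = \prod_{x} \localsymbol{\alpha_{x}, \beta_{x}}_{x}$ is effectively a finite product on any given input: by formula~\eqref{E:tameSymbol}, if both $\alpha_{0,x}$ and $\beta_{0,x}$ are units of $\widehat{\O}_{X,x}$ then $\localsymbol{\alpha_{0,x}, \beta_{0,x}}_{x} = \normSymbol(1) = 1$. So the strategy is to isolate the finitely many ``bad'' places and shrink the neighborhood just enough to freeze the local symbol at each of them.

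First I would let $S \subset X$ be the finite set of closed points where $v_{x}(\alpha_{0,x}) \neq 0$ or $v_{x}(\beta_{0,x}) \neq 0$, and let $D = \sum_{x \in S} x$ be the reduced effective divisor supported on $S$. As candidate clopen neighborhood of $(\alpha_{0}, \beta_{0})$ I would take $W := (\alpha_{0} V_{D}) \times (\beta_{0} V_{D})$. This is clopen in $\I_{X} \times \I_{X}$ because $V_{D}$ from~\eqref{E:basisIdelesVD} is a clopen subgroup of $\I_{X}$ and left-translation by an idele is a self-homeomorphism of $\I_{X}$.

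Next I would verify that for every $(\alpha, \beta) = (\alpha_{0} a, \beta_{0} b) \in W$ each local symbol at $x$ agrees with its value at $(\alpha_{0}, \beta_{0})$. For $x \notin S$ one has $a_{x}, b_{x} \in \widehat{\O}_{X,x}^{*}$ and also $\alpha_{0,x}, \beta_{0,x} \in \widehat{\O}_{X,x}^{*}$, so both symbols equal $1$. For $x \in S$, $a_{x}, b_{x} \in 1 + \m_{x}$, so $v_{x}(\alpha_{x}) = v_{x}(\alpha_{0,x})$ and $v_{x}(\beta_{x}) = v_{x}(\beta_{0,x})$, which makes the sign factor $(-1)^{\deg(x) \cdot v_{x}(\alpha_{x}) \cdot v_{x}(\beta_{x})}$ coincide. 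For the norm factor, since $1 + \m_{x}$ is a multiplicative subgroup of $\widehat{\O}_{X,x}^{*}$, any integer power of $a_{x}$ or $b_{x}$ remains in $1 + \m_{x}$ and so reduces to $1$ in $k(x)$; hence the residue at $x$ of $\alpha_{x}^{v_{x}(\beta_{x})}/\beta_{x}^{v_{x}(\alpha_{x})}$ coincides with the corresponding residue computed from $(\alpha_{0}, \beta_{0})$, and therefore so do the norms.

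The main technical point to be careful about is handling the possibly negative integer exponents $v_{x}(\alpha_{0,x})$ and $v_{x}(\beta_{0,x})$ in the norm factor, which is exactly where one needs $1 + \m_{x}$ to be a group (closed under inverses) and not merely under multiplication; taking multiplicity $n_{x} = 1$ in $D$ suffices precisely because principal units form a group. Once these pointwise equalities are established, formula~\eqref{eq:explicitPairI1} yields $\globalsymbol{\alpha, \beta} = \globalsymbol{\alpha_{0}, \beta_{0}}$ for all $(\alpha, \beta) \in W$, which is the desired local constancy.
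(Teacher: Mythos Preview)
Your proof is correct and arrives at the same clopen neighborhood $\alpha_{0}V_{D}\times\beta_{0}V_{D}$ as the paper, but the verification is organized differently. The paper first isolates the lemma that $\globalsymbol{\gamma,\beta_{0}}=1$ for all $\gamma\in V_{D}$ (with $D$ effective and $D\geq D(\beta_{0})$), then expands $\globalsymbol{\alpha_{0}\gamma,\beta_{0}\delta}$ by bilinearity into four factors and kills three of them using that lemma and its symmetric counterpart. You instead bypass bilinearity entirely and work prime by prime with the explicit formula~\eqref{E:tameSymbol}, checking that each local symbol is unchanged; this lets you take the reduced divisor $D=\sum_{x\in S}x$ rather than one dominating $D(\alpha_{0})$ and $D(\beta_{0})$. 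Your route is slightly more hands-on and uses only the explicit formula, while the paper's route leans on the symbol axioms~\eqref{E:propsSymbol}; both are short and either would be fine here.
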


\begin{proof}
We begin by showing that, given $\beta_{0} \in \I_{X}$, there is a neighborhood $V_{D}$ of $1$ of the form~\eqref{E:basisIdelesVD} such that $\globalsymbol{\alpha,\beta_{0}} = 1$ for all $\alpha \in V_{D}$. Indeed, let $D(\beta_{0})$ be the divisor associated to $\beta_{0}$ by~\eqref{eq:IXaDivisor} and choose an effective divisor $D \geq D(\beta_{0})$. Since $V_{D} \subseteq V_{0}$, for $\alpha \in V_{D}$ we have $v_{x}(\alpha_{x}) = 0$ at all $x$, thus
\[
	  \localsymbol{\alpha,\beta_{0}}_{x}
	= \norm[\big]{\alpha(x)^{v_{x}(\beta_{0,x})}}.
\]
If $x \in \supp(D)$ then $\alpha_{x} \in 1 + \m_{x}$ and if $x \notin \supp(D)$ then also $x \notin \supp D(\beta_{0})$ and so $v_{x}(\beta_{0,x}) = 0$. In either case the above norm is $1$, hence $\globalsymbol{\alpha,\beta_{0}} = \prod_{x} 1 = 1$.

In general, the multiplicativity of the global pairing implies that
\[
	  \globalsymbol{\alpha_{0}\gamma,\beta_{0}\delta}
	= \globalsymbol{\alpha_{0},\beta_{0}} \globalsymbol{\alpha_{0},\delta}
	  \globalsymbol{\gamma,\beta_{0}} \globalsymbol{\gamma,\delta},
\]
and thus $\globalsymbol{\alpha,\beta} = \globalsymbol{\alpha_{0},\beta_{0}}$ for $(\alpha,\beta) \in \alpha_{0}V_{D} \times \beta_{0}V_{D}$ where $D \geq 0,D(\alpha_{0}), D(\beta_{0})$.
\end{proof}

\subsection{Orthogonal complements}
\label{subsec:orthogonal complements}

In what follows, we will focus on the global pairing $\globalsymbol{}$ and in particular on its restriction to $\I_{X}^{1}$. We will study the various related notions of orthogonality that arise with respect to this pairing, beginning with the radicals. Consider the radical of the pairing $\globalsymbol{}$ on $\I_{X}^{1}$,
\[
	R^{1} := \big\{\alpha \in \I_{X}^{1} \:\vert\: \globalsymbol{\alpha,\beta} = 1 \  \forall\beta\in \I_{X}^{1}\big\}.
\]

\begin{thm}
\label{T:rad1}
\leavevmode{}
\begin{itemize}

\item $k^{*}\cdot \prod_{x\in X}\big((1+\m_{x})N_{x}\big) = R^{1} \cap V_{0}$, where $N_{x}:=\{ \lambda\in k(x) \:\vert\: \norm{\lambda}=1\}$. 

\item If $k$ is infinite, then $R^{1} \subseteq V_{0}$.

\item $k$ is algebraically closed if and only if  $R^{1} =k^{*}\prod_{x\in X}(1+\m_{x})$,

\end{itemize}

\end{thm}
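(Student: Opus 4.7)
The plan is to prove the three parts in order, using explicit computations with the local symbol on $V_{0}$ for parts (1)--(2), and combining these with a geometric argument for part (3).

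For part (1), I observe that for $\alpha \in V_{0}$ every valuation $v_{x}(\alpha_{x})$ vanishes, so the local symbol collapses to $\localsymbol{\alpha_{x},\beta_{x}}_{x} = \norm{\alpha(x)}^{v_{x}(\beta_{x})}$ for any $\beta \in \I_{X}$. Writing $a_{x} := \norm{\alpha(x)} \in k^{*}$, the condition $\alpha \in R^{1}\cap V_{0}$ becomes $\prod_{x} a_{x}^{v_{x}(\beta_{x})} = 1$ for every $\beta \in \I_{X}^{1}$. Since the map $\I_{X}^{1} \to \Div^{0}(X)$, $\beta \mapsto D(\beta)$, is surjective (any degree-zero divisor is realized by an idele built from uniformizer powers), this is equivalent to the homomorphism $\Div(X) \to k^{*}$, $\sum n_{x} x \mapsto \prod a_{x}^{n_{x}}$, vanishing on $\Div^{0}(X)$; that in turn amounts to its factoring through $\deg$, yielding some $c \in k^{*}$ with $a_{x} = c^{\deg(x)}$ for every $x$. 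Then $\alpha(x)/c \in N_{x}$, and lifting $\alpha(x)/c$ from $k(x)^{*}$ to $\widehat{\O}_{X,x}^{*}$ writes $\alpha$ as an element of $k^{*}\prod_{x} (1+\m_{x})N_{x}$. The reverse containment is a direct computation using $\norm{c} = c^{\deg(x)}$ for $c \in k^{*}$ and $\sum_{x}\deg(x)v_{x}(\beta_{x}) = 0$.

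For part (2), I argue contrapositively. If $\alpha \in \I_{X}^{1}\setminus V_{0}$, pick $x_{0}$ with $n := v_{x_{0}}(\alpha_{x_{0}}) \neq 0$ and test the pairing against the idele $\beta \in V_{0}\cap \I_{X}^{1}$ whose only nontrivial component is $\beta_{x_{0}}$, a lift of some $t \in k^{*}$. All factors but the one at $x_{0}$ trivialise and $\globalsymbol{\alpha,\beta} = t^{-n\deg(x_{0})}$; since $k^{*}$ is infinite, $t$ can be chosen so that this value is not $1$, and hence $\alpha \notin R^{1}$.

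For part (3), the forward implication is immediate: $k$ algebraically closed forces $k(x) = k$ and thus $N_{x} = \{1\}$ at every $x$, so (2) gives $R^{1}\subseteq V_{0}$ and (1) gives $R^{1} = k^{*}\prod(1+\m_{x})$. Conversely, assume $R^{1} = k^{*}\prod(1+\m_{x})$. I first rule out $k$ finite: in that case a smooth curve over $\mathbb{F}_{q}$ has a closed point $x$ with $\deg(x) > 1$, for which $\lvert N_{x}\rvert = (q^{\deg(x)}-1)/(q-1) > 1$; picking $\lambda \in N_{x}\setminus\{1\}$ and the idele supported at $x$ with value $\lambda$ produces an element of $R^{1}\cap V_{0}$ (by (1)) that is not in $k^{*}\prod(1+\m_{y})$, a contradiction. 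Hence $k$ is infinite, so by (1) and (2) we get $R^{1} = k^{*}\prod(1+\m_{x})N_{x}$, and the assumed equality forces $N_{x} = \{1\}$ for every $x$ (by the same one-point-supported idele test). For infinite $k$ and a nontrivial finite separable extension $L/k$ the kernel of $N_{L/k}$ is nontrivial (Hilbert~90 applied to cyclic subextensions of the Galois closure, or the fact that the norm-one torus has positive dimension), so $N_{x} = \{1\}$ compels $k(x) = k$ for every closed $x$. Finally, a non-constant morphism $f\colon X \to \Ps^{1}_{k}$ (which exists because $\Sigma_{X}$ is transcendental over $k$) is surjective on $\bar{k}$-points, and the hypothesis that every closed point of $X$ is $k$-rational forces every $\bar{k}$-point of $X$ to factor through a $k$-point; hence $\Ps^{1}(\bar{k}) = \Ps^{1}(k)$, so $\bar{k} = k$.

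The hard part will be the converse of (3): it requires both the non-triviality of norm kernels for nontrivial separable extensions of infinite fields (to get $k(x) = k$ for every $x$) and the geometric step that passes from ``every closed point of $X$ is $k$-rational'' to ``$k$ is algebraically closed'' via surjectivity on $\bar{k}$-points of a morphism to $\Ps^{1}$. The remaining arguments are routine bookkeeping with the explicit formula for the local symbol on $V_{0}$.
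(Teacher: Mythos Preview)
Your proof is correct and follows essentially the same strategy as the paper's: compute the global pairing explicitly on $V_{0}$, then test against well-chosen ideles. In part~(1) you repackage the paper's explicit test ideles (built from a rational point $x_{0}$ and an arbitrary $x_{1}$) as the single observation that the induced homomorphism $\Div(X)\to k^{*}$ must factor through $\deg$; this is equivalent and arguably cleaner, though it still tacitly uses the rational point to get $\deg$ surjective onto $\Z$. Part~(2) is identical to the paper's argument. In part~(3) you are more careful than the paper: you explicitly dispose of the finite-field case, and you spell out both why $N_{x}=\{1\}$ forces $k(x)=k$ (the paper just cites~[Ono]) and why ``all closed points rational'' forces $k=\bar{k}$ (the paper simply asserts this). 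The one place to tighten is your invocation of Hilbert~90 on ``cyclic subextensions of the Galois closure''---that phrasing does not directly produce a norm-one element of $k(x)$ itself; your alternative via the positive-dimensional norm-one torus (which is unirational, hence has Zariski-dense $k$-points over an infinite field) is the argument that actually works.
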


\begin{proof}
It is  straightforward to verify the inclusion $k^{*}\cdot \prod_{x\in X}\big((1+\m_{x})N_{x}\big) \subseteq R^{1}$ directly from the definitions and the fundamental relation~\eqref{eq:explicitPairI1}. For instance, for $c \in k^{*}$ we have
\[
	  \globalsymbol{c,\beta}
	= c^{\deg D(\beta)}
	= c^{\sum_{x} \deg(x) v_{x}(\beta_{x})}
\]
and the exponent is null precisely when $\beta \in \I_{X}^{1}$.

Let us now see that existence of a rational point $x_{0}$ yields equality.
Since $k(x_{0})=k$ and we have already observed that $k^{*}$ is contained in the
radical, we may assume that $\alpha_{x_{0}}= u_{x_{0}}\in 1+\m_{x_{0}}$. Now,
choose an arbitrary point $x_{1}$ and write $\alpha_{x_{1}} = c_{1} \cdot u_{x_{1}}$
with $c_{1} \in k(x_{1})^{*}$. It suffices to show that $N_{k(x_{1})/k}(c_{1})=1$. Let
us define
\[
    \beta=(\beta_{x})=\begin{cases}
    t_{x_{0}}^{\deg(x_{1})} & \text{ if } x=x_{0},
	                        \\
    t_{x_{1}}^{-1}          & \text { if } x=x_{1},
	                        \\
    1                       & \text{ if } x\not=x_{0},x_{1}.
    \end{cases}
\]
Recalling~\eqref{eq:IXaDivisor}, note that $\deg(D(\beta))=\deg(\deg(x_{1})x_{0} - x_{1})= \deg(x_{1})-\deg(x_{1})=0$, and thus $\beta\in\I_{X}^{1}$. Now $1= \globalsymbol{\alpha,\beta}=N_{k(x_{1})/k}(c_{1})^{-1}$ and the result follows.

We now check that when $k$ is infinite, $R^{1} \subseteq V_{0}$, i.e. for $\alpha \in R^{1}$, $v_{x}(\alpha_{x})=0$ for all $x\in X$. Suppose to the contrary that $\alpha \in R^{1}$ but $v_{x}(\alpha_{x}) \neq 0$ for some $x$. Write $\alpha_{x}=c_{x} \cdot u_{x} \cdot t_{x}^{m_{x}}$, where $t_{x}$ is a local parameter at $x\in X$, $c_{x} \in k(x)^{*} \hookrightarrow \widehat{\O}_{X,x}^{*}$, $u_{x}\in 1+\m_{x}$ and $m_{x} = v_{x}(\alpha_{x}) \in \Z$. Since $k$ is infinite, there is certainly some $\lambda \in k(x)^{*}$ (indeed, in $k^{*}$ itself) with $\norm{\lambda}^{m_{x}} \neq 1$. Consider $\beta \in \I_{X}^{1}$ given by
\[
\beta=(\beta_{y})=\begin{cases}
        \lambda &\text{ if } y=x,
		        \\
        1       &\text{ if } y\not=x.
        \end{cases}
\]
Then $\globalsymbol{\alpha,\beta} = \norm{\lambda}^{-m_{x}} \neq 1$, which contradicts our assumption. Hence $v_{x}(\alpha_{x})=0$ for all $x\in X$.

It remains to show that $R^{1} =k^{*}\prod_{x\in X}(1+\m_{x})$ if and only if $k$ is algebraically closed. If $k$ is algebraically closed, the norm one groups $N_{x}$ are trivial. Conversely, if the equality holds, then $N_{x}$ is trivial for all (closed) $x \in X$. This implies that $k(x) = k$ for all $x$ (see~\cite{Ono} for a general characterization of the kernel of the norm map in a separable extension), i.e., every closed point is rational, and hence $k$ must be algebraically closed.
\end{proof}

Consider now the radical of the pairing $\globalsymbol{}$ on $\I_{X}$,
\[
R := \big\{\alpha \in \I_{X} \:\vert\: \globalsymbol{\alpha,\beta} = 1 \  \forall\beta\in \I_{X}\big\}. 
\]

\begin{thm}
\label{T:rad}\leavevmode
\begin{itemize}
	
	\item $ \prod_{x\in X}\big((1+\m_{x})N_{x}\big) = R \cap V_{0}$.
	
	\item If $k$ is infinite, then $R \subseteq V_{0}$.
	
	\item $k$ is algebraically closed if and only if  $R =\prod_{x\in X}(1+\m_{x})$,
	
\end{itemize}

\end{thm}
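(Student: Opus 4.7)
The plan is to mirror the proof of Theorem~\ref{T:rad1} almost verbatim, exploiting the inclusion $R\subseteq R^{1}$. The essential difference is that $k^{*}$ no longer sits inside the radical: indeed, for $c\in k^{*}$ and any $\beta\in\I_{X}$ one has $\globalsymbol{c,\beta}=c^{\deg D(\beta)}$, and $\I_{X}$ contains ideles of arbitrary degree, so a nontrivial $c\in k^{*}$ cannot be orthogonal to all of $\I_{X}$. Thus the constant factor $k^{*}$ appearing in the description of $R^{1}$ simply drops out, and the argument is in fact slightly shorter than for $R^{1}$ because the test ideles used need not have degree zero.

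For the first item, the inclusion $\prod_{x}\bigl((1+\m_{x})N_{x}\bigr)\subseteq R\cap V_{0}$ is immediate from~\eqref{eq:explicitPairI1}: if $v_{x}(\alpha_{x})=0$ and $\norm{\alpha(x)}=1$ for every $x$, then each local factor reduces to $\norm{\alpha(x)}^{v_{x}(\beta_{x})}=1$. For the reverse inclusion, given $\alpha\in R\cap V_{0}$, write $\alpha_{x}=c_{x}u_{x}$ with $c_{x}\in k(x)^{*}$ and $u_{x}\in 1+\m_{x}$, fix any closed point $x_{1}$, and test against the single-point-supported idele $\beta\in\I_{X}$ with $\beta_{x_{1}}=t_{x_{1}}^{-1}$ and $\beta_{y}=1$ for $y\neq x_{1}$. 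Because we no longer require $\beta\in\I_{X}^{1}$, no compensating uniformizer at an auxiliary rational point is needed; the pairing collapses to $\norm{c_{x_{1}}}^{-1}$, which must equal $1$, giving $c_{x_{1}}\in N_{x_{1}}$. The second item then transfers word for word from the corresponding part of Theorem~\ref{T:rad1}, since the counterexample idele constructed there is already supported at a single point and thus lies in $\I_{X}$.

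For the equivalence in the third item, if $k$ is algebraically closed then it is infinite, so $R\subseteq V_{0}$ by the second item, and each $N_{x}=\{1\}$ since $k(x)=k$; the first item then yields $R=\prod_{x}(1+\m_{x})$. Conversely, the equality forces $R\subseteq V_{0}$, whence $\prod_{x}(1+\m_{x})=R\cap V_{0}=\prod_{x}\bigl((1+\m_{x})N_{x}\bigr)$ implies $N_{x}=\{1\}$ for all $x$; by the characterization of the norm kernel cited in the proof of Theorem~\ref{T:rad1}, this yields $k(x)=k$ for every closed point. The only step beyond transcription from Theorem~\ref{T:rad1} is the final implication that rationality of every closed point of $X$ forces $k$ itself to be algebraically closed, and this is the main (mild) obstacle: it can be handled by picking any non-constant $f\in\Sigma_{X}$, producing a finite morphism $X\to\Ps^{1}_{k}$ that is surjective on closed points, so that every closed point of $\Ps^{1}_{k}$ would be rational, forcing $k=\bar k$.
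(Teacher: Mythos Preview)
Your proof is correct and follows the same approach as the paper, which simply states that the argument is analogous to Theorem~\ref{T:rad1} with the factor $k^{*}$ absent. Your write-up is in fact more detailed than the paper's one-line proof, and your observation that the test idele can be supported at a single point (since it need not lie in $\I_{X}^{1}$) is a natural streamlining of the corresponding step in Theorem~\ref{T:rad1}.
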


\begin{proof}
The proof is analogous to that of Theorem~\ref{T:rad1}, except that the factor $k^{*}$ is now absent.
\end{proof}

\begin{rem}
If $k$ is a finite field, then the radical $R$ may contain elements of non-zero degree.  
\end{rem}

\begin{defn}
For a subset $G\subset {\I}_{X}^{1}$, we define the ``orthogonal complement'' of $G$ in $\I_{X}^{1}$ by
\[
    G^{\perp}  := 
    \{ \alpha\in {\I}_{X}^{1} \:\vert\: \globalsymbol{\alpha,g}=1 \ \forall g\in G\}.
\]
\end{defn}

Since $\prod(1+{\m}_{x})$ is contained in the radical $R$ of the pairing $\globalsymbol{}$, there is an induced pairing on the quotient $\bar{\I}_{X}^{1}= \I_{X}^{1} /\prod(1+{\m}_{x})$, which we will denote in the same way, and with respect to which we may also study orthogonal complements. Note that the induced pairing is also locally constant by Proposition~\ref{P:locConst}.

\begin{defn}
For $G\subseteq \bar{\I}_{X}^{1}$, we define its orthogonal complement in $\bar{\I}_{X}^{1}$ by
\[
	G^{\pperp}:= 
	\{ \alpha\in \bar{\I}_{X}^{1} \:\vert\: \globalsymbol{\alpha,g}=1 \ \forall g\in G\}.
\]
\end{defn}

\begin{prop}
\label{P:propsPerp}
\leavevmode
\begin{enumerate}

\item For any subset $G \subseteq \I_{X}^{1}$, the orthogonal complement $G^{\perp}$ is a closed subgroup of $\I_{X}^{1}$ containing the radical $R^{1}$.

\item $\I_{X}^{1} \subseteq (k^{*})^{\perp}$, with equality when $k$ is infinite.
In the latter case, if $k^{*} \subseteq G \subseteq \I_{X}$ and $\alpha \in \I_{X}$ satisfies $\globalsymbol{\alpha,g}=1 \ \forall g\in G$, then actually $\alpha \in \I_{X}^{1}$.

\item If $p:\I_{X}^{1}\to \bar{\I}_{X}^{1}$ is the projection map, then for any subset $G \subseteq \bar{\I}_{X}^{1}$ we have $p^{-1}(G^{\pperp}) = p^{-1}(G)^{\perp}$, and for any subset $G \subseteq \I_{X}^{1}$, we have $p(G)^{\pperp} = p(G^{\perp})$.

\item For any subset $G \subseteq \bar{\I}_{X}^{1}$, the orthogonal complement $G^{\pperp}$ is a closed and totally disconnected subgroup of $\bar{\I}_{X}^{1}$.

\end{enumerate}
\end{prop}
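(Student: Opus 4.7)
The plan is to obtain each part from the bimultiplicativity and local constancy of the global pairing, together with the facts already established about $\bar{\I}_{X}^{1}$. I would handle parts (1) and (4) (which are topological) together, do the key computation at the heart of (2), and close with the formal verification of (3).

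For part (1), the bimultiplicativity identities in~\eqref{E:propsSymbol} immediately make $G^{\perp}$ closed under products and inverses, and the inclusion $R^{1}\subseteq G^{\perp}$ is tautological since $R^{1}$ is orthogonal to all of $\I_{X}^{1}\supseteq G$. Closedness comes from Proposition~\ref{P:locConst}: for each fixed $g$, the map $\alpha\mapsto \globalsymbol{\alpha,g}$ is locally constant on $\I_{X}^{1}$, so $\{\alpha\in\I_{X}^{1} : \globalsymbol{\alpha,g}=1\}$ is clopen, and $G^{\perp}$ is the intersection of these sets. Part (4) then follows formally from (1) and (3): the set $p^{-1}(G^{\pperp}) = p^{-1}(G)^{\perp}$ is closed in $\I_{X}^{1}$ by (1), so by the defining property of the quotient topology $G^{\pperp}$ is closed in $\bar{\I}_{X}^{1}$, and total disconnectedness is inherited from $\bar{\I}_{X}^{1}$ itself (Theorem~\ref{thm:barI-T2totdis}), as every subspace of a totally disconnected space is totally disconnected.

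The core of (2) is the explicit identity $\globalsymbol{\alpha,c} = c^{-\deg D(\alpha)}$ for $\alpha\in\I_{X}$ and $c\in k^{*}$. To derive it I substitute $v_{x}(c)=0$ into~\eqref{eq:explicitPairI1}: the sign factor drops out, and the local symbol at $x$ collapses to $N_{k(x)/k}(c)^{-v_{x}(\alpha_{x})} = c^{-\deg(x) v_{x}(\alpha_{x})}$, using the classical relation $N_{k(x)/k}(c) = c^{\deg(x)}$ for constants. Taking the product over all $x$ yields $c^{-\deg D(\alpha)}$. The inclusion $\I_{X}^{1}\subseteq (k^{*})^{\perp}$ is immediate since $\deg D(\alpha)=0$ on $\I_{X}^{1}$. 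Conversely, if $k$ is infinite and $\alpha$ is orthogonal to $k^{*}$, then $c^{\deg D(\alpha)}=1$ for all $c$ in the infinite group $k^{*}$, which forces $\deg D(\alpha)=0$ because $x^{n}-1$ has only finitely many roots in a field; hence $\alpha\in\I_{X}^{1}$. The final sentence of (2) is then automatic, since orthogonality to any $G\supseteq k^{*}$ implies orthogonality to $k^{*}$.

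For part (3), the crucial point is the factorization of the pairing through the projection: since $\prod_{x}(1+\m_{x})\subseteq R^{1}$ (Theorem~\ref{T:rad1}), one has $\globalsymbol{\alpha,g} = \globalsymbol{p(\alpha),p(g)}$ for $\alpha,g\in\I_{X}^{1}$. Both claimed equalities then unwind formally: $\alpha\in p^{-1}(G)^{\perp}$ iff $\globalsymbol{p(\alpha),\bar g}=1$ for every $\bar g\in G$ (any lift works by the factorization), iff $p(\alpha)\in G^{\pperp}$, proving the first equality; the second uses the same factorization, with the nontrivial inclusion handled by choosing an arbitrary lift of an element of $p(G)^{\pperp}$. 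I do not anticipate any genuine obstacle; the only step that requires real attention is the computation in (2), where the norm identity $N_{k(x)/k}(c)=c^{\deg(x)}$ for constants $c\in k$ must be applied correctly.
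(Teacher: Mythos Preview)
Your proposal is correct and follows essentially the same approach as the paper: bimultiplicativity and Proposition~\ref{P:locConst} for (1), the explicit identity $\globalsymbol{\alpha,c}=c^{-\deg D(\alpha)}$ for (2), the factorization of the pairing through $p$ for (3), and Theorem~\ref{thm:barI-T2totdis} for total disconnectedness in (4). The only organizational differences are that the paper obtains closedness in (4) by invoking local constancy of the induced pairing on $\bar{\I}_{X}^{1}$ directly rather than via (1) and (3), and in (3) the paper derives the second equality from the first by writing $p^{-1}(p(G)^{\pperp}) = (G\cdot\prod_{x}(1+\m_{x}))^{\perp} = G^{\perp}$, whereas you argue both directly from the factorization; neither difference is substantive.
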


\begin{proof}
\leavevmode
\begin{enumerate}

\item That $G^{\perp}$ is a subgroup follows from the fact that $1 \in R^{1}$ and the pairing is an alternating form on $\I_{X}^{1}$. That it is closed is a consequence of Proposition~\ref{P:locConst}. Clearly $R^{1} = (\I_{X}^{1})^{\perp} \subseteq G^{\perp}$.

\item Simply note that $\globalsymbol{\alpha,c} = c^{-\deg D(\alpha)}$ for all $c \in k^{*}$.

\item Since $p$ is surjective, $p^{-1}(G^{\pperp}) = p^{-1}(G)^{\perp}$. From this we obtain $p^{-1}(p(G)^{\pperp}) = (p^{-1}(p(G)))^{\perp} = (G \cdot \prod_{x} (1 + \m_{x}))^{\perp} = G^{\perp}$, therefore $p(G)^{\pperp} = p(G^{\perp})$.

\item 

Since $\bar{\I}_{X}^{1}$ is totally disconnected (Theorem~\ref{thm:barI-T2totdis}) and $G^{\pperp}$ is a subgroup, $G^{\pperp}$ is also totally disconnected. It is closed because the pairing on $\bar{\I}_{X}^{1}$ is also locally constant.
\qedhere
\end{enumerate}
\end{proof}

\section{\texorpdfstring{Topological properties of $(\Sigma_{X}^{*})^{\pperp}/ \Sigma_{X}^{*}$}{Topology of the quotient}}\label{sec:TopPerp}

This section is mainly devoted to exhibiting how $\I_{X}^{1}/\Sigma_{X}^{*}$ encodes some arithmetical properties of the base field. Indeed, it is well-known in class field theory  that $\I_{X}^{1}/\Sigma_{X}^{*}$ is profinite for $k$ finite (\cite{CaFr}), but now we can also prove the converse implication (Proposition~\ref{p:IX1/SigmaProfinite}, see also Theorems~\ref{thm:LocComIFFFinite},~\ref{thm:PropertiesSigma*} for related results). This section finishes with the case when $X={\mathbb P}_1$, although the discussion of the dependence on the curve will be carried out in the next section,~\S\ref{sec:relation to the picard group}. 

In what follows, we continue to let $p:\I_{X}^{1}\to \bar{\I}_{X}^{1} = \I_{X}^{1} / (\prod_{x\in X}(1+{\m}_{x}))$ denote the quotient map. 

Note that $\Sigma_{X}^{*}\cap \prod(1+{\m}_{x})=\{1\}$, and thus, with another small but convenient abuse of notation, we may write $\Sigma_{X}^{*}\subseteq \bar{\I}_{X}^{1}$. Moreover, the triviality of the global commutator pairing~\eqref{E:WeilRec} on $\Sigma_{X}^{*}$ (which is equivalent to the Weil reciprocity law) implies that
\[
	\Sigma_{X}^{*}\subseteq (\Sigma_{X}^{*})^{\pperp} \subseteq 
	\bar {\I}_{X}^{1}.
\]
It follows from Proposition~\ref{P:propsPerp} that
\begin{equation}\label{E:SigmaPPerp}
	(\Sigma_{X}^{*})^{\pperp} = p((\Sigma_{X}^{*})^{\perp}) \simeq  (\Sigma_{X}^{*})^{\perp}/ \prod_{x\in X}(1+{\m}_{x}). 
\end{equation}

\begin{prop}
\label{prop:PropertiesSigmaOrt}
$(\Sigma_{X}^{*})^{\pperp}$ is closed, Hausdorff, totally disconnected and with empty interior in $\bar{\I}_{X}^{1}$.
\end{prop}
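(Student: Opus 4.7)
The plan is to dispatch the three topological properties quickly using results already in place, then concentrate the main effort on showing that $(\Sigma_X^*)^\pperp$ has empty interior. Closedness and total disconnectedness of $(\Sigma_X^*)^\pperp$ are immediate from Proposition~\ref{P:propsPerp}(4) applied with $G = \Sigma_X^*$, while the Hausdorff property is inherited as a subspace of $\bar{\I}_X^1$, which is Hausdorff by Theorem~\ref{thm:barI-T2totdis}.

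For the emptiness of the interior, I would argue by contradiction. Since $(\Sigma_X^*)^\pperp$ is a subgroup of the topological group $\bar{\I}_X^1$, having non-empty interior is equivalent to being open, hence to containing some basic open subgroup $\bar V_S = \prod_{x\in S}\{1\}\times \prod_{x\notin S} k(x)^*$ from the neighborhood basis of Theorem~\ref{thm:barI-T2totdis}, for some finite $S \subset X$. Thus the goal reduces to showing that for every such $S$ there exist $\alpha \in \bar V_S$ and $f \in \Sigma_X^*$ with $\globalsymbol{\alpha, f} \neq 1$.

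To produce such a pair, I would fix a closed point $y_0 \notin S$, then use Riemann--Roch applied to $\O_X(n y_0)$ for $n > 2g-2$ to obtain a non-constant $f \in \Sigma_X^*$ whose only pole is at $y_0$, of order $n$, so that $v_{y_0}(f) = -n$ and $v_x(f) \geq 0$ for all other $x$. Then take $\alpha$ supported only at $y_0$, with $\alpha_{y_0} = c \in k(y_0)^*$ and $\alpha_x = 1$ otherwise; this $\alpha$ has zero divisor, hence lies in $V_0 \subseteq \I_X^1$, and its image in $\bar{\I}_X^1$ lies in $\bar V_S$. Using the explicit formula~\eqref{eq:explicitPairI1} and $v_{y_0}(c) = 0$, the global pairing collapses to a single local contribution:
\[
	\globalsymbol{\alpha, f} \;=\; \localsymbol{c, f_{y_0}}_{y_0} \;=\; N_{k(y_0)/k}(c)^{-n}.
\]

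The main obstacle is producing $c \in k(y_0)^*$ with $N_{k(y_0)/k}(c)^{n} \neq 1$. Restricting to $c \in k^* \subseteq k(y_0)^*$ reduces the task to finding $c \in k^*$ with $c^{n\,\deg(y_0)} \neq 1$. For $k$ infinite this is immediate, since $k^*$ contains elements of arbitrarily large (or infinite) order. For $k$ finite, one has the freedom to vary $n$ (and, if necessary, the degree of $y_0$, using that $X$ admits closed points of unbounded degree), so one may arrange $n\,\deg(y_0)$ not to be a multiple of $|k^*|$; the appropriate $c$ then exists in the cyclic group $k^*$. In either case $\globalsymbol{\alpha,f}\neq 1$, contradicting $\bar V_S \subseteq (\Sigma_X^*)^\pperp$ and completing the proof.
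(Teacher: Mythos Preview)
Your argument follows essentially the same route as the paper's: the first three properties come straight from Proposition~\ref{P:propsPerp} and Theorem~\ref{thm:barI-T2totdis}, and for empty interior both you and the paper reduce (yours in $\bar\I_X^1$, the paper after lifting to $\I_X^1$) to showing that no basic neighborhood of $1$ lies in the orthogonal, by placing a test idele at a single point outside the support and pairing it against some $f\in\Sigma_X^*$. Your version is slightly more explicit---you invoke Riemann--Roch to control $v_{y_0}(f)$ and you correctly keep the norm $N_{k(y_0)/k}$ in the computation---whereas the paper simply uses that $\globalsymbol{\mu,f}=1$ for all $f$ forces $\mu(x_0)=1$; in fact Riemann--Roch is unnecessary, since $v_{y_0}\colon\Sigma_X^*\to\mathbb{Z}$ is already surjective.

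One small caveat, shared with the paper's own proof: for $k=\mathbb{F}_2$ the pairing lands in $k^*=\{1\}$ and is identically trivial, so $(\Sigma_X^*)^\pperp=\bar\I_X^1$ and the empty-interior assertion actually fails; your finite-field step detects this honestly, since arranging $n\deg(y_0)$ not to be a multiple of $|k^*|=1$ is impossible.
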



\begin{proof}
The fact that $(\Sigma_{X}^{*})^{\pperp}$ is Hausdorff and totally disconnected follows from Theorem~\ref{thm:barI-T2totdis}, and it is closed by Proposition~\ref{P:propsPerp}.
Let us then show that it has empty interior. Since $(\Sigma_{X}^{*})^{\perp}=p^{-1}((\Sigma_{X}^{*})^{\pperp})$, it suffices to show that  $(\Sigma_{X}^{*})^{\perp}$ has empty interior. Otherwise, there exists $\lambda\in (\Sigma_{X}^{*})^{\perp}$ and an effective divisor $D$ such that $\lambda V_{D} \subseteq (\Sigma_{X}^{*})^{\perp}$. Multiplying by $\lambda^{-1}$, it follows that $V_{D} \subseteq (\Sigma_{X}^{*})^{\perp}$. Choose a closed point $x_{0} \in X \setminus \supp (D)$ such that $k(x_{0})$ has more than two elements and choose an idele $\mu\in\I_{X}^{1}$ with $v_{x}(\mu_{x})=0$ for all $x$, $\mu_{x}=1$ for all $x \neq x_{0}$ and $\mu(x_{0}) \neq 1$. Then $\mu\in V_{D}$, and an easy computation yields
\[
	\globalsymbol{\mu,f}= \mu(x_{0})^{v_{x_{0}}(f)},
\]
which has to be equal to $1$ for all $f\in\Sigma_{X}^{*}$ since $V_{D} \subseteq (\Sigma_{X}^{*})^{\perp}$. This implies that $\mu(x_{0})=1$, which contradicts our choice of $\mu$.

\end{proof}

Note that, contrary to what one might suspect in light of Proposition~\ref{prop:PropertiesSigmaOrt}, $(\Sigma_{X}^{*})^{\pperp}$ is not necessarily discrete in $\bar{\I}_{X}^{1}$. See Proposition~\ref{prop:normInQuotient} for more on this.

\begin{lem}
\label{lem:closed}
$\Sigma_{X}^{*}$ is closed in $\bar{\I}_{X}^{1}$.
\end{lem}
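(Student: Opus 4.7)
My plan is to reduce the claim, via the quotient topology, to showing that $p^{-1}(\Sigma_X^*) = \Sigma_X^* \cdot H$ is closed in $\I_X^1$, where $H := \prod_{x \in X}(1+\m_x)$. Once this is established, the statement follows from the definition of the quotient topology on $\bar{\I}_X^{1}$.

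To that end, I would take $\alpha \in \I_X^1$ in the closure of $\Sigma_X^{*}\cdot H$ and aim to produce $f \in \Sigma_X^{*}$ with $\alpha f^{-1} \in H$. The key input is the neighborhood basis of $1 \in \I_X^1$ provided by the $V_D$ from~\eqref{E:basisIdelesVD}. Since $H \subseteq V_D$ for every effective $D$, the neighborhood $\alpha V_D$ of $\alpha$ must meet $\Sigma_X^{*}\cdot H$, yielding $f_D \in \Sigma_X^{*} \cap \alpha V_D$. Reading the condition $\alpha^{-1} f_D \in V_D$ componentwise, one obtains $v_x(f_{D,x}) = v_x(\alpha_x)$ for every $x$, together with the residue normalization $(\alpha^{-1} f_D)(x) = 1$ in $k(x)$ for $x \in \supp D$.

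The decisive observation is that $D(f_D) = D(\alpha)$ independently of $D$, so any two $f_D, f_{D'}$ differ by a constant in $k^{*}$; this uses both that $X$ is complete and that $k$ is algebraically closed in $\Sigma_X$. Fixing a non-empty reduced effective divisor $D_0$ and writing $f_D = c_D f_{D_0}$ for $\supp D \supseteq \supp D_0$, the residue condition at any point $x \in \supp D_0$, combined with the injectivity of $k \hookrightarrow k(x)$, forces $c_D = 1$. Hence the family $\{f_D\}$ stabilizes to a single $f \in \Sigma_X^{*}$ as $D$ grows; letting $\supp D$ exhaust $X$ then shows $(\alpha^{-1} f)(x) = 1$ for every $x \in X$, and therefore $\alpha^{-1} f \in H$ as desired.

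The main obstacle, in my view, will be turning the merely asymptotic approximation into an exact equality. This rests on the rigidity provided by the equality of divisors (which forces $f_D = c_D f_{D_0}$ with $c_D \in k^{*}$) and the residue normalization (which pins the scalar to $1$); both inputs trace back to the hypothesis that $k$ is algebraically closed in $\Sigma_X$. Without this hypothesis one would only obtain $c_D \in k(x)^{*}$ at each point, which would not suffice to produce a single globally defined function $f$.
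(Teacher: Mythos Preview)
Your proof is correct and follows essentially the same approach as the paper's: in both cases one shows that the approximating functions $f_D$ (the paper's $f_S$) all share the divisor of the given idele, hence differ only by a scalar in $k^{*}$, and the residue normalization at a common point forces the scalar to be $1$, so the family stabilizes. The only differences are cosmetic---you argue directly in $\I_X^1$ using the $V_D$'s while the paper works in $\bar{\I}_X^1$ with the $\bar{V}_S$'s and phrases the argument by contradiction.
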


\begin{proof}
It suffices to show that given $\bar\lambda \in \bar{\I}^1_{X}$ with $\bar\lambda\notin p(\Sigma_{X}^{*})$, there exists a finite subset $S$ such that:
\[
	p(\Sigma_{X}^{*})  \cap \bar\lambda  \bar{V}_{S} = \emptyset ,
\]
for which it is enough to check that their preimages do not intersect:
\[
	\Bigl(
		\Sigma_{X}^{*} \prod_{x\in X}(1+{\m}_{x})
	\Bigr)
	\:\bigcap\:
	\lambda
	\Big(
		\prod_{x\in S}(1+{\m}_{x})
		\times
		\prod_{x\notin S}\widehat{\O}_{X,x}^{*}
	\Big)
	= \emptyset.
\]
Since the subgroup $\prod_{x\in X}(1+{\m}_{x})$ acts on both terms in the intersection, this is equivalent to showing that there exists a finite $S$ with
\[
	\Sigma_{X}^{*}  
	\:\bigcap\:
	\lambda
	\Big(
		\prod_{x\in S}(1+{\m}_{x})
		\times
		\prod_{x\notin S}\widehat{\O}_{X,x}^{*}
	\Big)
	= \emptyset.
\]
Suppose, to the contrary, that the intersection is not empty for all $S$, and for each finite subset $S\subseteq X$, choose a function $f_{S}\in\Sigma_{X}^{*}$ lying in the intersection. Then
\begin{equation}
\label{eq:f_S}
\begin{aligned}
	v_{x}(f_{S}) &= v_{x}(\lambda) &\quad &\forall x\in X ,
	\\
	\frac{f_{S}}{\lambda}(x) &= 1 &\quad &\forall x\in S.
\end{aligned}
\end{equation}

Suppose that $S,S' \subseteq X$ are finite subsets with $S\cap S'\neq \emptyset$ and $S\neq S'$. Then $v_{x}(f_{S})=v_{x}(f_{S'})$ for all $x\in X$. Since $X$ is complete and connected, there exists a point $a$ in the algebraic closure of $k$ satisfying $f_{S}= a f_{S'}$. Evaluating at any point of $S\cap S'$, we get $a=1$. This shows that $f_{S}$ does not depend on $S$, and we can simply denote this function by $f$. Then  \eqref{eq:f_S} implies that in fact $f = \bar\lambda$ in $\bar\I_{X}^{1}= \I_{X}^{1}/ \prod_{x\in X}(1+{\m}_{x})$, and consequently, that $\bar\lambda \in p(\Sigma_{X}^{*})$, which contradicts the choice of $\lambda$.
\end{proof}

\begin{thm}
\label{thm:SigmaPerpHausTotDis}
$(\Sigma_{X}^{*})^{\pperp}/\Sigma_{X}^{*} \subset \bar\I_{X}^{1}/\Sigma_{X}^{*} $ is closed and both groups are Hausdorff and totally disconnected. 
\end{thm}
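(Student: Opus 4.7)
The plan is to reduce the four assertions (closedness of $(\Sigma_X^*)^{\pperp}/\Sigma_X^*$, Hausdorffness of both, total disconnectedness of both) to two closedness facts already in hand: $\Sigma_X^*$ is closed in $\bar{\I}_X^1$ by Lemma~\ref{lem:closed}, and $(\Sigma_X^*)^{\pperp}$ is closed in $\bar{\I}_X^1$ by Proposition~\ref{P:propsPerp}(4) (also recorded in Proposition~\ref{prop:PropertiesSigmaOrt}). Once these are invoked, everything reduces to standard topological-group manipulations.

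First I would handle the ambient quotient $\bar{\I}_X^1/\Sigma_X^*$. Let $q$ denote the projection. Since $\Sigma_X^*$ is a closed subgroup of the Hausdorff topological group $\bar{\I}_X^1$ (Theorem~\ref{thm:barI-T2totdis}), the quotient is Hausdorff by the usual fact that a topological group modulo a closed subgroup is Hausdorff. For total disconnectedness, I would exploit the clopen subgroup basis $\{\bar{V}_S\}$ at $1\in \bar{\I}_X^1$ from Theorem~\ref{thm:barI-T2totdis}: since $q$ is open (quotient maps of topological groups are always open), each $q(\bar{V}_S)$ is an open subgroup of $\bar{\I}_X^1/\Sigma_X^*$, hence clopen. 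This yields a clopen subgroup basis at the identity in the quotient; translating by group elements extends this to a clopen basis at every point, and together with Hausdorffness this forces total disconnectedness.

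Next, for the subgroup $(\Sigma_X^*)^{\pperp}/\Sigma_X^*$: the inclusion $\Sigma_X^* \subseteq (\Sigma_X^*)^{\pperp}$ (that is, the Weil reciprocity law~\eqref{E:WeilRec}) gives the saturation identity $q^{-1}(q((\Sigma_X^*)^{\pperp})) = (\Sigma_X^*)^{\pperp}$. Since the right-hand side is closed in $\bar{\I}_X^1$ and a subset of a quotient space is closed iff its preimage is, it follows that $(\Sigma_X^*)^{\pperp}/\Sigma_X^*$ is closed in $\bar{\I}_X^1/\Sigma_X^*$. Being a subspace of a Hausdorff, totally disconnected space, it inherits both properties automatically.

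The main obstacle here is essentially nil, since the nontrivial work has already been carried out: Lemma~\ref{lem:closed} for closedness of $\Sigma_X^*$, and Proposition~\ref{P:propsPerp} (via local constancy of the pairing in Proposition~\ref{P:locConst}) for closedness of $(\Sigma_X^*)^{\pperp}$. The only point to watch is the correct citation of openness of the quotient map for topological groups, needed to push the clopen subgroup basis down to $\bar{\I}_X^1/\Sigma_X^*$.
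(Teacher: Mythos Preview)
Your proposal is correct and follows essentially the same approach as the paper: both invoke Lemma~\ref{lem:closed} and Proposition~\ref{prop:PropertiesSigmaOrt} for the two closedness inputs, deduce Hausdorffness of the quotient from closedness of $\Sigma_X^*$, establish total disconnectedness by pushing down a basis of open subgroups (you use $\bar{V}_S$ via $q:\bar{\I}_X^1\to\bar{\I}_X^1/\Sigma_X^*$, the paper uses $\Sigma_X^* V_D$ via $\bar p:\I_X^1\to\bar{\I}_X^1/\Sigma_X^*$, which is the same basis up to the identification $\bar V_S=p(V_D)$), and then pass both properties to the closed subspace.
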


\begin{proof} 
Proposition~\ref{prop:PropertiesSigmaOrt} implies that $(\Sigma_{X}^{*})^{\pperp}/\Sigma_{X}^{*} \subset \bar\I_{X}^{1}/\Sigma_{X}^{*} $ is closed. By Lemma~\ref{lem:closed}, the quotient space $\bar\I_{X}^{1}/\Sigma_{X}^{*} $ is Hausdorff. Since $(\Sigma_{X}^{*})^{\pperp}/\Sigma_{X}^{*}$ is a subspace of a Hausdorff space, it is also Hausdorff. 

Consider the quotient map $\bar{p}: \I_{X}^{1}\to \bar\I_{X}^{1}/\Sigma_{X}^{*} $. It is straightforward to check that the subsets $\Sigma_{X}^{*} V_{D} $ are open subgroups of $\I_{X}^{1}$ and, since $\Sigma_{X}^{*} V_{D}=\bar{p}^{-1}(\bar{p}( \Sigma_{X}^{*} V_{D}))$, that the collection
\[
	\big\{ \bar{p}(\Sigma_{X}^{*} V_{D}) \:\vert\: D \in \Div(X), \text{ $D$ is  effective}\big\}
\]
is a basis of neighborhoods of $1$ in $\bar\I_{X}^{1}/\Sigma_{X}^{*} $. Since this basis consists of open subgroups of $\bar\I_{X}^{1}/\Sigma_{X}^{*} $, it follows that $\bar\I_{X}^{1}/\Sigma_{X}^{*} $ is totally disconnected, and since subspaces of totally disconnected spaces are also totally disconnected, the conclusion follows. 
\end{proof}

\begin{prop}\label{p:IX1/SigmaProfinite}	
$\bar\I_{X}^{1}/\Sigma_{X}^{*}$ is profinite if and only if $k$ is finite.
\end{prop}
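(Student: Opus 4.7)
The plan is to prove the two implications separately. The forward direction (``$k$ finite implies profinite'') follows almost immediately from earlier results: by Theorem~\ref{thm:PropertiesSigma*}, $\I_{X}^{1}/\Sigma_{X}^{*}$ is compact when $k$ is finite, and since
\[
	\bar{\I}_{X}^{1}/\Sigma_{X}^{*} \;\cong\; \I_{X}^{1}\big/\bigl(\Sigma_{X}^{*}\cdot\prod_{x\in X}(1+\m_{x})\bigr)
\]
is a Hausdorff continuous quotient of it, it is also compact. Combined with Theorem~\ref{thm:SigmaPerpHausTotDis} (Hausdorff and totally disconnected), this supplies the defining properties of a profinite topological group.

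For the converse, the plan is to exploit the clopen subgroup $\bar{p}(V_{0})\subseteq \bar{\I}_{X}^{1}/\Sigma_{X}^{*}$, where $V_{0}=\prod_{x}\widehat{\O}_{X,x}^{*}$ and $\bar{p}\colon \I_{X}^{1}\to \bar{\I}_{X}^{1}/\Sigma_{X}^{*}$ is the composite quotient map. The first step is to verify that $V_{0}\cap\bigl(\Sigma_{X}^{*}\cdot\prod_{x}(1+\m_{x})\bigr)=k^{*}\cdot\prod_{x}(1+\m_{x})$: writing an element as $f\cdot u$ with $f\in\Sigma_{X}^{*}$ and $u\in\prod_{x}(1+\m_{x})$, the hypothesis $f\cdot u\in V_{0}$ forces $v_{x}(f)=0$ for every $x$, so $f$ has no zeros or poles and hence $f\in k^{*}$ (using that $X$ is complete and connected). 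Since $V_{0}$ is open in $\I_{X}^{1}$, the restriction $\bar{p}|_{V_{0}}$ is itself a quotient map, and therefore
\[
	\bar{p}(V_{0}) \;\cong\; \prod_{x\in X} k(x)^{*}\big/\Delta(k^{*})
\]
as topological groups, where $\Delta$ is the diagonal embedding and each $k(x)^{*}$ carries the discrete topology. Profiniteness of $\bar{\I}_{X}^{1}/\Sigma_{X}^{*}$ then forces $\bar{p}(V_{0})$ to be compact.

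The crux of the argument is to produce, under the assumption that $k$ is infinite, a continuous surjection from $\bar{p}(V_{0})$ onto a discrete non-compact group. Let $x_{0}\in X$ be the $k$-rational point guaranteed by the standing hypothesis, and pick any other closed point $x_{1}\in X$ (infinitely many closed points exist on a smooth projective curve). I would consider the homomorphism
\[
	\psi\colon \prod_{x\in X} k(x)^{*}\big/\Delta(k^{*}) \longrightarrow k(x_{1})^{*}, \qquad [(a_{x})_{x}] \longmapsto a_{x_{1}}\,a_{x_{0}}^{-1},
\]
which is well-defined (a diagonal factor $c\in k^{*}$ cancels), and surjective by sending $b\in k(x_{1})^{*}$ to the class of the tuple with $a_{x_{1}}=b$, $a_{x_{0}}=1$, and arbitrary remaining entries. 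Continuity reduces to checking that the preimage of $1\in k(x_{1})^{*}$ is open; pulled back along $\prod_{x} k(x)^{*}\to \prod_{x} k(x)^{*}/\Delta(k^{*})$, this preimage is $\{(b_{x}):b_{x_{0}}=b_{x_{1}}\}$, which decomposes as the disjoint union over $c\in k^{*}$ of the basic open neighborhoods $\{b_{x_{0}}=b_{x_{1}}=c\}$ (each factor $k(x)^{*}$ being discrete), and is therefore open. Compactness of $\bar{p}(V_{0})$ would then make $k(x_{1})^{*}$ a compact discrete group, hence finite, and since $k^{*}\subseteq k(x_{1})^{*}$ one concludes that $k$ is finite.

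The step I expect to be most delicate is the identification of the subspace topology on $\bar{p}(V_{0})$ with the quotient topology on $\prod_{x} k(x)^{*}/\Delta(k^{*})$, which depends crucially on the openness of $V_{0}\subseteq \I_{X}^{1}$ to ensure that the restriction of an open quotient map remains a quotient map. Everything else is routine, leveraging the explicit basic neighborhoods $\bar{V}_{S}$ of Theorem~\ref{thm:barI-T2totdis} and the discreteness of $k(x)^{*}$ in each factor.
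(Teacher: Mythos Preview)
Your argument is correct. The forward direction is exactly the paper's: invoke Theorem~\ref{thm:SigmaPerpHausTotDis} for Hausdorff and total disconnectedness, and Theorem~\ref{thm:PropertiesSigma*} for compactness when $k$ is finite, passing to the further quotient $\bar{\I}_{X}^{1}/\Sigma_{X}^{*}$.

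For the converse, the paper is extremely terse: it simply writes ``this follows from Theorem~\ref{thm:PropertiesSigma*}'', i.e.\ it reduces profiniteness to compactness (via Theorem~\ref{thm:SigmaPerpHausTotDis}) and then appeals to the equivalence ``compact $\Leftrightarrow$ $k$ finite'' already established for $\I_{X}^{1}/\Sigma_{X}^{*}$. Strictly speaking that result concerns $\I_{X}^{1}/\Sigma_{X}^{*}$ rather than the further quotient $\bar{\I}_{X}^{1}/\Sigma_{X}^{*}$, and the paper leaves the adaptation implicit. Your argument fills this in directly inside $\bar{\I}_{X}^{1}/\Sigma_{X}^{*}$: you identify the clopen subgroup $\bar{p}(V_{0})\cong\prod_{x}k(x)^{*}/\Delta(k^{*})$ and produce a continuous surjection onto the discrete group $k(x_{1})^{*}$. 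This is a minor variant of the idea behind the converse in Theorem~\ref{thm:PropertiesSigma*} (there one shows $V_{D}$ embeds as a compact set and invokes Theorem~\ref{thm:LocComIFFFinite}), transported to the barred quotient. The two routes are morally the same; yours has the virtue of being self-contained at the level of $\bar{\I}_{X}^{1}/\Sigma_{X}^{*}$, while the paper's is shorter by outsourcing the work to the earlier theorem.
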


\begin{proof}
A topological group is profinite if and only if it is Hausdorff, compact and totally disconnected. Keeping in mind Theorem~\ref{thm:SigmaPerpHausTotDis}, this follows from Theorem~\ref{thm:PropertiesSigma*}.
\end{proof}

\begin{cor}
\label{cor:profinite}	
If $k$ is finite, then $(\Sigma_{X}^{*})^{\pperp}/\Sigma_{X}^{*}$ is profinite.
\end{cor}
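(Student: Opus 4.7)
The plan is to deduce this corollary directly from the results already established, using the fact that a closed subgroup of a profinite topological group is again profinite.

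First I would invoke Theorem~\ref{thm:SigmaPerpHausTotDis}, which tells us that $(\Sigma_{X}^{*})^{\pperp}/\Sigma_{X}^{*}$ is a closed subgroup of the quotient topological group $\bar{\I}_{X}^{1}/\Sigma_{X}^{*}$. Next I would apply Proposition~\ref{p:IX1/SigmaProfinite} in the ``if'' direction: since $k$ is finite by hypothesis, $\bar{\I}_{X}^{1}/\Sigma_{X}^{*}$ is profinite, i.e., a Hausdorff, compact, totally disconnected topological group.

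Finally, I would conclude by the standard fact that any closed subgroup of a profinite group is itself profinite. Concretely, $(\Sigma_{X}^{*})^{\pperp}/\Sigma_{X}^{*}$ inherits the Hausdorff and totally disconnected properties from the ambient group (indeed these are already contained in Theorem~\ref{thm:SigmaPerpHausTotDis} and Proposition~\ref{prop:PropertiesSigmaOrt}), and it is compact because a closed subspace of a compact Hausdorff space is compact. Thus $(\Sigma_{X}^{*})^{\pperp}/\Sigma_{X}^{*}$ is profinite.

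There is no genuine obstacle: the corollary is essentially an immediate combination of the preceding results. The only minor subtlety is making sure the closedness of $(\Sigma_{X}^{*})^{\pperp}/\Sigma_{X}^{*}$ in $\bar{\I}_{X}^{1}/\Sigma_{X}^{*}$ is used with the correct topology (the quotient topology from $\bar{\I}_{X}^{1}$), which has already been set up carefully in the preceding section.
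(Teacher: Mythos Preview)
Your proposal is correct and follows exactly the paper's approach: the paper's own proof simply reads ``Clear from Theorem~\ref{thm:SigmaPerpHausTotDis} and Proposition~\ref{p:IX1/SigmaProfinite},'' which is precisely the combination you describe. Your additional remark that a closed subgroup of a profinite group is profinite just spells out the implicit final step.
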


\begin{proof}
Clear from Theorem~\ref{thm:SigmaPerpHausTotDis} and Proposition~\ref{p:IX1/SigmaProfinite}.
\end{proof}

\begin{prop}
\label{prop:normInQuotient}
Consider the subgroup $\prod_{x\in X} N_{x} \subset \bar\I_{X}^{1}$. Then
\begin{equation}\label{E:NormsPerp}
	\prod_{x\in X} N_{x}  \subseteq  (\Sigma_{X}^{*})^{\pperp} / \Sigma_{X}^{*}.
\end{equation}
In addition, if $(\Sigma_{X}^{*})^{\pperp} / \Sigma_{X}^{*}$ is discrete, then $k$ must be algebraically closed.
\end{prop}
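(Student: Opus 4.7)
The plan is to handle the two assertions separately. For the inclusion \eqref{E:NormsPerp}, Theorem~\ref{T:rad1} already provides $\prod_{x}(1+\m_{x})N_{x}\subseteq R^{1}\subseteq(\Sigma_{X}^{*})^{\perp}$, and projecting via $p$ while using the identification \eqref{E:SigmaPPerp} places $\prod_{x}N_{x}$ inside $(\Sigma_{X}^{*})^{\pperp}\subseteq\bar{\I}_{X}^{1}$. To quotient by $\Sigma_{X}^{*}$ without loss of information, I must verify that the intersection $\prod_{x}N_{x}\cap\Sigma_{X}^{*}$ in $\bar{\I}_{X}^{1}$ is trivial: any element of this intersection has trivial divisor and, since $X$ is complete and connected, must be a constant $c\in k^{*}$; the requirement $c\in N_{x}$ then reads $c^{\deg(x)}=1$ for every closed $x$, so evaluating at the distinguished $k$-rational point $x_{0}$ yields $c=1$.

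For the second assertion, my aim is to leverage the injection just established to transport discreteness from the quotient back to $\prod_{x}N_{x}$. The composition
\[
\prod_{x}N_{x}\hookrightarrow\bar{\I}_{X}^{1}\to\bar{\I}_{X}^{1}/\Sigma_{X}^{*}
\]
is continuous, takes values in $(\Sigma_{X}^{*})^{\pperp}/\Sigma_{X}^{*}$, and is injective by the first paragraph. Assuming the target is discrete, $\{1\}$ is open in it; continuity and injectivity then imply that $\{1\}$ is open in $\prod_{x}N_{x}$ with the subspace topology inherited from $\bar{\I}_{X}^{1}$. By Theorem~\ref{thm:barI-T2totdis} a basis of neighborhoods at $1$ in $\bar{\I}_{X}^{1}$ is given by the $\bar V_{S}$, hence openness of $\{1\}$ yields a finite set $S\subset X$ such that $N_{x}=\{1\}$ for every $x\notin S$. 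Invoking the characterization of the kernel of the norm map cited in the proof of Theorem~\ref{T:rad1}, we conclude $k(x)=k$ for all but finitely many closed points of $X$.

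To close the argument, I will show that cofinite rationality of closed points forces $k$ to be algebraically closed. If $k$ were not algebraically closed, then $k[t]$ would contain infinitely many monic irreducible polynomials of degree $>1$ — for a finite base field this is classical, while for an infinite non-algebraically-closed $k$ one translates a single nontrivial irreducible by all elements of $k$. Each such polynomial produces a closed point of $\mathbb{P}^{1}_{k}$ of residue degree $>1$, and pulling back along any non-constant morphism $\phi:X\to\mathbb{P}^{1}_{k}$ (which exists because $\Sigma_{X}$ is nontrivial over $k$) yields infinitely many closed points $x\in X$ with $[k(x):k]\geq[k(\phi(x)):k]>1$, contradicting the previous paragraph. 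I expect the only delicate point to be the bookkeeping of the three topologies in play — the subspace topology on $\prod_{x}N_{x}$, the topology on $\bar{\I}_{X}^{1}$ given by Theorem~\ref{thm:barI-T2totdis}, and the quotient topology on $\bar{\I}_{X}^{1}/\Sigma_{X}^{*}$ — together with the triviality of $\prod_{x}N_{x}\cap\Sigma_{X}^{*}$, without which the discreteness transfer would fail.
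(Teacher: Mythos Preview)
Your proof is correct and follows essentially the same route as the paper. Both arguments rest on the three ingredients: the containment $\prod_{x}N_{x}\subseteq(\Sigma_{X}^{*})^{\pperp}$ coming from Theorem~\ref{T:rad1}, the fact that $N_{x}=\{1\}$ forces $k(x)=k$, and the equivalence between $k$ being algebraically closed and (co)finitely many closed points being rational. The paper argues the contrapositive of the second assertion while you argue it directly, but the content is the same.

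Where you go beyond the paper is in making explicit two steps it leaves to the reader. First, you verify $\prod_{x}N_{x}\cap\Sigma_{X}^{*}=\{1\}$ in $\bar\I_{X}^{1}$ (using the rational point $x_{0}$), which is exactly what underlies the paper's ``one checks that $p(\prod_{x}N_{x})\cap p(\bar V_{S})\neq\{\bar 1\}$''; without it the discreteness transfer would indeed fail, as you anticipated. Second, you supply an argument---via pullback along a nonconstant $X\to\mathbb{P}^{1}_{k}$---for why a non--algebraically-closed $k$ forces infinitely many non-rational closed points on $X$, which the paper simply asserts. One small caution in that last step: distinct translates $p(t-a)$ of an irreducible polynomial can occasionally coincide in positive characteristic, but the stabilizer $\{c\in k:p(t+c)=p(t)\}$ is a finite additive subgroup (bounded by $\deg p$), so the set of translates is still infinite when $k$ is; your conclusion stands.
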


\begin{proof}
The inclusion follows from Theorem~\ref{T:rad1}. 

If $k$ is not algebraically closed, then $X$ has infinitely many closed points (i.e. defined over $\bar k$) which are not rational (i.e. not defined over $k$) and, since $k$ is assumed perfect, $k\hookrightarrow k(x)$ is separable. For such a point $x$ one necessarily has $N_x\neq \{1\}$ (see for example~\cite{Ono}).

It is thus clear that $\big(\prod_{x\in X} N_{x}\big) \cap \bar V_S \neq \{\bar 1\}$ in $\bar\I_X^1$ for any finite subset $S\subset X$. Furthermore, one checks that  
	\[
	p\big(\prod_{x\in X} N_{x}\big) \cap p(\bar V_S) \, \neq\,  \{\bar 1\} \qquad \forall S
	\] 
where $p: \bar\I_X^1 \to\bar\I_X^1 / \Sigma_{X}^{*}$ is the projection, and thus $p\big(\prod_{x\in X} N_{x}\big)$ cannot be discrete. Thus by~\eqref{E:NormsPerp}, one gets a contradiction if $(\Sigma_{X}^{*})^{\pperp} / \Sigma_{X}^{*}$ is discrete.

\end{proof}

\begin{cor}
\label{cor:algcerrP1}
Let $X = \mathbb{P}_{1}$. If $k$ is infinite, then
\[
	\prod_{x\in X} N_{x} = (\Sigma_{X}^{*})^{\pperp} / \Sigma_{X}^{*},
\]
and hence
\[
	(\Sigma_{X}^{*})^{\pperp}/\Sigma_{X}^{*} = 0
\]
if and only if $k$ is algebraically closed.
\end{cor}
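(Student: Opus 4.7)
The plan is to establish the two opposing inclusions separately. The reverse inclusion $\prod_{x\in X} N_{x} \subseteq (\Sigma_{X}^{*})^{\pperp}/\Sigma_{X}^{*}$ is already provided by Proposition~\ref{prop:normInQuotient}, so the work lies in the forward direction, for which the specific geometry of $\mathbb{P}_{1}$, namely $\Pic^{0}(\mathbb{P}_{1}) = 0$, will be the key reduction.

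First I would use $\Pic^{0}(\mathbb{P}_{1}) = 0$ to reduce to the subgroup $V_{0} \cap (\Sigma_{X}^{*})^{\perp}$. Given any $\alpha \in (\Sigma_{X}^{*})^{\perp} \subseteq \I_{X}^{1}$, the divisor $D(\alpha)$ has degree zero on $\mathbb{P}_{1}$ and is therefore principal, say $D(\alpha) = D(f)$ for some $f \in \Sigma_{X}^{*}$. Then $\beta := f^{-1}\alpha$ lies in $V_{0}$, and by Weil reciprocity~\eqref{E:WeilRec} together with the fact that $\Sigma_{X}^{*} \subseteq (\Sigma_{X}^{*})^{\perp}$, we also have $\beta \in (\Sigma_{X}^{*})^{\perp}$. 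Hence the class of $\alpha$ in $(\Sigma_{X}^{*})^{\pperp}/\Sigma_{X}^{*}$ agrees with that of $\beta$, and it suffices to analyze $V_{0} \cap (\Sigma_{X}^{*})^{\perp}$.

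The main step, and the place I expect the bulk of the computation, is the explicit identification of $V_{0} \cap (\Sigma_{X}^{*})^{\perp}$. For $\beta \in V_{0}$, the vanishing of $v_{x}(\beta_{x})$ reduces formula~\eqref{eq:explicitPairI1} to
\[
	\globalsymbol{\beta,f} = \prod_{x \in X} \norm{\beta(x)}^{v_{x}(f)} \qquad (f \in \Sigma_{X}^{*}).
\]
Writing $c_{x} := \norm{\beta(x)} \in k^{*}$, the orthogonality condition becomes $\prod_{x} c_{x}^{v_{x}(f)} = 1$ for every $f \in \Sigma_{X}^{*}$. Since every degree-zero divisor on $\mathbb{P}_{1}$ is principal, this is equivalent to the multiplicative character $\sum n_{x} x \mapsto \prod c_{x}^{n_{x}}$ being trivial on $\Div^{0}(X)$, so it must factor through the degree map. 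Pinning this down by evaluating at the divisor $x - \deg(x) \, x_{0}$ for a fixed rational point $x_{0}$ and an arbitrary closed $x$, one obtains $c_{x} = c^{\deg(x)}$ with $c := c_{x_{0}} \in k^{*}$. Interpreting $c$ as a constant idele, $N_{k(x)/k}(c) = c^{\deg(x)}$ gives $\norm{(c^{-1}\beta)(x)} = 1$, i.e. $(c^{-1}\beta)_{x} \in N_{x}(1 + \m_{x})$ under the decomposition $\widehat{\O}_{X,x}^{*} \simeq k(x)^{*} \times (1 + \m_{x})$. Consequently $\beta \in k^{*} \cdot \prod_{x} N_{x} \cdot \prod_{x}(1 + \m_{x})$.

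Finally, reducing modulo $\prod_{x}(1 + \m_{x})$ and then modulo $\Sigma_{X}^{*}$ (which contains $k^{*}$) kills the unwanted factors, yielding $\alpha \equiv \beta \in \prod_{x} N_{x}$ inside $\bar{\I}_{X}^{1}/\Sigma_{X}^{*}$, which is the forward inclusion. For the ``hence'' clause, triviality of $\prod_{x} N_{x}$ in $\bar{\I}_{X}^{1}/\Sigma_{X}^{*}$ amounts to $N_{x} = \{1\}$ for every closed $x$; if $k$ is algebraically closed this is automatic, while if not, $\mathbb{P}_{1}$ admits closed points of degree $> 1$ and $N_{x} \neq \{1\}$ by the cited result of~\cite{Ono}. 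A short argument checking that such an element is not absorbed by $\Sigma_{X}^{*} \cdot \prod_{x}(1+\m_{x})$ (a function with no zeros or poles on $\mathbb{P}_{1}$ is constant) completes the equivalence.
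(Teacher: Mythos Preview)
Your argument is correct and follows essentially the same route as the paper's proof: reduce to $V_{0}\cap(\Sigma_{X}^{*})^{\perp}$ via $\Pic^{0}(\mathbb{P}_{1})=0$, then test against rational functions with divisor $\deg(x)\,x_{0}-x$ to force $\norm{\beta(x)}=c^{\deg(x)}$ for a single constant $c\in k^{*}$. The paper compresses this last step into ``arguing as in the proof of Theorem~\ref{T:rad1},'' whereas you spell it out explicitly; the content is the same.
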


\begin{proof}
Suppose $X={\mathbb{P}_{1}}$ and let $\alpha\in  (\Sigma_{X}^{*})^{\pperp}$. Since $X = {\mathbb{P}}_{1}$, there is a rational function $f\in\Sigma_{X}^{*}$ such that $D(\alpha)=D(f)$. It is enough to show that $f^{-1}\cdot  \alpha \in  \prod_{x\in X} N_{x} $. Let $x_{0}$ be a rational point and let $x_{1}$ be an arbitrary closed point. Consider a rational function $\beta\in \Sigma_{X}^{*}$ with divisor $\deg(x_{1}) x_{0}- x_{1}$. The result follows arguing as in the proof of Theorem~\ref{T:rad1}. 	
The rest is clear.
\end{proof}


\section{\texorpdfstring{Relation of $(\Sigma_{X}^{*})^{\pperp}/\Sigma_{X}^{*}$ to the Picard group}{Relation to the Picard group}}
\label{sec:relation to the picard group}

This last section shows how the quotient $(\Sigma_{X}^{*})^{\pperp}/\Sigma_{X}^{*}$ relates to the Picard group of $X$, and thus unveils how this subgroup depends on the curve. Besides some general results (Theorem~\ref{thm:EXACTSEQSigma}), the cases of $k=\C$ and $X$ an elliptic curve over $\C$ will be worked out in detail. Finally, we deduce further consequences which apply to field extensions of $\Sigma_X$ (see Theorem~\ref{T:SigmaAlg}).

To begin with, let us observe that the map \eqref{eq:IXaDivisor} induces a map
\[
	(\Sigma_{X}^{*})^{\pperp}/\Sigma_{X}^{*} \longrightarrow   
\Pic^0(X),
\]
which constitutes the main object of study in this section.

A couple of remarks are in order. On the one hand, note that the global pairing \eqref{eq:explicitPairI1} yields a map
\[
\begin{aligned}
(\Sigma_{X}^*)^{\perp} & \longrightarrow \operatorname{Hom}_{\text{gr}}(\I_X^1, k^*) 
\\
\alpha & \longmapsto \quad \beta \mapsto \langle \alpha, \beta \rangle
\end{aligned}
\]
which, by Weil reciprocity~(\eqref{E:WeilRec} and \eqref{E:SigmaPPerp}), factors as
\[
\begin{aligned}
\Phi\,\colon\, (\Sigma_{X}^*)^{\pperp} & \longrightarrow \operatorname{Hom}_{\text{gr}}(\bar\I_X^1/\Sigma_{X}^*, k^*) 
\\
\alpha & \longmapsto \quad \beta \mapsto \langle \alpha, \beta \rangle.
\end{aligned}
\]
On the other hand, the global pairing \eqref{eq:explicitPairI1} also induces a map
	\begin{equation}\label{E:PairingPic}
\begin{aligned}
\Div^0(X) \times \bar V_{0} & \longrightarrow \, k^* \\
\big( \sum_{x \in X} n_x x \, , \alpha \big) & \longmapsto \, \prod_{x \in X} N_{k(x)/k}(\alpha(x))^{n_x}
\end{aligned}
\end{equation}
(recall the definition of the open neighborhood $\bar V_{0}\subset \bar\I_X^1$ from Theorem~\ref{thm:barI-T2totdis}).

Before stating the results, we make a small digression.  Observe that the sequence
\[
	0\to \bar V_{0}/k^* \to \bar\I_X^1/\Sigma_X^* \to \bar\I_X^1/\bar V_{0}\cdot \Sigma_X^* \to 0 
\]
is exact. We will use the following well-known identifications
\[
	\I_X^1 /  V_{0} \simeq \bar\I_X^1 /  \bar V_{0} \simeq \Div^0(X) 
	\, , \qquad 
	\I_X^1 / \Sigma_{X}^* V_{0} \simeq \bar\I_X^1 /  \Sigma_{X}^* \bar V_{0} \simeq \Pic^0(X)\, . 
\]
Note that $(\Sigma_{X}^{*})^{\perp} \cap \A_{X}^+ =
(\Sigma_{X}^{*})^{\perp}\cap  V_{0} =  \ker\big((\Sigma_{X}^{*})^{\perp}\to \Div^0(X)\big)$, where the morphism $(\Sigma_{X}^{*})^{\perp}\to \Div^0(X)$ is the restriction of \eqref{eq:IXaDivisor}.
It is now straightforward to check that the following diagram is commutative:
\[
\xymatrix@C=14pt{
	0 \ar[r] &  	(\Sigma_{X}^{*})^{\perp} \cap \A_{X}^+  \ar[r] \ar@{-->}[d] &   
	(\Sigma_{X}^{*})^{\perp} 
	\ar[r]  \ar[d]^{\Phi} &   
	 \Div^0(X) \ar[d]^{\Psi}
	\\
	0  \ar[r] &   \hom_{\text{gr}}\big(\Pic^0(X) ,k^{*}\big)  \ar[r] & 
	\hom_{\text{gr}}\big( \bar\I_X^1/ \Sigma_X^* ,k^{*}\big)  \ar[r] &
	\hom_{\text{gr}}\big(\bar V_{0}/ k^* ,k^{*}\big),  
}\]
where $\Psi$ is induced by \eqref{E:PairingPic} and the dashed arrow on the left-hand side is induced by $\Phi$. The map we want to study is related to this diagram as follows:
\[
\xymatrix{
	(\Sigma_{X}^{*})^{\perp} 
\ar[r]  \ar[d] &   
\Div^0(X) \ar[d]
\\
	(\Sigma_{X}^{*})^{\pperp} / \Sigma_{X}^{*}   
\ar[r]  &   \Pic^0(X).
}
\]
The notation $\hom_{\text{gr}}$ denotes homomorphisms of groups, ignoring any previous topological structure (equivalently, endowing all groups with the discrete topology).

\begin{prop}
\label{prop:ExSeqOrto}
There are maps $\iota,\pi$ such that the following sequence is exact:
\[
	0 \to \prod_{x\in X}(1+\m_{x}) N_{x}  \overset{\iota}\to  
	(\Sigma_{X}^{*})^{\perp} \cap \A_{X}^+ \overset{\pi}{\to}
	\hom_{\mathrm{gr}}\big(\Pic(X),k^{*}\big). 
\]
Furthermore, if $k$ is either finite or algebraically closed, the map $\pi$ is surjective. 
\end{prop}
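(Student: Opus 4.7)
The plan is as follows. I take $\iota$ to be the natural inclusion; Theorem~\ref{T:rad1} gives $\prod_{x}(1+\m_{x})N_{x}\subseteq R^{1}\subseteq(\Sigma_{X}^{*})^{\perp}$, and this subgroup clearly sits inside $V_{0}\subseteq\A_{X}^{+}$, so $\iota$ lands in the intended target and is visibly injective. To construct $\pi$, the key observation is that for $\alpha\in V_{0}$ one has $v_{x}(\alpha_{x})=0$ for every $x$, so the sign in~\eqref{E:tameSymbol} disappears, the local symbol collapses to $\localsymbol{\alpha_{x},\beta_{x}}_{x}=\norm{\alpha(x)}^{v_{x}(\beta_{x})}$, and hence for every $\beta\in\I_{X}$,
\[
    \globalsymbol{\alpha,\beta}=\prod_{x\in X}\norm{\alpha(x)}^{v_{x}(\beta_{x})},
\]
which depends on $\beta$ only through its divisor $D(\beta)$. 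I would then define
\[
    \pi(\alpha)\Big(\sum_{x}n_{x}x\Big):=\prod_{x\in X}\norm{\alpha(x)}^{n_{x}},
\]
a group homomorphism $\Div(X)\to k^{*}$. Since $\alpha\in(\Sigma_{X}^{*})^{\perp}$, applying this to a principal divisor $D(f)$ gives $\globalsymbol{\alpha,f}=1$, so $\pi(\alpha)$ descends to $\hom_{\mathrm{gr}}(\Pic(X),k^{*})$.

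Exactness at $(\Sigma_{X}^{*})^{\perp}\cap\A_{X}^{+}$ then follows in each direction by inspection. For $\im\iota\subseteq\ker\pi$, every factor $\norm{\alpha(x)}$ equals $1$ when $\alpha(x)\in N_{x}$. Conversely, if $\pi(\alpha)=0$, I would evaluate at the single-point divisor $D=x$ to force $\norm{\alpha(x)}=1$, i.e.\ $\alpha(x)\in N_{x}$; combined with the decomposition $\widehat{\O}_{X,x}^{*}\simeq k(x)^{*}\times(1+\m_{x})$ this shows $\alpha_{x}\in N_{x}(1+\m_{x})$ for every $x$, so $\alpha\in\im\iota$.

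Finally, for surjectivity of $\pi$ when $k$ is finite or algebraically closed, given $\chi\in\hom_{\mathrm{gr}}(\Pic(X),k^{*})$ I would build $\alpha=(\alpha_{x})$ componentwise: for each closed point $x$, pick $\alpha(x)\in k(x)^{*}$ with $\norm{\alpha(x)}=\chi(x)$ and lift it to $\alpha_{x}\in\widehat{\O}_{X,x}^{*}$. This lifting step is precisely where the hypothesis on $k$ is used, and it is the main technical input: for $k$ algebraically closed one has $k(x)=k$ and the norm is the identity, while for $k$ finite the norm $k(x)^{*}\to k^{*}$ is surjective by a standard finite-field argument. Then $\alpha\in V_{0}\subset\A_{X}^{+}$ by construction, the identity $\globalsymbol{\alpha,f}=\prod_{x}\chi(x)^{v_{x}(f)}=\chi(D(f))=1$ for $f\in\Sigma_{X}^{*}$ (since $D(f)$ is principal) ensures $\alpha\in(\Sigma_{X}^{*})^{\perp}$, and $\pi(\alpha)=\chi$ by construction. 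I do not expect any serious obstacle beyond this norm-surjectivity input.
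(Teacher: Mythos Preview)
Your proposal is correct and follows essentially the same route as the paper: you define $\iota$ as the inclusion (the paper cites Theorem~\ref{T:rad} rather than~\ref{T:rad1}, but either works), construct $\pi$ via the formula $\pi(\alpha)(\sum_{x}n_{x}x)=\prod_{x}\norm{\alpha(x)}^{n_{x}}$, and verify exactness and surjectivity by the same arguments---evaluation at single-point divisors for $\ker\pi\subseteq\im\iota$, and norm surjectivity for the construction of a preimage of a given $\chi$. The only cosmetic slip is writing ``$\pi(\alpha)=0$'' where the trivial homomorphism is meant.
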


\begin{proof}
 Recalling the notation~\eqref{E:basisIdelesVD} and Theorem~\ref{T:rad}, we have $\prod_{x\in X}(1+\m_{x}) N_{x} \subseteq   \A_{X}^+ \cap (\Sigma_{X}^{*})^{\perp} = \ker\big((\Sigma_{X}^{*})^{\perp}\to \Div(X)\big)$, so $\iota$ is the inclusion.

$\pi$ is defined as follows. Given an idele $\lambda$, we consider the following map from $\Div(X)$ to $k^{*}$:
\begin{equation}
\label{E:DivTok}
	D=\sum_{x \in X} n_{x} x \longmapsto \lambda(D):=\prod_{x\in X} \norm{\lambda(x)}^{n_{x}}.	
\end{equation}
We check that $\lambda(D(f))=1$ for $\lambda\in \ker\big((\Sigma_{X}^{*})^{\perp}\to \Div(X)\big)$ and $f\in\Sigma_{X}^{*}$. Indeed, since $v_{x}(\lambda_{x})=0$ for all $x$, one has
\[
	\begin{split}
	   1
	&= \globalsymbol{\lambda,f}
	 = \prod_{x} (-1)^{\deg(x)v_{x}(\lambda_{x})v_{x}(f)}    
	   \norm*{\frac{\lambda^{v_{x}(f)}}{f^{v_{x}(\lambda_{x})}}(x)}
	\\
	&= \prod_{x\in X} \norm{\lambda(x)}^{v_{x}(f)}
	 = \lambda(D(f)),
	\end{split}
\] 	
and accordingly, the morphism $\lambda:\Div(X)\to k^{*}$ induces a map $\Pic(X):=\Div(X)/\Sigma_{X}^{*}\to k^{*}$. This map is $\pi(\lambda)$. 

Let us now prove the exactness. Clearly, $\iota$ is injective. To check that $\pi(\iota(\lambda))=1$, it suffices to observe that for $\lambda\in \prod_{x\in X}(1+\m_{x}) N_{x}$, we have $N_{k(x)/x}(\lambda(x)) = 1$ for all $x \in X$.
It remains to check that $\ker\pi\subseteq \im\iota$. Let $\lambda\in\ker(\pi)$.  First, $\lambda\in \ker\big((\Sigma_{X}^{*})^{\perp}\to \Div(X)\big) \subseteq V_{0}$, therefore $v_{x}(\lambda_{x})=0$ for all $x\in X$. On the other hand, for the divisor $D_{x}:=x \in\Div(X)$, the fact that $\lambda\in\ker(\pi)$ yields
\[
	1= \lambda(D_{x})  = \norm{\lambda(x)}.
\]
Now $v_{x}(\lambda_{x})=0$ and $\lambda(x) \in N_{x}$ imply that $\lambda\in \prod_{x\in X}(1+\m_{x}) N_{x}$.

Finally, we show that $\pi$ is surjective if $k$ is either finite or algebraically closed. To a given group homomorphism $\phi:\Pic(X)\to k^{*}$, we associate the idele $\lambda^{\phi}$ defined by:
\begin{itemize}
	
	\item $v_{x}(\lambda^{\phi})=0$ for all $x$;
	
	\item $\lambda^{\phi}_{x} \in \widehat{\O}_{X,x}^{*}$, such that $\norm{\lambda^{\phi}(x)} = \phi(D_{x})$.
	
\end{itemize}
The existence of such a $\lambda^{\phi}$ is guaranteed either when $k$ is finite (by surjectivity of the norm) or when $k$ is algebraically closed.

We need to show that $\lambda^{\phi}\in \ker\big((\Sigma_{X}^{*})^{\perp}\to \Div(X)\big)$. Trivially, $D(\lambda^{\phi})=0$, so it suffices to verify that $\lambda^{\phi}\in (\Sigma_{X}^{*})^{\perp}$. For $f\in\Sigma_{X}^{*}$, an easy computation yields
\begin{align*}
	   \globalsymbol{\lambda^{\phi},f}
	&= \prod_{x} (-1)^{\deg(x)v_{x}(\lambda^{\phi})v_{x}(f)}
	   \norm*{\frac{(\lambda^{\phi})^{v_{x}(f)}}{f^{v_{x}(\lambda^{\phi})}}(x)}
	\\
	&= \prod_{x\in X} \norm{\lambda^{\phi}(x)}^{v_{x}(f)}
	 = \phi\big(\sum_{x\in X} v_{x}(f) x\big) = \phi(D(f)) = 1.
	\qedhere
\end{align*}
\end{proof}

\begin{cor}
\label{cor:KerSigmaPerpHomPic}
There is an exact sequence
\[
	0 \to k^{*} \prod_{x\in X} N_{x}  \to  
	\big((\Sigma_{X}^{*})^{\pperp} \cap \bar V_{0} \big)   \overset{\bar\pi} \to 
	\hom_{\text{gr}}\big(\Pic^0(X),k^{*}\big)  . 
\]
Furthermore, if $k$ is either finite or algebraically closed, 
the map $\bar{\pi}$ is surjective.
\end{cor}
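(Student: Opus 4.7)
The plan is to derive this corollary as a quotient/restriction of the exact sequence in Proposition~\ref{prop:ExSeqOrto}. First I would note that $(\Sigma_{X}^{*})^{\perp} \cap \A_{X}^{+} = (\Sigma_{X}^{*})^{\perp} \cap V_{0}$: any $\lambda \in \I_{X}^{1} \cap \A_{X}^{+}$ has only nonnegative valuations whose weighted sum is zero, hence $v_{x}(\lambda_{x})=0$ for every $x$. Under the projection $p:\I_{X}^{1} \to \bar{\I}_{X}^{1}$, this group maps onto $(\Sigma_{X}^{*})^{\pperp} \cap \bar V_{0}$; here I use Proposition~\ref{P:propsPerp}(3) together with the fact that $\prod_{x}(1+\m_{x})$ is contained in both $(\Sigma_{X}^{*})^{\perp}$ and $V_{0}$, so that the image of the intersection equals the intersection of the images.

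Next, since $\pi$ from Proposition~\ref{prop:ExSeqOrto} is trivial on $\prod_{x}(1+\m_{x})$ (two ideles congruent mod $(1+\m_{x})$ at every $x$ evaluate to the same element of $k(x)$, and hence define the same map $\Div(X) \to k^{*}$), it descends to a homomorphism $(\Sigma_{X}^{*})^{\pperp} \cap \bar V_{0} \to \hom_{\text{gr}}(\Pic(X),k^{*})$, which I then compose with the restriction along $\Pic^{0}(X)\hookrightarrow\Pic(X)$ to define $\bar\pi$. The main step is the kernel computation. The degree sequence $0 \to \Pic^{0}(X) \to \Pic(X) \xrightarrow{\deg} \Z \to 0$ splits via a rational point of $X$, so
\[
	\ker\bigl(\hom_{\text{gr}}(\Pic(X),k^{*}) \to \hom_{\text{gr}}(\Pic^{0}(X),k^{*})\bigr) \,\simeq\, \hom_{\text{gr}}(\Z,k^{*}) \,=\, k^{*},
\]
with $c \in k^{*}$ corresponding to the character $D \mapsto c^{\deg D}$. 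A direct computation shows that for the constant idele $c \in k^{*} \subseteq V_{0} \cap (\Sigma_{X}^{*})^{\perp}$ one has $\pi(c)(D) = \prod_{x}\norm{c}^{n_x} = c^{\deg D}$. Combining this with $\ker \pi = \prod_{x}(1+\m_{x}) N_{x}$ from Proposition~\ref{prop:ExSeqOrto}, I conclude that the kernel of $\pi$ followed by restriction is $k^{*} \cdot \prod_{x}(1+\m_{x}) N_{x}$, which projects under $p$ to $k^{*} \prod_{x} N_{x}$, as required.

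Finally, surjectivity when $k$ is finite or algebraically closed follows by stacking two surjections: $\pi$ is surjective onto $\hom_{\text{gr}}(\Pic(X),k^{*})$ by Proposition~\ref{prop:ExSeqOrto}, and the restriction $\hom_{\text{gr}}(\Pic(X),k^{*}) \to \hom_{\text{gr}}(\Pic^{0}(X),k^{*})$ is surjective because $\Pic(X)/\Pic^{0}(X) \simeq \Z$ is free (any homomorphism on $\Pic^{0}(X)$ extends by prescribing an arbitrary image for the class of a degree-$1$ divisor). I expect the only real subtlety to be in the kernel calculation---specifically, recognizing that the extra $k^{*}$ factor (absent from the analogous sequence in Proposition~\ref{prop:ExSeqOrto}) enters precisely because the splitting of the degree sequence is realized inside $(\Sigma_{X}^{*})^{\perp}$ by constant ideles.
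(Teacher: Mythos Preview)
Your argument is correct and follows essentially the same route as the paper: reduce to Proposition~\ref{prop:ExSeqOrto}, pass to the quotient by $\prod_{x}(1+\m_{x})$, and then compose with the restriction map $\hom_{\text{gr}}(\Pic(X),k^{*})\to\hom_{\text{gr}}(\Pic^{0}(X),k^{*})$ coming from the degree splitting afforded by a rational point. The only organizational difference is that the paper first quotients to obtain $0\to\prod_{x}N_{x}\to(\Sigma_{X}^{*})^{\pperp}\cap\bar V_{0}\to\hom_{\text{gr}}(\Pic(X),k^{*})$ and then composes with restriction, whereas you compute the kernel of the composite upstairs and project at the end; your explicit verification that the constant idele $c$ hits the character $D\mapsto c^{\deg D}$ is exactly the step that identifies the extra $k^{*}$ factor, and your surjectivity argument (stacking the surjectivity of $\pi$ from Proposition~\ref{prop:ExSeqOrto} with surjectivity of restriction) is a clean alternative to the paper's instruction to rerun the $\lambda^{\phi}$ construction.
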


\begin{proof}
Recall that $(\Sigma_{X}^{*})^{\perp} \subset\I_{X}^{1}$ since $k^{*}\subset \Sigma_{X}^{*}$, and thus the map $(\Sigma_{X}^{*})^{\perp}\to \Div(X)$ takes values in $\Div^0(X)$, with $(\Sigma_{X}^{*})^{\perp} \cap \A_{X}^+ = \ker\big((\Sigma_{X}^{*})^{\perp}\to \Div^0(X)\big)$. Accordingly, we have
\[
	(\Sigma_{X}^{*})^{\pperp} \cap \bar V_{0} \,=\, \ker\big((\Sigma_{X}^{*})^{\pperp}\to \Div^0(X)\big).
\]
Recalling Theorem~\ref{T:rad}, equation \eqref{E:SigmaPPerp} and Proposition~\ref{prop:ExSeqOrto}, one obtains the exact sequence
\[
	0 \to  \prod_{x\in X} N_{x}  \to  
	(\Sigma_{X}^{*})^{\pperp} \cap \bar V_{0}   \overset{\pi} \longrightarrow 
	\hom_{\text{gr}}\big(\Pic(X),k^{*}\big).
\]
The inclusion $\Pic^0(X)\subset \Pic(X)$ yields a restriction map $\rho$ which fits into the exact sequence
\[
	0 \to k^* \to \hom_{\text{gr}}\big(\Pic(X),k^{*}\big) \to \hom_{\text{gr}}\big(\Pic^0(X),k^{*}\big) \to 0,
\]
where $a\in k^*$ is mapped to the group homomorphism  $\Pic(X) \to k^{*}$ by defining $\alpha \mapsto a^{\deg D(\alpha)}$ (see \eqref{eq:IXaDivisor}).  The surjectivity of the restriction map on the right-hand side of the previous sequence follows from the existence of a rational point. 

The surjectivity of the sequence in the statement is proven as in Proposition~\ref{prop:ExSeqOrto}.
\end{proof}

\begin{thm}
\label{thm:EXACTSEQSigma}
Let $k$ be either finite or algebraically closed. There is an exact sequence
\begin{equation}
\label{E:EXACTSEQSigma}
	0 \to \hom_{\text{gr}}\big(\Pic^0(X),k^{*}\big)  \to  
	(\Sigma_{X}^{*})^{\pperp}/\big( \Sigma_{X}^{*} \prod_{x\in X} N_{x}\big)  \to  
	\Pic^0(X).	
\end{equation}
Moreover, for $k=\C$, the last arrow is surjective.
\end{thm}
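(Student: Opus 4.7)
\emph{Overview.} The plan is to deduce the exact sequence from Corollary~\ref{cor:KerSigmaPerpHomPic} by carefully identifying the kernel of the natural map to $\Pic^{0}(X)$, and to establish the surjectivity statement over $\C$ by an explicit construction exploiting the fact that $\C^{*}$ is divisible, hence injective as a $\Z$-module.

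\emph{Exactness.} Abbreviate $H:=(\Sigma_{X}^{*})^{\pperp}$ and $K:=\Sigma_{X}^{*}\prod_{x\in X}N_{x}$; note that $K\subseteq H$ by Weil reciprocity together with Theorem~\ref{T:rad}. The arrow $H/K\to \Pic^{0}(X)$ is induced by the divisor map \eqref{eq:IXaDivisor} and is well defined since $\Sigma_{X}^{*}$ maps to principal divisors while $\prod_{x}N_{x}\subseteq \bar V_{0}$ maps to zero. Its kernel consists of classes of $\alpha\in H \cap \Sigma_{X}^{*}\bar V_{0}$. Writing any such $\alpha$ as $f\beta$ with $f\in\Sigma_{X}^{*}$ and $\beta\in\bar V_{0}$, Weil reciprocity ($\Sigma_{X}^{*}\subseteq H$) forces $\beta=f^{-1}\alpha\in H\cap \bar V_{0}$, so $H\cap \Sigma_{X}^{*}\bar V_{0}=\Sigma_{X}^{*}\cdot(H\cap \bar V_{0})$. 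Since $\Sigma_{X}^{*}\cap \bar V_{0}=k^{*}$ (as $k$ is algebraically closed in $\Sigma_{X}$) and $\prod_{x}N_{x}\subseteq \bar V_{0}$, a short calculation yields $(H\cap \bar V_{0})\cap K=k^{*}\prod_{x}N_{x}$. The isomorphism theorems then identify the kernel as $(H\cap \bar V_{0})/(k^{*}\prod_{x}N_{x})$, which by Corollary~\ref{cor:KerSigmaPerpHomPic} is isomorphic to $\hom_{\mathrm{gr}}(\Pic^{0}(X),k^{*})$.

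\emph{Surjectivity over $\C$.} Given $D=\sum n_{x}x\in \Div^{0}(X)$, define the idele $\alpha^{D}$ by $\alpha^{D}_{x}=t_{x}^{n_{x}}$ for $x\in\supp D$ and $\alpha^{D}_{x}=1$ otherwise; evidently $D(\alpha^{D})=D$. Computing through \eqref{eq:explicitPairI1} (with $\deg(x)=1$ and $N_{k(x)/k}$ trivial since $k(x)=\C$), the map $f\mapsto \globalsymbol{\alpha^{D},f}$ is a group homomorphism $\Sigma_{X}^{*}\to\C^{*}$ which, thanks to $\deg D=0$, vanishes on $k^{*}$ and hence descends to a character $\bar\chi_{D}\colon\Div_{\mathrm{princ}}(X)\to\C^{*}$. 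Divisibility of $\C^{*}$ makes it injective as a $\Z$-module, so $\bar\chi_{D}$ extends to some $\tilde\chi\colon\Div(X)\to\C^{*}$. Define $\beta\in\bar V_{0}$ by taking $\beta_{x}$ to be any lift of $\tilde\chi(x)\in k(x)^{*}=\C^{*}$. Then for every $f\in\Sigma_{X}^{*}$,
\[
\globalsymbol{\beta,f}=\prod_{x}\tilde\chi(x)^{v_{x}(f)}=\tilde\chi(D(f))=\bar\chi_{D}(D(f))=\globalsymbol{\alpha^{D},f},
\]
so $\alpha:=\alpha^{D}\beta^{-1}$ lies in $H$ and maps to $[D]\in\Pic^{0}(X)$.

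\emph{Main obstacle.} The subtlest step will be verifying that the pairing $\globalsymbol{\alpha^{D},f}$, which a priori depends on the residues of the leading coefficients $u_{x}$ of $f$ at points of $\supp D$, descends to a character of $\Div_{\mathrm{princ}}(X)$: it is precisely the condition $\deg D=0$ that kills the $k^{*}$-scaling ambiguity in the choice of $f$ representing a given principal divisor. Once this descent is secured, the remainder of the argument is formal; the key structural input is the injectivity of $\C^{*}$ as a $\Z$-module, which explains why the surjectivity is stated for $\C$ (although the same argument works over any algebraically closed base field, and clearly breaks down for finite $k$ where $k^{*}$ is not divisible).
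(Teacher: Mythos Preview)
Your exactness argument is essentially the paper's: both identify the kernel of the map to $\Pic^{0}(X)$ with $(\Sigma_{X}^{*})^{\pperp}\cap\bar V_{0}$ modulo $k^{*}\prod_{x}N_{x}$ and then invoke Corollary~\ref{cor:KerSigmaPerpHomPic}. Your isomorphism-theorem bookkeeping is a bit more explicit than the paper's snake-type diagram chase, but the content is the same.

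Your surjectivity argument, however, is genuinely different and considerably more efficient. The paper proceeds analytically: for a compact Riemann surface it writes every meromorphic function as a product of prime forms $\E(z,z')$ (i.e.\ $(z-z')$, the Weierstrass $\sigma$-function, or Fay's prime form according to the genus), defines for each pair $a,b$ an idele $\alpha_{ab}$ from germs of $\E(\tilde z,s(a))/\E(\tilde z,s(b))$, and then verifies $\globalsymbol{f,\alpha_{ab}}=1$ by a direct, case-by-case cancellation of signs. By contrast, you bypass all the complex analysis: once $f\mapsto\globalsymbol{\alpha^{D},f}$ descends to $\Div_{\mathrm{princ}}(X)$ (exactly the point you flag, and indeed forced by $\deg D=0$), the injectivity of the divisible group $k^{*}$ as a $\Z$-module produces the correcting unit idele $\beta$ for free. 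This not only shortens the proof dramatically but, as you observe, extends the surjectivity to every algebraically closed base field rather than just $\C$; the paper's transcendental construction gives no such bonus. What the paper's approach does buy is an \emph{explicit} preimage of $[a-b]$ in $(\Sigma_{X}^{*})^{\pperp}$ built from classical special functions, whereas your $\tilde\chi$ (and hence $\beta$) comes from an abstract extension and carries no canonical description.
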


\begin{proof}
Using the map~\eqref{eq:IXaDivisor} $(\Sigma_{X}^{*})^{\perp}\to \Div(X)$ that sends an idele $\lambda$ to its associated divisor, and the exact cohomology sequence of
\[
	0\to  \O_{X}^{*} \to  \Sigma_{X}^{*}\to  \Sigma_{X}^{*}/\O_{X}^{*}\to  0,
\]
one obtains the commutative diagram
\[
	\xymatrix{
	0 \ar[r] & \Sigma_{X}^{*} \ar[r] \ar@{->>}[d] & (\Sigma_{X}^{*})^{\perp} \ar[r] \ar[d] & (\Sigma_{X}^{*})^{\perp}/\Sigma_{X}^{*} \ar[r]\ar[d] & 0
	\\
	0 \ar[r] & \Sigma_{X}^{*}/k^{*} \ar[r] & \Div(X) \ar[r] & \Pic(X) \ar[r] & 0   .
	}
\] 
This diagram induces the following one
\[
	\xymatrix{
	0 \ar[r] & \Sigma_{X}^{*} \ar[r] \ar@{->>}[d] & (\Sigma_{X}^{*})^{\pperp} \ar[r] \ar[d] & (\Sigma_{X}^{*})^{\pperp}/\Sigma_{X}^{*} \ar[r]\ar[d] & 0
	\\
	0 \ar[r] & \Sigma_{X}^{*}/k^{*} \ar[r] & \Div^0(X) \ar[r] & \Pic^0(X) \ar[r] & 0
	 . }
\] 
Looking at the kernels of the vertical arrows, one gets
\[
	0\to k^{*} \to  \ker\big((\Sigma_{X}^{*})^{\pperp}\to \Div^0(X)\big)    \to    
	(\Sigma_{X}^{*})^{\pperp}/\Sigma_{X}^{*} \to  \Pic^0(X),
\]
and the conclusion follows by Corollary~\ref{cor:KerSigmaPerpHomPic}. 

Now consider $k = \C$ and let us prove the surjectivity. Let $X$ be a compact Riemann surface of genus $g$ and $\Sigma$ its field of meromorphic functions. Given a meromorphic function $f \in \Sigma^{*}$ with divisor $(f) = \sum_{i} n_{i} a_{i} - \sum_{j} m_{j} b_{j}$, where $n_{i},m_{j}$ are non-negative integers and $a_{i}, b_{j} \in X$ are pairwise distinct points, it is well-known that there exists $\lambda\in {\mathbb C}^{*}$ such that
\[
	  f(z)
	= \lambda\cdot \frac{\prod_{i} \E(z,a_{i})^{n_{i}}}{\prod_{j} \E(z,b_{j})^{m_{j}}}
	\quad \text{ where }
	\E(z, z')= 
	\begin{cases}
	(z-z')       & \text{ if } g=0, 
	             \\
	\sigma(z-z') & \text{ if } g=1,
	             \\
	E(z,z')      & \text{ if } g=2.
	\end{cases}
\]
Here $\sigma$ is Weierstrass' sigma function and $E$ is the prime form (see \cite{Fay}).
	
Let $\tilde X$ be the universal cover of $X$. By Riemann's uniformization theorem,
\[
	\tilde X= 
	\begin{cases}
	{\mathbb{P}}_{\mathbb C}^{1}, & \text{the Riemann sphere, if $g=0$,}
	                             \\ 
	{\mathbb C},                 & \text{the complex plane, if $g=1$,}
	                             \\
	{\mathbb D},                 & \text{the open unit disk, if $g \geq 2$.}
	\end{cases}
\]
Recall that a meromorphic section of a line bundle on $X$ is a meromorphic function  on $\tilde X$ such that its transformation along the homology cycles of $X$ are given by the automorphic factors of the line bundle. Thus $\E(\tilde z,\tilde z')$ is a bimeromorphic function on $\tilde X \times \tilde X$ satisfying
\begin{equation}
\label{e:Eimpar}
\E(\tilde z,\tilde z')  = - \E(\tilde z',\tilde z).
\end{equation}
Let us fix a set-theoretic section $s$:
\[
	\xymatrix{\tilde X \ar[r]^{\pi} & X \ar@/^/[l]^{s}}.
\]
Given a pair of distinct points $a,b\in X$, we define an idele $\alpha_{ab}$ by
\[
	\alpha_{ab} = ((\alpha_{ab})_{x})_{x \in X},
	\text{ where }
	(\alpha_{ab})_{x}
	\text{ is the germ of }
	\frac{\E(\tilde z, s(a))}{\E(\tilde z, s(b))}\text{ at $s(x)\in \tilde X$}.
\]	
Observe that the divisor of this idele is $a-b\in \operatorname{Pic}^0(X)$, and these are generators. We now show that
\[
	\alpha_{ab} \in (\Sigma^{*})^{\perp},
	\text{ i.e., }
	\globalsymbol{f,\alpha_{ab}} = 1 \quad \forall f\in \Sigma^{*}.
\]
Since $\globalsymbol{}$ is multiplicative, it suffices to consider the following two cases.

\noindent \textbf{Case 1:} $\supp (f)\cap \supp (\alpha_{ab}) = \emptyset$. Then
\[
	\globalsymbol{f,\alpha_{ab}} = \prod_{i} \alpha_{ab}(a_{i})^{-n_{i}} \prod_{j} \alpha_{ab}(b_{j})^{m_{j}} f(a) f(b)^{-1}.
\]
Since $f(a),f(b)$ are meromorphic functions on $X$ and thus their values do not depend on the choice of elements on the fibers of $\pi$, the above equation can be expressed as
\[
	\Big(\frac{\E(s(a_{i}), s(a))}{\E(s(a_{i}), s(b))}\Big)^{-n_{i}}
	\Big(\frac{\E(s(b_{j}), s(a))}{\E(s(b_{j}), s(b))}\Big)^{m_{j}}
	\Big(\lambda\cdot \frac{\prod_{i} \E(a,a_{i})^{n_{i}}}{\prod_{j} \E(a,b_{j})^{m_{j}}}\Big)
	\Big(\lambda\cdot \frac{\prod_{i} \E(b,a_{i})^{n_{i}}}{\prod_{j} \E(b,b_{j})^{m_{j}}}\Big)^{-1}.
\]
Recalling~\eqref{e:Eimpar}, this equals
\[
	(-1)^{2(\sum_{i} n_{i} + \sum_{j} m_{j})}  = 1.
\]

\noindent \textbf{Case 2:} $\supp (f)\cap \supp (\alpha_{ab}) = \{a\}$. Set $a_{1}=a$ and thus $v_{a}(f)=n_{1}$. Then
\begin{align*}
	   \globalsymbol{f,\alpha_{ab}}
	&= (-1)^{n_{1}} 
	   \cdot \Big(\frac{f}{\alpha_{ab}^{n_{1}}}\Big)(a)
	   \cdot \frac{1}{f(b)}
	   \cdot \prod_{i\neq 1} \frac{1}{\alpha_{ab}(a_{i})^{n_{i}}}
	   \cdot \prod_{j} {\alpha_{ab}(b_{j})^{m_{j}}}
	\\
	&= (-1)^{n_{1}} \left(\frac{\lambda \frac{\prod_{i} \E(z,a_{i})^{n_{i}}}{\prod_{j} \E(z,b_{j})^{m_{j}}}}{\big(\frac{\E(\tilde z,s(a))}{\E(\tilde z,s(b))}\big)^{n_{1}}}\right)(a)
	\cdot \Big( \lambda \frac{\prod_{i} \E(b,a_{i})^{n_{i}}}{\prod_{j} \E(b,b_{j})^{m_{j}}} \Big)^{-1}
	\\
	&\qquad \cdot \prod_{i\neq 1} \Big(\frac{\E(s(a_{i}),s(a))}{\E(s(a_{i}), s(b))}\Big)^{-n_{i}}
	\cdot \prod_{j} \Big(\frac{\E(s(b_{j}),s(a))}{\E(s(b_{j}), s(b))}\Big)^{m_{j}}
	\\
	&= (-1)^{n_{1}} \Big(\frac{\lambda  \prod_{i\neq 1} \E(a,a_{i})^{n_{i}} \E(s(a),s(b))^{n_{1}}}
	{\prod_{j} \E(a,b_{j})^{m_{j}}}\Big)
	\cdot 
	\Big( \frac{\prod_{j} \E(b,b_{j})^{m_{j}}}{ \lambda \prod_{i} \E(b,a_{i})^{n_{i}}}\Big)
	\\ 
	&\qquad \cdot \prod_{i\neq 1} \Big(\frac{\E(s(a_{i}),s(a))}{\E(s(a_{i}), s(b))}\Big)^{-n_{i}} \cdot
	\prod_{j} \Big(\frac{\E(s(b_{j}),s(a))}{\E(s(b_{j}), s(b))}\Big)^{m_{j}}
	\\
	&= (-1)^{n_{1}+\sum_{j} m_{j} +\sum_{i\neq 1} n_{i} +\sum_{j} m_{j} +\sum_{i} n_{i}  }   = 1.
\qedhere	
\end{align*}
\end{proof}

\begin{cor}
If $k={\mathbb C}$ and $X$ is an elliptic curve, then
\[
	(\Sigma_{X}^{*})^{\pperp}/\Sigma_{X}^{*} \neq  0.
\]
\end{cor}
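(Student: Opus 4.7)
The plan is to deduce this as a direct consequence of Theorem~\ref{thm:EXACTSEQSigma} applied in the case $k=\mathbb{C}$. The essential observation is that an elliptic curve has a non-trivial Picard variety of degree zero, and the surjectivity of the right-hand map in the sequence~\eqref{E:EXACTSEQSigma} is already established for $k=\mathbb{C}$.

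First I would unpack the exact sequence for $k=\mathbb{C}$. Since $\mathbb{C}$ is algebraically closed, every closed point $x\in X$ is rational, so $k(x)=\mathbb{C}$ and the norm map $N_{k(x)/k}$ is the identity. Consequently $N_x=\{1\}$ for every $x$, so $\prod_{x\in X}N_x$ is trivial in $\bar\I_X^1$ and the denominator $\Sigma_X^{*}\prod_{x\in X}N_x$ in~\eqref{E:EXACTSEQSigma} reduces to $\Sigma_X^{*}$. By Theorem~\ref{thm:EXACTSEQSigma}, and since the last arrow is surjective for $k=\mathbb{C}$, the sequence becomes
\[
0 \to \hom_{\text{gr}}\bigl(\Pic^0(X),\mathbb{C}^{*}\bigr) \to (\Sigma_X^{*})^{\pperp}/\Sigma_X^{*} \to \Pic^0(X) \to 0 .
\]

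Next I would invoke the well-known fact that for an elliptic curve $X$ over $\mathbb{C}$, the Abel-Jacobi map gives an isomorphism $\Pic^0(X)\simeq X(\mathbb{C})\simeq \mathbb{C}/\Lambda$, which is in particular a non-trivial abelian group. Because $(\Sigma_X^{*})^{\pperp}/\Sigma_X^{*}$ surjects onto $\Pic^0(X)\neq 0$, it must itself be non-zero, which is the conclusion.

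There is really no main obstacle here, since all the serious work has been carried out in Theorem~\ref{thm:EXACTSEQSigma}; the corollary is simply a matter of noting that $\prod_{x}N_x$ collapses when $k$ is algebraically closed and that an elliptic curve has $\Pic^0(X)\neq 0$. As a remark, the kernel $\hom_{\text{gr}}(\Pic^0(X),\mathbb{C}^{*})$ is also very large (since $\mathbb{C}/\Lambda$ admits many group homomorphisms into $\mathbb{C}^{*}$), so the quotient is in fact highly non-trivial, though one need not prove this for the stated corollary.
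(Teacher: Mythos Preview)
Your argument is correct, but it uses a different part of the exact sequence than the paper does. You invoke the \emph{surjectivity} of the right-hand arrow (established in Theorem~\ref{thm:EXACTSEQSigma} for $k=\mathbb{C}$) together with the trivial observation that $\Pic^0(X)\simeq X(\mathbb{C})\neq 0$ for an elliptic curve. The paper instead uses the \emph{injectivity} of the left-hand arrow: it represents $X$ as a Tate curve $\mathbb{C}^{*}/q^{\mathbb{Z}}$ and writes down an explicit non-trivial group homomorphism $\Pic^0(X)\simeq\mathbb{C}^{*}/q^{\mathbb{Z}}\to\mathbb{C}^{*}$, namely $z\mapsto e^{2\pi i\,\log|z|/\log|q|}$, so that $\hom_{\mathrm{gr}}(\Pic^0(X),\mathbb{C}^{*})\neq 0$ embeds non-trivially into $(\Sigma_X^{*})^{\pperp}/\Sigma_X^{*}$.

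Your route is more economical: it avoids the Tate uniformization and the explicit construction entirely, and relies only on facts already proved in Theorem~\ref{thm:EXACTSEQSigma}. The paper's route is more constructive, exhibiting a concrete non-trivial element coming from the kernel side; it also does not use the surjectivity statement for $k=\mathbb{C}$ (whose proof in the paper is rather involved, via the prime form). Either argument suffices for the corollary as stated; your closing remark that $\hom_{\mathrm{gr}}(\Pic^0(X),\mathbb{C}^{*})$ is itself large is essentially the content of the paper's approach.
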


\begin{proof}
Under these hypothesis, we may represent $X$ as a Tate curve; that is, the group of $\C$-valued points of $X$ is isomorphic to ${\mathbb C}^{*}/q^{\mathbb Z}$ for a non-zero complex number with $|q| < 1$. Then the map
\[
	  z \longmapsto e^{2\pi i \frac{\log |z|}{\log|q|}}
    : {\mathbb C}^{*}/q^{\mathbb Z} \to {\mathbb C}^{*} 
\]
yields a non-trivial group homomorphism
\[
	\Pic^0(X)\simeq X \simeq {\mathbb C}^{*}/q^{\mathbb Z}  \to{\mathbb C}^{*}. 
\]
We conclude by Theorem~\ref{thm:EXACTSEQSigma}.
\end{proof}

\begin{rem}
\label{R:structQuot}
If $k$ is algebraically closed, then the norm kernels $N_{x}$ are trivial and the exact sequence~\eqref{E:EXACTSEQSigma} of Theorem~\ref{thm:EXACTSEQSigma} is the sequence we mentioned in the introduction. On the other hand, for finite $k$,~\eqref{E:EXACTSEQSigma} and Corollary~\ref{cor:profinite} completely determine the structure of the quotient $(\Sigma_{X}^{*})^{\pperp} / \Sigma_{X}^{*}$.
\end{rem}

\begin{thm}\label{T:SigmaAlg}	
Let $k$ be algebraically closed and let $\Omega$ be a field with  $\Sigma_{X}^{*}\subset \Omega^{*}\subset (\Sigma_{X}^{*})^{\perp}$. Then:
\begin{enumerate}

\item $\Omega^{*} $ is discrete in $\I_{X}$.

\item The natural map $\Omega^{*}\to  (\Sigma_{X}^{*})^{\pperp}$ is injective and its image is discrete.

\item There is a natural injection
\[
	\Omega^{*}/\Sigma_{X}^{*} \hookrightarrow \Pic^0(X).
\]

\end{enumerate}
\end{thm}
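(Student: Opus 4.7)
For part~(1), the plan is to invoke Theorem~\ref{T:equivalences} directly. Since $k$ is algebraically closed, hence infinite, Proposition~\ref{P:propsPerp}(2) gives $(k^{*})^{\perp}=\I_{X}^{1}$; together with $k^{*}\subset\Sigma_{X}^{*}$ this yields $\Omega^{*}\subset(\Sigma_{X}^{*})^{\perp}\subset\I_{X}^{1}$. This is condition~\eqref{T:equivalences:item:subsetIX1} of Theorem~\ref{T:equivalences}, which is equivalent to~\eqref{T:equivalences:item:discreteIX}, the desired discreteness; the same theorem also gives $\Omega^{*}\cap\A_{X}^{+}=k^{*}$, an identity that will be used throughout.

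For part~(2), I would first dispose of injectivity: the natural map factors through $p\colon\I_{X}^{1}\to\bar\I_{X}^{1}$, and by~\eqref{E:SigmaPPerp} its image lies in $(\Sigma_{X}^{*})^{\pperp}$. The kernel is $\Omega^{*}\cap\prod_{x}(1+\m_{x})\subseteq\Omega^{*}\cap\A_{X}^{+}=k^{*}$, and a constant $c\in k^{*}$ (viewed diagonally) lies in $1+\m_{x}$ only when $c=1$, since $\m_{x}\cap k=0$. For discreteness of the image, the key observation is that the proof of $\eqref{T:equivalences:item:subsetIX1}\Rightarrow\eqref{T:equivalences:item:discreteIX}$ in Theorem~\ref{T:equivalences} actually shows that $\Omega^{*}\cap V_{D'}=\{1\}$ for \emph{every} nonzero effective divisor $D'$: a nontrivial $\omega\in\Omega^{*}\cap V_{D'}$ would make $1-\omega\in\Omega^{*}\subset\I_{X}^{1}$ have zeros on $\supp D'$ and no poles, contradicting $\deg D(1-\omega)=0$. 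The plan is to combine this with the elementary identity $V_{D}\cdot\prod_{x}(1+\m_{x})=V_{D_{\mathrm{red}}}$, where $D_{\mathrm{red}}=\sum_{x\in\supp D}x$: since $p^{-1}(p(V_{D}))=V_{D_{\mathrm{red}}}$, any preimage in $\Omega^{*}$ of an element of $p(V_{D})\cap p(\Omega^{*})$ must lie in $\Omega^{*}\cap V_{D_{\mathrm{red}}}=\{1\}$, so $p(V_{D})\cap p(\Omega^{*})=\{\bar 1\}$.

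For part~(3), by part~(1) the divisor map~\eqref{eq:IXaDivisor} restricts to a group homomorphism $\Omega^{*}\to\Div^{0}(X)$, and the plan is to compose with the quotient $\Div^{0}(X)\twoheadrightarrow\Pic^{0}(X)$. That $\Sigma_{X}^{*}$ lies in the kernel is automatic since divisors of rational functions are principal. For the converse, if $D(\omega)=D(f)$ with $f\in\Sigma_{X}^{*}$, then $\omega/f\in\Omega^{*}\cap V_{0}\subset\Omega^{*}\cap\A_{X}^{+}=k^{*}$ by Theorem~\ref{T:equivalences}, and hence $\omega\in k^{*}\cdot f\subset\Sigma_{X}^{*}$.

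The only step that requires genuine work is the discreteness assertion in part~(2); parts~(1) and~(3) are essentially formal consequences of Theorem~\ref{T:equivalences}. The technical point to be careful about in part~(2) is the interaction between the filtration $\{V_{D}\}$ and the subgroup $\prod_{x}(1+\m_{x})$ being quotiented out, encapsulated by the reduction $V_{D}\cdot\prod_{x}(1+\m_{x})=V_{D_{\mathrm{red}}}$; once that is observed, one only needs that a nonzero effective divisor remains nonzero effective after reduction.
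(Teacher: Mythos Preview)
Your proof is correct and follows the paper's approach closely for parts~(1) and~(2): both argue via Theorem~\ref{T:equivalences} that $\Omega^{*}\subset\I_{X}^{1}$ forces discreteness, and both reduce discreteness of the image in $\bar\I_{X}^{1}$ to the observation that a nontrivial $\omega\in\Omega^{*}\cap V_{D'}$ would make $1-\omega$ an element of $\I_{X}^{1}$ with effective nonzero divisor. Your explicit identification $V_{D}\cdot\prod_{x}(1+\m_{x})=V_{D_{\mathrm{red}}}$ makes the quotient argument cleaner than the paper's somewhat terse phrasing.

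For part~(3) your route is slightly more direct than the paper's. The paper first invokes the exact sequence of Theorem~\ref{thm:EXACTSEQSigma} to obtain the map $(\Sigma_{X}^{*})^{\pperp}/\Sigma_{X}^{*}\to\Pic^{0}(X)$ and then, for injectivity on $\Omega^{*}/\Sigma_{X}^{*}$, argues that if $D(\omega)=0$ then $k[\omega]\subseteq\Omega\cap\A_{X}^{+}$ is finite-dimensional, so $\omega$ is algebraic over the algebraically closed field $k$. You instead build the map directly from~\eqref{eq:IXaDivisor} and use the sharper consequence $\Omega^{*}\cap\A_{X}^{+}=k^{*}$ of Theorem~\ref{T:equivalences} to conclude $\omega/f\in k^{*}$ in one step. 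Your argument avoids any appeal to Theorem~\ref{thm:EXACTSEQSigma} and is self-contained from Theorem~\ref{T:equivalences}; the paper's detour through the exact sequence, on the other hand, situates the injection within the broader structural picture of~\S\ref{sec:relation to the picard group}.
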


\begin{proof}
Observe that $\Omega^{*}\subset (\Sigma_{X}^{*})^{\perp}\subset \I_{X}^1$. Thus, by Theorem~\ref{T:equivalences}, $\Omega^{*} $ is discrete. The kernel of the map $\Omega^{*}\to  (\Sigma_{X}^{*})^{\pperp}$ is $\Omega^{*}\cap \prod(1+\m_x)$, which is equal to $\{1\}$ since $\Omega^{*}\cap \A_{X}^+ =k^{*}$ (again by Theorem~\ref{T:equivalences}). Let us check that its image is discrete. By Proposition~\ref{prop:PropertiesSigmaOrt}, it suffices to see that there exists an effective divisor $D$ such that $\Omega^{*} \cap p\bigl(\prod_{x \in \supp(D)} \{1\} \times \prod_{x \notin \supp(D)} k(x)^{*}\bigr) = \{1\}$ in $(\Sigma_{X}^{*})^{\pperp}$. Let $\omega$ be an element in this intersection. Then $\omega - 1$ has zeros in $\supp(D)$ and no poles. Since $\Omega \subseteq (\Sigma_{X}^{*})^{\perp} \subseteq \I_{X}^{1}$ is a field, this implies $\omega = 1$.

Finally, the exact sequence \eqref{E:EXACTSEQSigma} yields a map $\Omega^{*}/\Sigma_{X}^{*} \to \Pic^0(X)$. Let $\omega\in \Omega^{*}$ be an element in the kernel. After replacing $\omega$ by $f\omega$ for a suitable $f\in \Sigma_{X}^{*}$, we may assume that the divisor of $\omega$ is zero. If $\omega\notin k$, then $t-\omega\in \Omega^{*}$ for all $t\in k$. Observing that $k[\omega]\subseteq \Omega\cap \A_{X}^+$ and recalling that $\dim_{k} \Omega\cap\A_{X}^+ <\infty$, it follows that $\omega\in k$ since $k$ is algebraically closed. 
\end{proof}

	



\end{document}